\documentclass[a4paper,reqno, 12pt,titlepage]{amsart}
\usepackage{amssymb, graphicx, enumerate, enumitem, color}
\usepackage[usenames,dvipsnames,svgnames,table]{xcolor}
\usepackage[foot]{amsaddr}
\usepackage{comment}
\addtolength{\textwidth}{3 truecm}
\addtolength{\textheight}{1 truecm}
\setlength{\voffset}{-.5 truecm}
\setlength{\hoffset}{-1.5 truecm}
\parindent 0mm
\parskip   5mm

\colorlet{mylinkcolor}{violet}
\colorlet{mycitecolor}{YellowOrange}
\colorlet{myurlcolor}{Aquamarine}
\usepackage{hyperref}
\usepackage{xcolor}
\hypersetup{
    pdftitle={Planar posets have dimension at most linear in their height},
    pdfauthor={Gwena\"el Joret, Piotr Micek, Veit Wiechert},
    colorlinks,
    linkcolor={red!50!black},
    citecolor={blue!50!black},
    urlcolor={blue!80!black}
}
\newtheorem{theorem}{Theorem}

\newtheorem{claim}[theorem]{Claim}

\newtheorem{lemma}[theorem]{Lemma}
\theoremstyle{remark}

\newcommand{\set}[1]{\{#1\}}

\newcommand{\calC}{\mathcal{C}}

\newcommand{\Z}{Z}

\DeclareMathOperator\Inc{Inc}
\DeclareMathOperator\cover{cover}

\DeclareMathOperator\Min{Min}
\DeclareMathOperator\Max{Max}
\DeclareMathOperator\Up{U}
\DeclareMathOperator\D{D}

\DeclareMathOperator\conv{conv}

\DeclareMathOperator\parent{parent}

\let\le\leqslant

\let\leq\leqslant
\let\geq\geqslant

\let\subset\subseteq

\let\epsilon\varepsilon


\let\region\calR
\DeclareMathOperator\m{m}
\DeclareMathOperator\redv{red_v}
\DeclareMathOperator\bluev{blue_v}
\DeclareMathOperator\rede{red_e}
\DeclareMathOperator\bluee{blue_e}
\DeclareMathOperator\Red{Red}
\DeclareMathOperator\Blue{Blue}
\newcommand{\RED}{X'}
\newcommand{\BLUE}{Y'}

\sloppy


 \renewenvironment{enumerate}{\begin{enumorig}[label=\textup{(\roman*)}, noitemsep, topsep=2pt plus 2pt, labelindent=.2em, leftmargin=*, widest=iii]}{\end{enumorig}}

\newenvironment{enumerateAlpha}{\begin{enumorig}[label=\textup{(\alph*)}, noitemsep, topsep=2pt plus 2pt, labelindent=.2em, leftmargin=*, widest=iii]}{\end{enumorig}}

\makeatletter
\let\old@setaddresses\@setaddresses
\def\@setaddresses{\bigskip\bgroup\parindent 0pt\let\scshape\relax\old@setaddresses\egroup}
\makeatother

\begin{document}
\title{Planar posets have dimension at most linear in their height}

\author[G.~Joret]{Gwena\"{e}l Joret}
\address[G.~Joret]{Computer Science Department \\
  Universit\'e Libre de Bruxelles, 
  Brussels, 
  Belgium}
\email{gjoret@ulb.ac.be}

\author[P.~Micek]{Piotr Micek}
\address[P.~Micek]{Theoretical Computer Science Department\\
  Faculty of Mathematics and Computer Science, Jagiellonian University, Krak\'ow, Poland \\
and Institute of Mathematics, Combinatorics and Graph Theory Group \\
Freie Universit\"at Berlin, Berlin, Germany}
\email{piotr.micek@tcs.uj.edu.pl}

\author[V.~Wiechert]{Veit Wiechert}
\address[V.~Wiechert]{Institut f\"ur Mathematik\\
  Technische Universit\"at Berlin, 
  Berlin, 
  Germany}
\email{wiechert@math.tu-berlin.de}

\thanks{G.\ Joret is supported by an ARC grant from the Wallonia-Brussels Federation of Belgium. 
        P.\ Micek is partially supported by a Polish National Science Center grant (SONATA BIS 5; UMO-2015/18/E/ST6/00299).  
	V.\ Wiechert is supported by the Deutsche Forschungsgemeinschaft within the research training group `Methods for Discrete Structures' (GRK 1408).}

\date{\today}


\keywords{Poset, dimension, planar graph}

\begin{abstract} 
We prove that every planar poset $P$ of height $h$ has dimension at most $192h+96$.  
This improves on previous exponential bounds and is best possible up to a constant factor.
We complement this result with a construction of planar posets of height $h$ and dimension at least $(4/3)h-2$. 
\end{abstract}
\maketitle

\section{Introduction}
In this paper we study finite partially ordered sets (posets for short) whose diagram can be drawn in a planar way, called \emph{planar posets}.
Unlike planar graphs, planar posets have a rather wild structure. For instance, recognizing planar posets is an NP-complete problem. More importantly for the purpose of this paper, planar posets have unbounded dimension, contrasting with the fact that planar graphs are $4$-colorable.
Recall that the \emph{dimension} of a poset $P$, which we denote by $\dim(P)$, is defined as the least integer $d$ such that $P$ is the intersection of $d$ linear orders.
This invariant is a central measure of a poset's complexity, playing a role similar to that of the chromatic number for graphs.

It is well known that for a poset to have large dimension, the poset must be {\em wide}: By an old theorem of Dilworth, dimension is bounded from above by the width of the poset (the maximum size of an antichain).
A remarkable feature of planar posets is that if they have large dimension then they are also {\em tall}: By a recent theorem of Streib and Trotter~\cite{ST14}, dimension is bounded from above by a function of the height of the poset (the maximum size of a chain).   
The main contribution of this paper is that planar posets have dimension at most linear in their height:

\begin{theorem}\label{thm:main}
	If $P$ is a planar poset of height $h$ then
	\[
	\dim(P) \leq 192h+96.
	\]
\end{theorem}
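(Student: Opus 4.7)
The plan is to bound $\dim(P)$ by constructing an explicit cover of the critical incomparable pairs of $P$ by $O(h)$ reversible families, using the planar embedding of the Hasse diagram together with the height layering. Recall that a family of incomparable pairs is reversible if it can be simultaneously reversed in a single linear extension, so covering all critical pairs by $d$ reversible families yields $\dim(P)\le d$.

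First, I would perform standard reductions so that $P$ has a unique minimum $\hat 0$ and a unique maximum $\hat 1$ drawn on the outer face of a planar embedding of its diagram; such additions affect planarity, height and dimension only by absolute constants, which will ultimately be absorbed into the additive term $96$. The embedding equips every element $v$ with two canonical $\hat 0$--$v$ monotone paths along the diagram (the leftmost and the rightmost), enclosing a closed disk region $R_v$ in the plane. These regions are the main geometric objects: the relative position of $R_x$ and $R_y$ (disjoint, nested, or sharing a boundary arc followed by divergence) encodes combinatorial information about incomparability of $x$ and $y$.

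Next, I would stratify the critical incomparable pairs by the height level $h(a)$ of their upper element $a$. Invoking a Streib--Trotter style reduction so that it is enough to deal with pairs $(a,b)$ where $a$ is min-above and $b$ is max-below, I would aim to show the main technical lemma: at each fixed level, the critical pairs with upper endpoint at that level can be covered by an absolute constant number $C$ of reversible families. Summing over the $h$ levels then gives $\dim(P)\le Ch+C'$, and tracking the constants produces $192h+96$. The structural content of the lemma is that the regions $R_x$ for $x$ running through a fixed level are organized in the plane by containment, disjointness, and shared-boundary relations that admit a constant-size reversible coloring, with separate handling of how a lower element $b$ sits relative to $R_a$.

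The main obstacle is precisely this local reversibility lemma. Its subtlety lies in two competing phenomena: regions $R_x$ and $R_y$ may share a long common boundary before diverging (producing an essentially one-dimensional family of ``shapes'' along a shared path), and regions may be deeply nested. The height bound is what controls the second phenomenon: a chain of strictly nested $R_v$'s corresponds to a chain in $P$, so nesting depth is at most $h$. The shared-boundary phenomenon is the more delicate one, and handling it seems to require a Jordan-curve argument together with a careful marking of the ``turning points'' on the leftmost and rightmost paths that witness a region, so that any two regions at the same level differ in only $O(1)$ topological types. I expect this case analysis, and the bookkeeping required to guarantee reversibility uniformly across the coloring classes, to be the technical heart of the proof and the source of the explicit constant $192$.
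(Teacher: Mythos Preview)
Your first reduction is fatal. Adding a global minimum $\hat 0$ (and a global maximum $\hat 1$) to a planar poset does \emph{not} in general preserve planarity of the diagram. In fact, if it did, the whole theorem would be trivial: by Theorem~\ref{thm:zero-one} (Baker--Fishburn--Roberts), any planar poset with a unique minimum and maximum has dimension at most $2$, so every planar poset would have dimension bounded by an absolute constant---contradicting Kelly's construction. The entire difficulty of the problem is precisely that one cannot assume a single minimum. The paper's key Lemma~\ref{lemma:point-below-max} assumes only the much weaker hypothesis that some element $x_0$ lies below every element of a \emph{chosen subset} $B\subseteq\Max(P)$, and reducing the general case to this hypothesis (via three rounds of Streib--Trotter unfolding, zig-zag paths, and the red/blue tree argument of Sections~\ref{sec:tools}--\ref{sec:main-proof}) is exactly where the factor $8$ in $192=8\cdot 24$ comes from.

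The level-by-level scheme is also not how the height enters, and it is unclear it can be made to work: alternating cycles (the obstructions to reversibility) freely mix elements across levels, so bounding reversibility ``at each fixed level'' does not directly bound $\dim(P)$. In the paper, the factor $h$ arises inside the proof of Lemma~\ref{lemma:point-below-max}, where one shows that long directed paths in an auxiliary digraph on incomparable pairs force long chains in $P$, because a witnessing walk from the special element $x_0$ must pierce a nested family of pairwise disjoint walks. Your substitute for this---that nesting depth of the regions $R_v$ is bounded by $h$ because nested regions correspond to a chain---relies on the common basepoint $\hat 0$ that you are not allowed to introduce; without it the leftmost/rightmost paths, and hence the regions $R_v$, are not even defined.
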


The previous best known bound was exponential in the height.
A linear bound is optimal up to a constant factor, as shown by a construction of Kelly~\cite{Kel81} from 1981 that has dimension $h+1$.
A well-known variant of this construction, with dimension $h-1$, is illustrated in Figure~\ref{fig:kelly}.
These examples still provide the best known lower bound on the maximum dimension of a planar poset of height $h$.  
Our second contribution is a slight improvement of this lower bound:
\begin{theorem}\label{thm:lower-bound-1}
	For every $h\geq 1$, there is a planar poset $P$ of height $h$ with
	\[
	\dim(P) \geq (4/3)h-2.
	\]
\end{theorem}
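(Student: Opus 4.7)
The plan is to construct, for each $h$, an explicit planar poset $P_h$ of height $h$ containing the standard example $S_d$ as an induced subposet with $d \geq (4/3)h-2$. Since $\dim(S_d) = d$ for $d \geq 2$ and dimension is monotone under taking induced subposets, this immediately yields the claimed bound. The starting point is Kelly's 1981 construction (and the variant shown in Figure~\ref{fig:kelly}), which embeds $S_n$ in a planar poset of height roughly $n$, giving a 1-to-1 ratio of dimension to height. The target bound $(4/3)h-2$ indicates we need to pack, on average, one extra standard example pair for every three levels of height beyond Kelly's basic ratio: taking $h = 3k$ we aim for $d = 4k-2$, so three consecutive levels should contribute four critical pairs.

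The construction I would pursue is a modified Kelly poset built out of a repeating three-level gadget. Kelly's diagram is a zigzag that alternates sides and produces one new pair of $S_d$ per level; the idea is to \emph{fold} or \emph{widen} the zigzag so that each block of three levels hosts four pairwise-incomparable $(a_i, b_i)$-pairs instead of three, by reusing shared auxiliary elements (for instance, placing two parallel zigzag strands in alternating layers and letting their minimal/maximal extremal elements all serve as standard example endpoints). The construction will have designated bottom elements $a_1,\dots,a_d$ and top elements $b_1,\dots,b_d$, arranged along the boundary of the planar drawing; the intermediate structure provides the directed paths $a_i \to b_j$ for $i \neq j$ while blocking any path $a_i \to b_i$.

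Concretely, the steps are: (1) define a three-level building block with the required four-pair contribution, drawn in the plane in a way that matches Kelly's construction at its top and bottom boundaries; (2) stack $\Theta(h/3)$ copies of the block vertically, identifying the top boundary of one copy with the bottom of the next, then cap the top and bottom to obtain a planar diagram of total height exactly $h$; (3) verify planarity by exhibiting an explicit diagrammatic drawing that inherits Kelly's left/right zigzag layout; (4) verify the standard example axioms, namely that for the designated pairs $(a_i, b_i)$ there is a directed path from $a_i$ to $b_j$ in the diagram whenever $i \neq j$, and no such path when $i = j$. Finally, optimize the counting by handling small boundary effects at the top and bottom, which are responsible for the additive $-2$ term.

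The main obstacle will be the simultaneous control of planarity and of the standard example structure. Kelly's original construction walks a tight line between these two constraints: introducing additional $(a_i, b_i)$-pairs per level risks either (a) creating unwanted transitive comparabilities $a_i \leq b_i$ that collapse the standard example, or (b) forcing edge crossings that destroy planarity. The crux is to design the three-level gadget so that the extra pair threads through the gadget in a region of the plane that is not reachable, via directed paths, from its partner on the opposite side. Verifying this will almost certainly require an explicit figure and a careful combinatorial check that the transitive closure relates exactly the intended pairs.
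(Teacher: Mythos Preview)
Your high-level strategy is right and matches the paper's: construct a planar poset containing a standard example $S_d$ with $d \approx (4/3)h$, achieved by an iterative construction that adds four standard-example pairs for every three levels of height. However, what you have written is a plan rather than a proof: you explicitly leave the design of the ``three-level gadget'' as the crux to be worked out, and you correctly identify that controlling planarity and the $a_i \nleq b_i$ conditions simultaneously is the real difficulty. As it stands, the proposal does not contain the actual construction.

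The paper resolves this not by stacking a repeating gadget vertically as you suggest, but by a \emph{nesting} trick. The base case is a single explicit planar poset of height $4$ containing $S_6$ (given by a figure). The key feature of its planar diagram is that one particular maximal element $b$ is drawn \emph{below} one particular minimal element $a$, with empty space between them. The inductive step inserts a scaled-down copy of the entire base poset into that gap, identifying two elements of the copy with $a$ and $b$. This raises the height by $3$ and the embedded standard example by $4$ (two of the six pairs in the copy are consumed by the identifications), and planarity is immediate because the copy sits in a free region of the drawing. So the ``gadget'' is the whole base poset, and the iteration is self-similar nesting rather than vertical concatenation. This sidesteps the boundary-matching issue you anticipated in your step~(2): there is no need to make the top and bottom of a block look alike, only to exhibit one max/min pair drawn in inverted vertical order with a gap.
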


The upper bound of Streib and Trotter~\cite{ST14} on dimension in terms of height holds in fact in a more general setting, that of posets with planar cover graphs.  
Informally, the {\em cover graph} of a poset is just its diagram seen as an undirected graph.
It is not known whether the dimension of posets with planar cover graphs is bounded by a linear function of their height (or any polynomial function for that matter).    
We present a slightly better construction in that less restrictive setting:
\begin{theorem}\label{thm:lower-bound-2}
	For every $h\geq 1$, there is a poset of height $h$ with a planar cover graph and dimension at least $2h-2$.
\end{theorem}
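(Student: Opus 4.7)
The plan is to construct, for each $h\ge 2$, an explicit poset $P_h$ of height exactly $h$ whose cover graph is planar and whose dimension is at least $n:=2h-2$. The strategy is to exhibit inside $P_h$ a copy of the standard example $S_n$: that is, to find $n$ pairwise incomparable pairs $(a_i,b_i)$ (with $a_i$ minimal and $b_i$ maximal in $P_h$) such that $a_i<b_j$ in $P_h$ whenever $i\neq j$ (indices mod $n$). Once this is done, the classical Dushnik--Miller argument immediately yields $\dim(P_h)\ge n$: any single linear extension can reverse at most one critical pair $(a_i,b_i)$, since reversing two of them, say $(a_i,b_i)$ and $(a_j,b_j)$, would force a contradictory cycle via the relations $a_i<b_j$ and $a_j<b_i$.

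For the construction of $P_h$, I would adapt Kelly's planar posets, exploiting the crucial relaxation in the current setting that only the cover graph, and not the full upward (diagram) drawing, is required to be planar. This extra flexibility allows one to route cover edges along arbitrary planar curves rather than along monotone upward ones, and morally this is what doubles the number of critical pairs we can fit per unit of height compared to Kelly's example. Concretely, I would place the pairs $(a_i,b_i)$ cyclically around the outer face of a planar drawing and connect them by a carefully nested family of monotone chains of length at most $h-2$, sharing intermediate vertices where useful, so that the comparabilities $a_i<b_j$ for all $j\ne i$ are witnessed inside the disk without creating any comparability $a_i<b_i$. The parameter $n=2h-2$ should then emerge from balancing the length of these chains against the height budget.

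The chief obstacle is to simultaneously guarantee three properties of the construction: \emph{(i)} planarity of the underlying undirected cover graph once all witnessing chains are added, \emph{(ii)} height exactly $h$, and \emph{(iii)} absence of any monotone path from $a_i$ to $b_i$ in the Hasse diagram, for every $i$. Verifying \emph{(iii)} should reduce to a finite case analysis based on how the chains are routed in the cyclic layout. Verifying \emph{(i)} is best handled by exhibiting an explicit embedding; a natural attempt is to draw the whole structure on a cylinder so as to benefit from the cyclic symmetry, and then transfer the drawing to the sphere (equivalently, the plane) by puncturing one face. With these three properties established, the dimension lower bound $\dim(P_h)\ge 2h-2$ follows at once from the embedded standard example.
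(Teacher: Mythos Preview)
Your high-level strategy---embed a copy of the standard example $S_{2h-2}$ inside a height-$h$ poset with planar cover graph---is exactly the paper's strategy, and the observation that dropping upward-planarity buys flexibility is the right intuition. However, the proposal as written is not a proof: no construction is actually given. Every substantive step is phrased hypothetically (``I would adapt Kelly's planar posets'', ``the parameter $n=2h-2$ should then emerge'', ``verifying~(iii) should reduce to a finite case analysis'', ``a natural attempt is to draw the whole structure on a cylinder''). The central assertion---that the height budget allows exactly $2h-2$ critical pairs---is simply stated, not derived; nothing in your outline explains why the answer is $2h-2$ rather than, say, $h+10$ or $\tfrac{3}{2}h$. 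Kelly's construction itself gives only $h+1$, so ``adapting'' it to double the yield is the entire content of the theorem, and you have not said how to do it.

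The paper does this with a concrete construction rather than a general scheme. It combines two \emph{spider nets} (the variant of Kelly's example with a single minimum and a single maximum, whose cover graph is planar though its diagram is not); the cover relations in each spider net are oriented radially outward from a center, and the two nets are linked so that the labeled outer elements together induce $S_{2h-2}$. The paper exhibits the height-$6$ instance explicitly (containing $S_{10}$), checks the three properties you list by direct inspection of that picture, and observes that the pattern extends uniformly in $h$. If you want to complete your approach, you need to replace the hypothetical cyclic layout with an explicit poset and verify (i)--(iii) for it; the spider-net construction is one concrete way to do this.
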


\begin{figure}[t]
	\centering
	\includegraphics[scale=1.0]{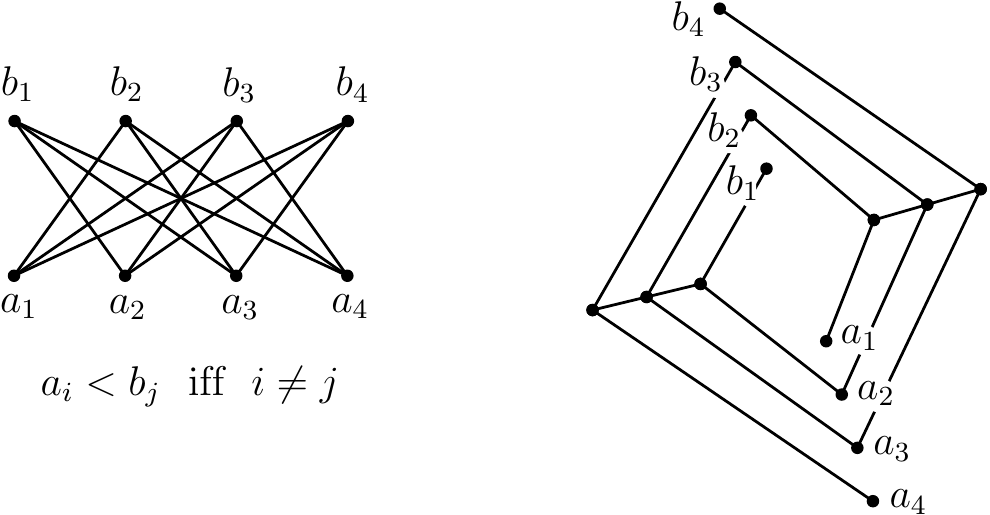}
	\caption{\label{fig:kelly} Standard example $S_4$ (left) and Kelly's construction containing $S_4$ (right).}
\end{figure}

It seems that planarity plays a crucial role in the existence of small bounds on the dimension in terms of the height.    
Indeed, our last contribution is a construction of posets having no $K_5$ minor in their cover graphs and with dimension exponential in the height.

\begin{theorem}\label{thm:lower-bound-K5}
	For each even $h\geq 2$, there is a poset of height $h$ with dimension at least $2^{h/2}$ whose cover graph excludes $K_5$ as a minor.
\end{theorem}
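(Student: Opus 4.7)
The plan is to construct the required poset by an inductive doubling construction. Define a sequence $P_1, P_2, \ldots$ of posets such that $P_k$ has height $2k$, dimension at least $2^k$, and a cover graph excluding $K_5$ as a minor; then $P_{h/2}$ proves the theorem. The base case is $P_1 = S_2$, the four-element standard example, whose cover graph is a $4$-cycle.

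For the inductive step, I build $P_{k+1}$ from two disjoint copies $P_k^{(0)}, P_k^{(1)}$ of $P_k$ together with a four-element ``doubling gadget'' $\{m_0, m_1, M_0, M_1\}$ that forms an $S_2$ on its own, with each $m_i$ placed as a new minimum attached to copy $P_k^{(1-i)}$ and each $M_i$ placed as a new maximum attached to copy $P_k^{(1-i)}$. The cover relations between the gadget and the copies are chosen so that each critical pair $(x,y)$ of $P_k^{(i)}$ extends to a critical pair of $P_{k+1}$ witnessed through the opposite-index gadget elements, and so that the two indices act as independent binary choices relative to the two copies. This raises the height by two.

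For the dimension lower bound, I would exhibit a standard-example substructure of order $2^{k+1}$ in $P_{k+1}$ indexed by pairs $(c, i)$ with $c$ a critical pair of $P_k$ and $i \in \{0,1\}$, and argue via an alternating-cycle argument, combined with the inductive hypothesis applied to each copy, that any linear extension of $P_{k+1}$ reverses at most one such pair. For the $K_5$-minor-freeness of the cover graph, I would design the attachments between the gadget and each copy so that the cover graph of $P_{k+1}$ arises as a clique-sum, along cliques of size at most three, of the cover graphs of the two copies with the $4$-cycle gadget graph, and then invoke the closure of the class of $K_5$-minor-free graphs under such clique-sums.

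The main obstacle is the tension between these two goals during the doubling step. Forcing dimensions to multiply rather than merely add typically requires many cross-copy comparabilities, which tends to create dense bipartite-like subgraphs in the cover graph and hence $K_5$ minors; conversely, connecting the gadget to each copy only through a few access vertices — as required for a clique-sum decomposition along small cliques — makes it difficult to ensure that the gadget's choices interact with every critical pair of the copies. The heart of the proof lies in finding attachments that are simultaneously sparse enough to preserve $K_5$-minor-freeness and rigid enough to force the dimension to double.
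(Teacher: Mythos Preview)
Your proposal is not a proof but a plan with an openly unresolved gap: you yourself identify that connecting two copies of $P_k$ through a small gadget makes it hard to see why the dimension should double rather than merely increase additively, and you do not explain how to overcome this. That concern is well-founded. If the gadget attaches to each copy along a bounded-size cutset (as required for your clique-sum argument), then any linear extension of $P_{k+1}$ restricted to a copy is essentially free, constrained only through a few interface vertices; it is not clear how a single binary choice in the gadget can interact with all $2^k$ critical pairs of a copy simultaneously. Absent a concrete description of the attachments and a verification that they force a standard example of order $2^{k+1}$, there is no proof here.

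The paper's construction avoids this tension by a different architecture. Instead of taking two disjoint copies and a global gadget, it keeps a \emph{single} copy of $P_{h-2}$ and attaches a \emph{local} four-element $S_2$-gadget to each of the $2^{(h-2)/2}$ incomparable min--max pairs $(a_i,b_i)$ individually: two new minima below $a_i$, two new maxima above $b_i$, with the four new elements forming an $S_2$ among themselves. This immediately doubles the order of the standard example induced on $\Min\cup\Max$ (each old critical pair spawns two new ones), so the dimension lower bound is trivial. Because each gadget is attached only at the pair $\{a_i,b_i\}$, the cover graph has treewidth at most $3$ (shown by an explicit tree decomposition in which the subtrees of $a_i$ and $b_i$ meet), hence is $K_5$-minor-free. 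In short: the doubling happens locally at each critical pair, not globally between two copies, and that is precisely what makes the sparsity and the dimension growth compatible.
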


In order to motivate our results, we provide background on planar posets, their dimension, and related problems in the rest of this introduction.

Let us start by mentioning the following equivalent definition of dimension:  $\dim(P)$ is the least integer $d$ such that the elements of $P$ can be embedded into $\mathbb{R}^d$ in such a way that $x<y$ in $P$ if and only if the point of $x$ is below the point of $y$ with respect to the product order on $\mathbb{R}^d$.
One of the very first results about dimension is that planar posets having a single minimal element and a single maximal element can be embedded in the plane:
\begin{theorem}[Baker, Fishburn, and Roberts~\cite{BFR72}]\label{thm:zero-one}
	For every planar poset $P$ with a single minimal element and a single maximal element, we have
	\[
	\dim(P)\leq 2.
	\]
\end{theorem}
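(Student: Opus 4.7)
The plan is to exhibit two linear extensions $L_1$ and $L_2$ of $P$ whose intersection equals $P$; equivalently, every incomparable pair $\{x,y\}$ must be placed in opposite orders by $L_1$ and $L_2$. First I would fix a planar drawing of the Hasse diagram of $P$ in which every cover edge points upward, with the unique minimum $\hat 0$ at the bottom and the unique maximum $\hat 1$ at the top. Since both lie on the outer face, its boundary decomposes into a leftmost directed $\hat 0$-$\hat 1$ path $\lambda$ and a rightmost one $\rho$, turning the diagram into an st-planar graph.

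Next I would introduce a ``horizontal'' relation on incomparable pairs. For any element $x$, a directed $\hat 0$-$\hat 1$ path $\pi_x$ through $x$ together with $\lambda$ and $\rho$ splits the bounded region of the drawing into a left part (between $\lambda$ and $\pi_x$) and a right part (between $\pi_x$ and $\rho$). If $y$ is incomparable to $x$ then $y$ does not lie on $\pi_x$, so it falls on one side; declare $y <_L x$ if $y$ lies to the left and $y <_R x$ otherwise. The first substep is to check that this assignment is independent of the choice of $\pi_x$: any two directed $\hat 0$-$\hat 1$ paths through $x$ bound a union of closed regions whose interior vertices are all comparable to $x$ (each such vertex lies on a directed subpath from $\hat 0$ to $x$ or from $x$ to $\hat 1$), so $y$'s side depends only on $x$ and $y$. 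Then I define $\leq_1$ to be any linear extension of $\leq_P \cup \{(y,x) : y <_L x\}$ and $\leq_2$ any linear extension of $\leq_P \cup \{(y,x) : y <_R x\}$; incomparable pairs are reversed between them by construction, so their intersection is exactly $P$.

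The main obstacle is to verify that the two binary relations above are acyclic, so that linear extensions do exist. It is enough to prove that $<_L$ is transitive on incomparable triples and never closes a cycle together with $<_P$, and symmetrically for $<_R$. The key geometric ingredient is the Jordan curve theorem applied to closed curves built from concatenations of $\lambda$, some $\pi_x$, and $\rho$: planarity together with the upward orientation forces the ``to the left of'' relation inherited from the embedding to be a strict partial order compatible with $<_P$. Once that lemma is in hand, the construction above immediately yields $L_1 \cap L_2 = P$, whence $\dim(P) \leq 2$.
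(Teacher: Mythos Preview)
The paper does not prove Theorem~\ref{thm:zero-one}; it is quoted as a classical result of Baker, Fishburn, and Roberts and used only as background. So there is no in-paper proof to compare your attempt against.

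That said, your outline is the standard argument and is essentially correct. Viewing the planar diagram with $\hat 0$ and $\hat 1$ on the outer face as an $st$-planar graph, and using a directed $\hat 0$--$\hat 1$ path through each $x$ to split the incomparable elements into a left class and a right class, is exactly the classical route. Your observation that the side of $y$ is independent of the chosen path $\pi_x$ is right, and your justification (vertices trapped between two such paths lie on a directed path to or from $x$) is the correct geometric reason; the formal version is just the Jordan curve argument you mention.

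The one place where you should be more explicit is the acyclicity of $\leq_P \cup <_L$. The cleanest way is not to prove transitivity of $<_L$ on incomparable triples directly, but to argue that a putative cycle in $\leq_P \cup <_L$ gives, after shortcutting the $\leq_P$-segments, an alternating sequence of ``left-of'' and ``$\leq_P$'' steps that forces two upward paths to cross in a way contradicting the side assignment. Equivalently, one shows that any maximal chain of $P$ yields a Jordan curve from $\hat 0$ to $\hat 1$, and for incomparable $x,y$ one has $y$ left of $\pi_x$ if and only if $x$ is right of $\pi_y$; from there acyclicity is immediate. Once you pin this down, your construction of $L_1$ and $L_2$ gives $L_1\cap L_2 = P$ and hence $\dim(P)\leq 2$.
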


Similarly, planar posets with a single minimal element can be embedded in $\mathbb{R}^3$:

\begin{theorem}[Trotter and Moore~\cite{TM77}]\label{thm:zero-or-one}
	For every planar poset $P$ with a single minimal element, 
	\[
	\dim(P)\leq 3.
	\]
\end{theorem}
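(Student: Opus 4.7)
The natural strategy is to mimic the proof of Theorem~\ref{thm:zero-one} by augmenting $P$ with a unique maximum $\hat{1}$, hoping to obtain a planar poset with both a top and a bottom to which Theorem~\ref{thm:zero-one} applies directly. However, connecting a new maximum to every maximal element of $P$ typically destroys planarity: only the maximal elements simultaneously visible from the outer face of a planar drawing can be joined to a common $\hat{1}$ without crossings. The plan is therefore to replace this algebraic augmentation with a geometric analysis of a fixed planar drawing of the diagram of $P$.

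I would fix a planar drawing of the diagram of $P$ with the unique minimum $\hat{0}$ on the boundary of the outer face. The embedding induces a cyclic order on the edges incident to each element $x \neq \hat{0}$; reading this cyclic order starting from a canonical reference (for instance, the edge that first leaves the face containing a fixed curve from $x$ to the outer face) yields a linear order on the lower covers of $x$. This lets me define, for every $x \neq \hat{0}$, a canonical \emph{leftmost chain} from $\hat{0}$ to $x$ by iteratively descending to the leftmost lower cover, and symmetrically a \emph{rightmost chain}. I then construct three linear extensions $L_1$, $L_2$, $L_3$ of $P$: the extension $L_1$ orders elements lexicographically by their leftmost chains, so that if the leftmost chain of $x$ passes strictly to the left of that of $y$ at their first point of divergence then $x < y$ in $L_1$; the extension $L_2$ is defined analogously using rightmost chains; and $L_3$ is a standard bottom-up extension, e.g.\ ordering elements by increasing distance from $\hat{0}$ in the diagram and breaking ties arbitrarily.

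The main obstacle will be verifying that every incomparable pair $(a,b)$ in $P$ is reversed by at least one of $L_1$, $L_2$, $L_3$. The argument should proceed by case analysis on the topological relationship between the canonical chains to $a$ and $b$: if $a$ sits strictly to the left of $b$ relative to a shared branching point in the leftmost or rightmost chains, then $L_1$ or, symmetrically, $L_2$ reverses the pair, while pairs that are nested or topologically symmetric in the drawing should instead be reversed by $L_3$. The delicate step is ruling out a hypothetical critical pair that survives all three extensions; this will ultimately rest on the Jordan curve theorem applied to the closed walk formed by the leftmost and rightmost chains of $a$ together with the analogous chains of $b$, exploiting the fact that $\hat{0}$ is the unique minimum so all four chains share a common endpoint and therefore enclose a well-defined planar region separating $a$ from $b$.
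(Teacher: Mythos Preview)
The paper does not prove this theorem; it is simply quoted from Trotter and Moore~\cite{TM77} as background in the introduction, so there is no proof here to compare your proposal against.

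As for the proposal itself, there is a concrete flaw: your $L_3$, ordering by graph distance from $\hat{0}$ with arbitrary tie-breaking, is not in general a linear extension of $P$. Take cover relations $\hat{0}<a<c<d$ and $\hat{0}<b<d$; then $c<d$ in $P$, yet both $c$ and $d$ lie at distance~$2$ from $\hat{0}$ in the cover graph, so an arbitrary tie-break may place $d$ before $c$. Replacing graph distance by height (length of a longest chain from $\hat{0}$), or simply by $y$-coordinate in the fixed drawing, would repair this.

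Beyond that, the case analysis is only gestured at. You have not specified which incomparable pairs are ``nested or topologically symmetric'', nor argued why $L_3$ reverses exactly those while $L_1,L_2$ handle the rest. The Jordan-curve step you allude to in the final paragraph is indeed the heart of any such argument, but the mere fact that all four chains share the endpoint $\hat{0}$ does not by itself produce a region separating $a$ from $b$: one still has to rule out the leftmost chain to $a$ crossing the rightmost chain to $b$ (and the other combinations), and this is precisely where planarity of the diagram and the exact definitions of ``leftmost'' and ``rightmost'' must do real work that your sketch does not yet supply.
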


Yet, in general planar posets can have arbitrarily large dimension.
This was first shown by Kelly~\cite{Kel81} in 1981.  
Figure~\ref{fig:kelly} (right) illustrates Kelly's construction\footnote{more precisely, a simpler variant of his original construction that is standard in the literature} of order $4$; its definition for any fixed order $k \geq 2$ can be inferred from the figure.  
This poset contains as an induced subposet the height-$2$ poset depicted on the left, the so-called standard example $S_4$.
For $k \geq 2$, the \emph{standard example} $S_k$ is defined as the poset consisting of $k$ minimal elements $a_1,\ldots,a_k$ and $k$ maximal elements $b_1,\ldots,b_k$, such that $a_i<b_j$ in $S_k$ if and only if $i\neq j$.
It is not hard to see that $S_k$ has dimension exactly $k$, for all $k\geq 2$.
Hence, Kelly's construction in Figure~\ref{fig:kelly} has dimension at least $4$, and in general Kelly's examples have arbitrarily large dimension.

We note that there is a variant of Kelly's construction known as the {\em spider net}~\cite{ST14}, which is not a planar poset but still has a planar cover graph, and has a single minimal element, a single maximal element, and unbounded dimension (see Figure~\ref{fig:trotter-spider}).  
This is in sharp contrast with Theorem~\ref{thm:zero-one} for planar posets.

\begin{figure}[t]
	\centering
	\includegraphics[scale=1.0]{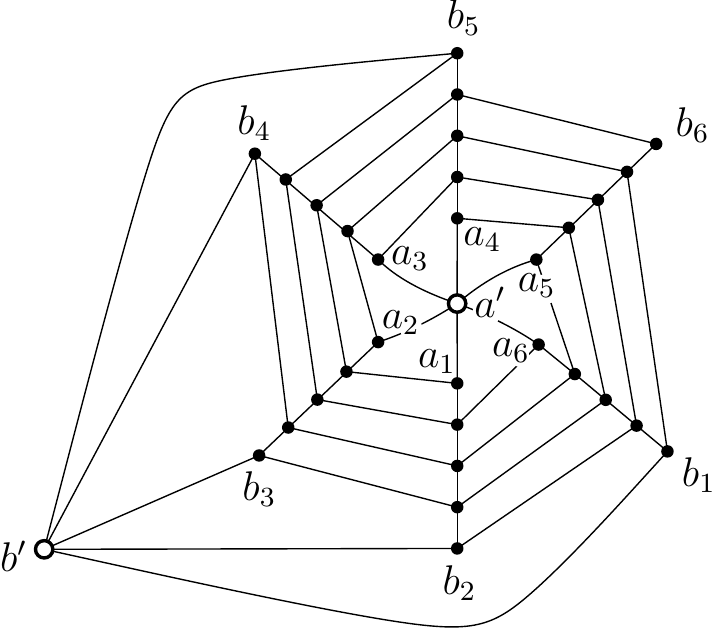}
	\caption{\label{fig:trotter-spider}The spider net of order $6$ (from~\cite{ST14}), a poset with a planar cover graph, a single minimal element ($a'$) and a single maximal element ($b'$). Taking $a'$ as center, cover relations are oriented towards the exterior.}
\end{figure}

While there are posets of bounded height with unbounded dimension (such as the standard examples), it is worth noting that the height of Kelly's construction grows with its order.
As mentioned earlier, this is not a coincidence:

\begin{theorem}[Streib and Trotter~\cite{ST14}]\label{thm:ST}
	There is a function $f:\mathbb{N}\to \mathbb{N}$ such that for every $h\geq 1$ and every poset $P$ of height $h$ with a planar cover graph, 
	\[
	\dim(P) \leq f(h).
	\]
\end{theorem}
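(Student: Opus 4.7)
The plan is to prove the bound by induction on the height $h$, using the standard reformulation that $\dim(P)$ equals the minimum number of \emph{reversible} families of incomparable pairs required to cover $\Inc(P)$. The base case $h=1$ gives an antichain, for which $\dim(P)\le 2$. For the inductive step, I would separate off the contribution of pairs $(a,b)\in\Inc(P)$ with $a\in\Min(P)$ and $b\in\Max(P)$; every other incomparable pair sits inside $P\setminus\Min(P)$ or $P\setminus\Max(P)$, both of which have height $h-1$ and inherit planar cover graphs, so by induction such pairs are covered by at most $2f(h-1)$ reversible families (a linear extension of a subposet lifts to $P$ by appending the removed layer at the bottom or top). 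The task then reduces to covering the min-max incomparable pairs by $g(h)$ reversible families for some function $g$.

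For this core task, I would fix a plane drawing of the cover graph $G$ of $P$. To each min-max incomparable pair $(a,b)$ I assign a bounded \emph{signature} read off from the planar embedding: the rotation pattern around $a$ and around $b$, a canonical face of $G$ witnessing the incomparability, and the interleaving of the upper covers of $a$ with the lower covers of $b$ along a shortest dual-graph walk between the two sides. The hope is that the number of distinct signatures is bounded by a function of $h$ alone, using the fact that all chains from any $a$ to any $b$ have length at most $h-1$ and therefore have bounded combinatorial type in the plane. It then suffices to show that min-max pairs sharing any one signature form a reversible family.

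The main step is a Ramsey-type no-standard-example argument. Suppose, for contradiction, that some signature class contains a large standard example $S_k$, witnessed by incomparable pairs $\{(a_i,b_j):i\neq j\}$ together with the comparabilities $a_i<b_i$ in $P$. Each relation $a_i<b_i$ is realized by a cover path of length at most $h-1$ inside the drawing. Coherence of signatures forces these $k$ paths to be homotopic, and the bounded height confines them to a small planar region. I would then invoke the Jordan curve theorem to derive a crossing that imposes some $a_i<b_j$ with $i\neq j$, contradicting the standard example. Bounding $k$ in terms of $h$ yields $g(h)$, completing the induction via $f(h)=2f(h-1)+g(h)$.

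The main obstacle is precisely this planar crossing argument: turning the slogan "homotopic bounded-height paths must cross" into a rigorous obstruction requires tracking the rotation systems and dual-graph data carefully across all pairs of the putative $S_k$, and any bound on $k$ obtainable this way is at least tower-type in $h$. The linear bound proved in the present paper, which crucially strengthens the hypothesis from \emph{planar cover graph} to \emph{planar diagram}, is a qualitative leap beyond what this Ramsey approach can deliver.
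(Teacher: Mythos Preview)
First, note that the paper does not itself prove Theorem~\ref{thm:ST}; it is quoted as a prior result of Streib and Trotter, and only the unfolding machinery of Section~\ref{sec:tools} is described as originating in their work. So there is no in-paper proof to compare against directly.

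Your proposal has a genuine logical gap at the core step. You set out to show that each signature class is \emph{reversible}, but your contradiction hypothesis is ``some signature class contains a large standard example $S_k$''. That is not the negation of reversibility: a set of incomparable pairs fails to be reversible precisely when it contains an alternating cycle of some size $\geq 2$, and alternating cycles need not arise from standard examples. Even a successful bound on $k$ would only show that each class avoids large $S_k$, which does not bound dimension---there are posets containing no $S_3$ yet having arbitrarily large dimension. Separately, the signature count is not controlled: you read signatures from rotation data around $a$ and $b$, but vertex degrees in a planar cover graph are not bounded in terms of the height (already at height $2$ a minimal element can have arbitrary degree), so this data alone takes unboundedly many values and your claim that the number of signatures is bounded by a function of $h$ is unsupported. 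The actual Streib--Trotter argument, as reflected in Section~\ref{sec:tools} of this paper, proceeds quite differently: after the min-max reduction one \emph{unfolds} the poset from a fixed element into layers $A_0,B_1,A_1,\ldots$, localizes the dimension into a single $A_i$--$B_j$ block via Lemma~\ref{lem:unfolding-chi}, and then analyzes alternating cycles (not standard examples) directly in the planar drawing of that block.
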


It soon turned out that a bound on the dimension in terms of the height holds more generally for posets with a sparse and `well-structured' cover graph.
This was shown in a series of generalizations of Theorem~\ref{thm:ST}:	 
Given a class of graphs $\calC$, every poset with a cover graph in $\calC$ has dimension bounded in terms of its height if
\begin{enumerate}
	\item $\calC$ excludes an apex graph as a minor (\cite{JMMTWW});
	\item $\calC$ excludes a fixed graph as a (topological) minor (\cite{Walczak17,MW});
	\item $\calC$ has bounded expansion (\cite{JMW-sparsity}).
\end{enumerate}

It is worth noting that in all the cases above the resulting bounds on the dimension are exponential in the height. 
As mentioned before, this is unavoidable since there are posets with $K_5$-minor-free cover graphs having dimension exponential in their height (Theorem~\ref{thm:lower-bound-K5}). 

The paper is organized as follows. 
First, we provide some preliminaries on dimension in Section~\ref{sec:preliminaries}. 
Then we prove Lemma~\ref{lemma:point-below-max} in Section~\ref{sec:main-lemma}, a lemma that is at the heart of our proof of Theorem~\ref{thm:main}. 
Informally, this lemma states that a linear bound holds for planar posets that have a special element $x_0$ such that $x_0 \leq b$ for all maximal elements $b$ of the poset.      
Its proof uses heavily the planarity of the diagram (as opposed to merely using the planarity of the cover graph). 
Next, we introduce some extra tools in Section~\ref{sec:tools}, including a standard technique called `unfolding a poset' that originates in the work of Streib and Trotter~\cite{ST14}. 
We finish the proof of our main theorem in Section~\ref{sec:main-proof}, by reducing the case of general planar posets to the case covered by  Lemma~\ref{lemma:point-below-max}. 
Finally, we describe in Section~\ref{sec:lower-bounds} the various constructions mentioned at the beginning of the introduction.

\section{Preliminaries}\label{sec:preliminaries}
Let us first recall some basic definitions.
Let $P$ be a poset.
A {\em chain} in $P$ is a set of pairwise comparable elements of $P$.
The {\em height} of $P$ is the maximum size of a chain in $P$. 
We let $\Min(P)$ and $\Max(P)$ denote the set of minimal and maximal elements of $P$, respectively.

A relation $a \leq b$ in $P$ is a {\em cover relation} of $P$ if $a \neq b$ and $a \leq b$ is not implied by transitivity, that is, there is no element $c$ distinct from $a, b$ such that $a \leq c \leq b$ in $P$.  
The usual way to visualize $P$ is to draw its \emph{diagram}:  
Elements are drawn as distinct points in the plane and each cover relation $a \leq b$  in $P$ is represented by a curve from $a$ to $b$ going upwards.  
If this diagram can be drawn in a planar way then $P$ is said to be a {\em planar poset}.   
The {\em cover graph} of $P$, denoted $\cover(P)$, is the undirected graph with the set of elements of $P$ as vertex set, and where two elements $a, b$ are adjacent if they are in a cover relation in $P$.
(Informally, this is the diagram of $P$ seen as an undirected graph.) 

While planar posets have a planar cover graph, the converse is not necessarily true: 
For every $h\geq 3$, there is a non-planar poset of height $h$ having a planar cover graph.
Two examples of height $4$ are shown in Figure~\ref{fig:non-planar}. 
(In the special case of height-$2$ posets however, the two properties are equivalent~\cite{Moore-thesis}.)  

\begin{figure}[t]
	\centering
	\includegraphics[scale=1.0]{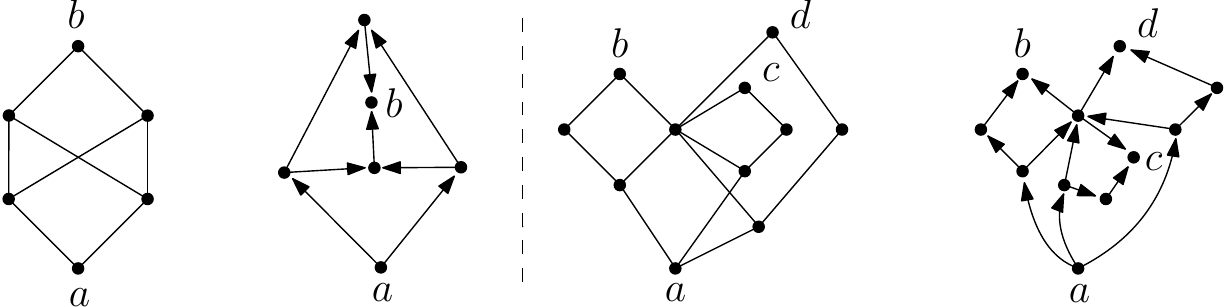}
	\caption{\label{fig:non-planar}
		Posets with non-planar diagrams but planar cover graphs.}
\end{figure}

A poset $Q$ is an {\em induced subposet} of $P$ if $Q$ is obtained by selecting a subset $X$ of the elements of $P$ together with all their relations, that is, given $a,b\in X$, we have $a\leq b$ in $Q$ if and only if $a \leq b$ in $P$.

The {\em upset $\Up_P(x)$} of an element $x\in P$ is the set of all elements $y\in P$ such that $x \leq y$ in $P$. 
If we reverse the relation $x \leq y$ into $x \geq y$, we get the definition of the {\em downset} of $x$, which we denote by $\D_P(x)$. 
In both cases, we drop the subscript $P$ when the poset is clear from the context.   

An {\em incomparable pair} of $P$ is an ordered pair $(x,y)$ of elements of $P$ that are incomparable in $P$.
We denote by $\Inc(P)$ the set of incomparable pairs of $P$.
Let $I \subseteq \Inc(P)$ be a non-empty set of incomparable pairs of $P$.
We say that $I$ is \emph{reversible} if there is a linear extension $L$ of $P$ \emph{reversing} each pair of $I$, that is, we have $x>y$ in $L$ for every $(x,y)\in I$.
By $\dim(I)$ we denote the least $d$ such that $I$ can be partitioned into $d$ reversible sets. 
We will use the convention that $\dim(I)=1$ when $I$ is an empty set.   
It is easily seen that $\dim(P)$ is equal to $\dim(\Inc(P))$.
Given sets $A,B\subseteq P$, we let $\Inc(A,B)$ be the set of pairs $(a,b)\in\Inc(P)$ with $a\in A$ and $b\in B$.
We use the abbreviation $\dim(A,B)$ for $\dim(\Inc(A,B))$.

A sequence $(x_1,y_1), \dots, (x_k,y_k)$ of pairs from $\Inc(P)$ with $k \geq 2$ is said to be
an \emph{alternating cycle of size $k$} if $x_i\leq y_{i+1}$ in $P$ for all $i\in\set{1,\ldots,k}$ (cyclically, so $x_k\le y_1$ in $P$ is required).
We call it a \emph{strict} alternating cycle if for each $i,j\in\set{1,\ldots,k}$, we have $x_i\le y_{j}$ in $P$ if and only if $j=i+1$ (cyclically).
Note that in this case $x_1, x_2, \dots, x_k$ are all distinct, and $y_1, y_2, \dots, y_k$ are all distinct.
(We remark that we could have $x_i=y_{i+1}$ for some $i$'s.)

Observe that if $(x_1,y_1), \dots, (x_k,y_k)$ is a strict alternating cycle in $P$, then this set of incomparable pairs cannot be reversed by a linear extension $L$ of $P$.
Indeed, otherwise we would have $y_i < x_i \leq y_{i+1}$ in $L$ for each $i \in \{1,2, \dots, k\}$ cyclically, which cannot hold.
Hence, strict alternating cycles are not reversible.
The converse is also true and this was originally observed
by Trotter and Moore~\cite{TM77}:
A set $I$ of incomparable pairs of a poset $P$ is reversible if and only if $I$ contains no strict alternating cycle.

The following easy lemma allows us to restrict our attention to incomparable pairs of the form $(a,b)$ where $a$ is a minimal element and $b$ is a  maximal element (see for instance~\cite[Observation~3]{JMTWW} for a proof).

\begin{lemma}\label{lem:min-max-reduction}
For every poset $P$ there is a poset $Q$ containing $P$ as an induced subposet such that $P$ and $Q$ have the same height and 
\begin{enumerate}
\item $\cover(Q)$ is obtained from $\cover(P)$ by adding some degree-$1$ vertices to $\cover(P)$, and\label{item:deg1}
 \item $\dim(P)\leq \dim(\Min(Q), \Max(Q))$.
\end{enumerate}
\end{lemma}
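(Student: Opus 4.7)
The plan is to construct $Q$ from $P$ by attaching new pendant vertices below and above each appropriate element, and then to transport reversible partitions between $\Inc(\Min(Q), \Max(Q))$ and $\Inc(P)$ through a simple lifting map.

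Concretely, I would let $Q$ be the poset obtained from $P$ by introducing a fresh element $x^-$ for each $x \in P \setminus \Min(P)$, with the sole cover relation $x^- < x$, and symmetrically a fresh element $y^+$ for each $y \in P \setminus \Max(P)$, with sole cover relation $y < y^+$. No comparabilities between the original elements of $P$ are changed, so $P$ is an induced subposet of $Q$, and each newly added vertex has exactly one neighbor in $\cover(Q)$, giving (\ref{item:deg1}).

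The main technical step is checking that the height does not grow. Let $C$ be any chain of $Q$. The new minimal elements are pairwise incomparable in $Q$ (and likewise the new maximal elements), so $C$ contains at most one $x^-$ and at most one $y^+$. If $C$ contains $x^-$, then every other element of $C$ lying in $P$ must be $\geq x$ in $P$, because the only relations involving $x^-$ pass through $x$; symmetrically for $y^+$. Since $x$ is not minimal in $P$ there exists $w < x$ in $P$, and since $y$ is not maximal there exists $u > y$ in $P$; substituting $w$ for $x^-$ and $u$ for $y^+$ inside $C$ yields a chain of $P$ with at least $|C|$ elements. Hence the height of $Q$ equals the height of $P$.

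For (ii), I define a map $\phi : \Inc(P) \to \Min(Q) \times \Max(Q)$ by setting $\phi(x,y) = (x',y')$ with $x' = x$ if $x \in \Min(P)$ and $x' = x^-$ otherwise, and symmetrically for $y'$. A direct check using the structure of the new cover relations shows that $(x', y') \in \Inc(Q)$, since $x' \leq y'$ in $Q$ would force $x \leq y$ in $P$ (contradicting $(x,y) \in \Inc(P)$), and similarly in the reverse direction. Now given a partition of $\Inc(\Min(Q), \Max(Q))$ into reversible sets $R_1, \dots, R_d$, witnessed by linear extensions $L_1,\dots,L_d$ of $Q$, the restriction of each $L_i$ to $P$ is a linear extension of $P$ that reverses $\phi^{-1}(R_i)$, because $y \leq y' <_{L_i} x' \leq x$ holds in $L_i$ for every $(x,y) \in \phi^{-1}(R_i)$. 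Pulling back the partition this way gives $\dim(P) \leq d$, as required. The only nontrivial obstacle is the height argument; the rest is bookkeeping around the lifting map.
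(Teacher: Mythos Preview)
Your proof is correct; the construction and the lifting argument are exactly the standard ones. The paper itself does not prove this lemma but only cites~\cite[Observation~3]{JMTWW}, and your argument is essentially the proof found there.
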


For our purposes, an important observation is that if $P$ has a planar diagram then it is easily seen from property~\ref{item:deg1} that the poset $Q$ in the above lemma also has a planar diagram.
Hence, in order to bound $\dim(P)$ by a function of the height of $P$, it is enough to bound $\dim(\Min(Q),\Max(Q))$ in terms of the height of $Q$ (which equals that of $P$).
This will be used in the proof of our main theorem.
We will refer to $\dim(\Min(Q),\Max(Q))$ as the \emph{min-max dimension} of $Q$.

\section{Main lemma}\label{sec:main-lemma}

In this section we prove the following lemma, which is the key lemma in the proof of our main theorem. 
We remark that the proof of the lemma uses heavily the drawing of the diagram in the plane and does not extend to the case of a poset with a planar cover graph.

\begin{lemma}\label{lemma:point-below-max}
 Let $P$ be a planar poset of height $h$ and let $B\subseteq \Max(P)$.
 If there is an element $x_0$ such that $x_0\leq b$ in $P$ for all $b\in B$, then $\dim(\Min(P),B)\leq 6h+3$.
\end{lemma}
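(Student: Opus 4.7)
My plan is to partition $\Inc(\Min(P),B)$ into at most $6h+3$ reversible classes by exploiting the planarity of the diagram of $P$, not merely of its cover graph.

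Fix a planar drawing of the diagram of $P$ and, after rerouting if necessary, assume that $x_0$ lies on the outer face. For each $b\in B$, since $x_0\leq b$ in $P$, there exists a monotone cover-path $\pi(b)=(v^b_0,\dots,v^b_{k_b})$ from $v^b_0=x_0$ up to $v^b_{k_b}=b$ of length $k_b\leq h-1$, and I would fix one such path per $b$, for instance the leftmost one with respect to the rotation system of the drawing. The basic observation is that for every pair $(a,b)\in\Inc(\Min(P),B)$, every vertex $v^b_i$ on $\pi(b)$ is incomparable to $a$: the relation $v^b_i\geq a$ would give $b\geq v^b_i\geq a$, and $v^b_i\leq a$ would force $v^b_i=a$ by minimality of $a$ and hence $a=v^b_i\leq b$; both contradict $(a,b)\in\Inc$. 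In particular $a$ does not lie on the simple curve $\pi(b)$ in the drawing.

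Next I would assign each pair $(a,b)$ a \emph{type} $\tau(a,b)$ drawn from a label set of size $6h+3$. Natural ingredients are the side $\sigma(a,b)\in\{L,R\}$ of the Jordan curve obtained by closing $\pi(b)$ through the outer face (well-defined because $x_0$ is on the outer face), a level $i(a,b)\in\{0,\dots,h-1\}$ recording where along $\pi(b)$ the planar interaction between $a$ and the path is localized, together with a constant-size flag distinguishing a few local configurations. The suggestive factorization $6h+3=3(2h+1)$ hints at a three-way case split, perhaps on the nature of a cover edge between $\pi(b)$ and $a$'s side, with each case contributing $2h+1$ sub-types indexed by a level plus a boundary value.

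The heart of the proof, and what I expect to be the main obstacle, is showing that each color class $I_\tau=\{(a,b)\in\Inc(\Min(P),B):\tau(a,b)=\tau\}$ is reversible. By the Trotter--Moore criterion it suffices to forbid monochromatic strict alternating cycles. Given a putative such cycle $(a_1,b_1),\dots,(a_j,b_j)$, each relation $a_i\leq b_{i+1}$ supplies a monotone cover-path $\pi_i$ in the diagram from $a_i$ up to $b_{i+1}$; superposing the $\pi_i$ with the paths $\pi(b_1),\dots,\pi(b_j)$ in the planar drawing, and using that all pairs share the same side and the same level, a Jordan-curve separation argument should force two of these curves to meet in a way that produces by transitivity a comparability $a_i\leq b_i$, contradicting $(a_i,b_i)\in\Inc$. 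This is the step that truly needs the drawing, not just the cover graph, and it will likely absorb most of the technical effort. Once monochromatic strict alternating cycles are ruled out, summing over the $6h+3$ types yields $\dim(\Min(P),B)\leq 6h+3$.
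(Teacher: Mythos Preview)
Your proposal is not a proof but an outline, and the outline has a genuine gap at its core: the ``level'' $i(a,b)$ is never defined. You say it should record ``where along $\pi(b)$ the planar interaction between $a$ and the path is localized,'' but there is no canonical such location --- $a$ is simply on one side of the closed curve, and nothing singles out a particular height on $\pi(b)$. Without a concrete definition there is nothing to check, and the reversibility claim for each colour class is pure speculation. You acknowledge this yourself (``this is the step that \ldots\ will likely absorb most of the technical effort''), but that is precisely the step that \emph{is} the lemma.

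The paper's approach is structurally quite different from yours and shows why a simple side-plus-level label is unlikely to work. It does not fix one path $\pi(b)$ per element of $B$ and colour pairs by their position relative to that path. Instead, after peeling off three easy reversible sets (pairs with $a$ drawn above $b$, and pairs where one of $a,b$ cannot ``see'' the other's horizontal line), it splits the remaining pairs by whether $a$ sees the left or right side of $b$; by symmetry this costs a factor $2$. For one side it further splits according to whether the pair admits a \emph{separator} (a walk between $a$ and $b$ touching neither's comparability class) and whether the pair is \emph{dangerous} relative to $x_0$, yielding three nontrivial subsets. For each subset it proves, via an intricate analysis of ``walls'' built from walks, that every strict alternating cycle contains a \emph{special} consecutive pair $(a_j,b_j),(a_{j+1},b_{j+1})$ with $a_j$ below $a_{j+1}$ and $b_{j+1}$ below $b_j$ in a controlled nested way. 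It then builds a DAG on the incomparable pairs whose edges are these special transitions, and shows that any directed path of length $\ell$ forces a chain of length $\ell$ in $P$ by threading a walk from $x_0$ to some $b$ through $\ell-1$ pairwise disjoint nested curves. This is where $x_0$ is actually used, and it is far from a one-line Jordan-curve argument: the disjointness of the curves and the fact that $x_0$ sits outside (or inside) the nested structure each require several claims to establish. The arithmetic is $3 + 2\cdot 3h = 6h+3$, not your $3(2h+1)$, and the three-way split is on separator/dangerous status, not on local configurations near a fixed path.
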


The proof of Lemma~\ref{lemma:point-below-max} is split into a number of steps.  
The goal is to partition the set $\Inc(\Min(P),B)$ into at most $6h+3$ reversible subsets.
Here is a brief outline of the proof: 
First, we deal with some incomparable pairs that can easily be reversed using a bounded number of linear extensions (three). 
The remaining set of incomparable pairs has a natural partition into those pairs $(a,b)$ such 
that $a$ is `to the left' of $b$ and those such that $a$ 
is `to the right' of $b$ (see Claim~\ref{claim:left-and-right}). 
Using symmetry, we can then focus on one of these two types of incomparable pairs.   
At that point, we study how alternating cycles must look like.   
We establish various technical properties of these alternating cycles, which enable us to show that if the dimension is large, then there is a large `nested structure' in the diagram of $P$, with some maximal element $b$ buried deep inside and the special element $x_0$ drawn outside the structure. 
This is where the existence of $x_0$ is used: A path in the diagram witnessing the relation $x_0 \leq b$ in $P$ has to go through the whole nested structure. 
This path will then meet a number of disjoint curves from the diagram, and by planarity the path can only enter such a curve in an element of the poset. 
This way, we deduce that the path under consideration contains many elements from $P$, and 
therefore that the height of $P$ is large. 

Let us now turn to the proof of Lemma~\ref{lemma:point-below-max}. 
Fix a planar drawing of the diagram of $P$. 
Let $A:=\Min(P)$ and $I_1:=\Inc(A,B)$. 
We may assume without loss of generality that $x_0 \in A$. 
(Indeed, if not then there exists $a\in A$ with $a < x_0$ in $P$ and we simply take $a$ as the new $x_0$.) 
We use the standard coordinate system where each point in the plane is characterized by an $x$-coordinate and a $y$-coordinate.
We may assume without loss of generality that no two elements of $P$ have the same $y$-coordinate in the drawing.

\subsection{Some easy cases}
Given two distinct elements $a,b \in P$, we say that $a$ is \emph{drawn below} 
(\emph{above}) $b$
if the $y$-coordinate of $a$ is less than (greater than, respectively) that of $b$.

Let $I'_1$ be the set of all incomparable pairs $(a,b)\in I_1$ such that $a$ is drawn above $b$.
Observe that if we order the elements of $P$ by increasing order of their 
$y$-coordinates in the drawing, then we obtain a linear extension of $P$ (by the 
definition of a diagram).
This linear extension reverses all pairs in $I'_1$.
Let $I_2:=I_1 - I'_1$ be the set of remaining incomparable pairs in $I_1$, that is, 
those pairs $(a,b) \in I_1$ such that $a$ is drawn below $b$.  
Clearly, 
\[
\dim(I_1) \leq \dim(I_2) + \dim(I_1') \leq \dim(I_2) + 1,
\]
thus we can restrict our attention to pairs in $I_2$.  

Before pursuing further, let us introduce some terminology. 
A {\em witnessing path} for a relation $a\leq b$ in $P$ is a path $u_1,u_2, \dots ,u_k$ in the cover 
graph of $P$ with $u_1=a$, $u_k=b$, and $u_i \leq u_{i+1}$ being a cover relation of $P$ for each 
$i\in \{1, \dots, k-1\}$. 
A curve in the plane that can be oriented so that it is strictly increasing on the 
$y$-coordinate is said to be {\em $y$-increasing}.
If a $y$-increasing curve $\gamma$ is completely contained in the drawing of $P$ then we call 
$\gamma$ a {\em walk} in the diagram of $P$.  
Every witnessing path for $a\leq b$ in $P$ corresponds to a walk from $a$ to $b$.
Conversely, every two elements of $P$ contained in the same walk are comparable in $P$. 
For an element $p$ of $P$, we define the \emph{$p$-line} as the horizontal line in the plane through (the image of) $p$.
When $p$ and $q$ are two elements of $P$, we say that $p$ \emph{sees} the $q$-line if there is a walk containing $p$ that intersects the $q$-line.

Let $I'_2$ be the set of all pairs $(a,b)\in I_2$ such that $a$ does not see the $b$-line. 
It turns out that the set $I_2'$ is reversible:  
If not, then $I_2'$ contains an alternating cycle $(a_1,b_1), \dots, (a_k,b_k)$.
We may assume that $b_1$ is drawn below all other $b_i$'s.
Given that $a_1$ is drawn below $b_1$ (since $(a_1,b_1)\in I_2$) and $b_2$ is drawn above $b_1$, 
any witnessing path for the relation $a_1\leq b_2$ in $P$ crosses the $b_1$-line. 
Thus $a_1$ sees the $b_1$-line, contradicting $(a_1,b_1)\in I_2'$.

Let $I_2''$ be a set of all pairs $(a,b)\in I_2$ such that $b$ does not see the $a$-line.
A dual argument to the one above shows that $I_2''$ is reversible.
Let $I_3:=I_2-(I'_2 \cup I''_2)$, that is, $I_3$ is the set of pairs $(a,b)\in I_2$ such that 
$a$ sees the $b$-line and $b$ sees the $a$-line.
We have
\[
\dim(I_1) \leq \dim(I_2) + 1 \leq \dim(I_3) + \dim(I_2') + \dim(I_2'') + 1 \leq \dim(I_3) +3.
\]
Thus, to prove Lemma~\ref{lemma:point-below-max}, it remains to partition $I_3$ into at most $6h$ reversible sets.

Given two distinct elements $p,q\in P$, we say that $p$ \emph{sees the left side (right side)} of 
$q$ if there is a walk containing $p$ and intersecting the $q$-line to the left (right) of $q$.
Note that in general $p$ could possibly see both the left and the right side of $q$.
However, this cannot happen with elements of pairs in $I_3$:

\begin{claim}\label{claim:left-and-right}
 For each $(a,b)\in I_3$, either
 \begin{enumerate}
  \item $a$ sees only the left side of $b$ and $b$ sees only the right side of $a$, or\label{item:left-right}
  \item $a$ sees only the right side of $b$ and $b$ sees only the left side of $a$.\label{item:right-left}
 \end{enumerate}
\end{claim}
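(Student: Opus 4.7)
The argument rests on a single \emph{crossing lemma} that uses planarity of the diagram essentially. Let $\gamma_1, \gamma_2$ be two $y$-increasing walks in the drawing containing $a$ and $b$ respectively. Suppose $\gamma_1$ and $\gamma_2$ share a point at a height strictly between $y_a$ and $y_b$. Since in a planar drawing two edges meet only at vertices, this shared point must be a poset element $c$. But then $a \leq c$ (via the upward portion of $\gamma_1$ from $a$ to $c$) and $c \leq b$ (via the upward portion of $\gamma_2$ from $c$ to $b$), forcing $a \leq b$ and contradicting $(a,b) \in \Inc(P)$. This observation will be the workhorse of the proof.

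The plan is then to rule out two ``forbidden'' configurations: (I) $a$ and $b$ see the same side of each other, and (II) $a$ sees both sides of $b$. Combined with the fact that $a$ sees the $b$-line and $b$ sees the $a$-line (built into $I_3$), these suffice. Indeed, if $b$ were to see both sides of $a$, then by (I) $a$ would see neither side of $b$, contradicting that $a$ sees the $b$-line; so $b$ sees a unique side of $a$, by (II) $a$ sees a unique side of $b$, and by (I) those sides are opposite, which is exactly the claimed dichotomy.

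For (I), suppose that $a$ sees the left of $b$ via a walk whose upward portion $\gamma_1$ from $a$ ends at $p$ on the $b$-line with $x_p < x_b$, and that $b$ sees the left of $a$ via a walk whose downward portion $\gamma_2$ from $b$ ends at $q$ on the $a$-line with $x_q < x_a$. In the strip $y_a \leq y \leq y_b$, each $\gamma_i$ is strictly monotone in $y$, so its $x$-coordinate is a continuous function $x_i(y)$. One has $x_2(y_a) = x_q < x_a = x_1(y_a)$ while $x_1(y_b) = x_p < x_b = x_2(y_b)$. The intermediate value theorem then forces $\gamma_1$ and $\gamma_2$ to meet at some intermediate height, violating the crossing lemma. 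The ``both right'' case is symmetric by reflection.

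For (II), let $\gamma_L, \gamma_R$ be upward portions from $a$ of two walks witnessing that $a$ sees the left and right of $b$, ending at $p_L, p_R$ on the $b$-line with $x_{p_L} < x_b < x_{p_R}$. Choosing these walks appropriately (for example, as the leftmost and rightmost $y$-increasing walks from $a$ reaching the $b$-line), the two arcs together with the segment of the $b$-line from $p_L$ through $b$ to $p_R$ bound a closed region $R$ with $b$ on its top boundary. Since $(a,b) \in I_3$, there is a walk through $b$ reaching the $a$-line; its downward portion from $b$ enters the interior of $R$ (which lies just below $b$), and since the only point of $R$ at height $y_a$ is $a$ itself (whose use would give $a \leq b$), this portion must exit $R$ before reaching height $y_a$. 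Being $y$-monotone downward, it cannot cross the top $b$-line segment, so it must meet $\gamma_L$ or $\gamma_R$, contradicting the crossing lemma. The main obstacle is precisely this topological bookkeeping in (II) --- ensuring the region $R$ is well-defined as a simple closed curve and that the descending walk has no escape other than through $\gamma_L \cup \gamma_R$; it is also where the $I_3$ hypothesis that $b$ sees the $a$-line is used essentially.
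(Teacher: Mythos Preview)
Your proof is correct and rests on the same core observation as the paper's: two walks through $a$ and $b$ respectively that cross between heights $y_a$ and $y_b$ force $a\le b$ by planarity. Your case~(I) is precisely the argument the paper gives.

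The difference is that your case~(II) is unnecessary. Once~(I) is established, (II) follows immediately from the $I_3$ hypothesis: if $a$ saw both sides of $b$, then since $b$ sees some side of $a$ (because $(a,b)\in I_3$), say the left, we would have $a$ seeing the left of $b$ and $b$ seeing the left of $a$, contradicting~(I). This is exactly how the paper proceeds: assuming $a$ sees the right of $b$, it first uses the crossing argument to rule out $b$ seeing the right of $a$, so $b$ sees only the left of $a$; then the ``analogue argument'' it invokes is simply~(I) again in the both-left flavour, ruling out $a$ seeing the left of $b$. So the paper needs only two invocations of your crossing lemma and no region construction. Your region argument for~(II) is valid (with the topological care you flag), but it is working harder than required.
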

\begin{figure}[h]
	\centering
	\includegraphics[scale=1.0]{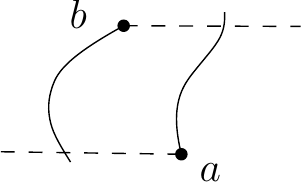}
	\caption{\label{fig:claim-left-and-right}An incomparable pair $(a,b)$ satisfying Claim~\ref{claim:left-and-right}\ref{item:right-left}.}
\end{figure}
\begin{proof}
Let  $(a,b)\in I_3$.
Thus $a$ sees the $b$-line and $b$ sees the $a$-line. 
We may assume without loss of generality that $a$ sees the right side of $b$ (see Figure~\ref{fig:claim-left-and-right}), the other case being symmetric.
Let $\gamma_a$ be a walk witnessing the fact that $a$ sees the right side of $b$.
First, we show that $b$ cannot see the right side of $a$.
Suppose for a contradiction that it does, and let $\gamma_b$ be a walk witnessing this.
Clearly, $\gamma_a$ and $\gamma_b$ intersect.
Since the drawing of the diagram of $P$ under consideration is planar, the 
intersection of  $\gamma_a$ and $\gamma_b$ contains the image of
an element $p\in P$. 
Since there is a walk containing $a$ and $p$, we see that $a\leq p$ in $P$, and similarly  $p\leq b$ in $P$ since there is a walk containing $p$ and $b$.
We conclude that $a\leq b$ in $P$,  a contradiction.
Thus, $b$ cannot see the right side of $a$, and hence only sees its left side.

Now, if $a$ sees both the right side and the left side of $b$, we conclude again that $a\leq b$ in $P$ with an analogue argument, which is a contradiction. 
Hence, $a$ sees only the right side of $b$. 
This completes the proof of the claim.
\end{proof}

We partition the set of pairs $(a,b) \in I_3$ into two sets $I'_3$ and $I''_3$, depending on 
whether $(a,b)$ satisfies~\ref{item:left-right} or~\ref{item:right-left} in the claim above. 
It is enough to show that we can partition one of the two sets into at most $3h$ reversible sets, as we would obtain the same result for the other set by symmetric arguments (that is, by exchanging the notion of left and right).
We focus on the set $I''_3$, and thus aim to prove that $\dim(I''_3)\leq 3h$.
For convenience, let $I_4 := I''_3$.

\begin{figure}[b]
	\centering
	\includegraphics[scale=1.0]{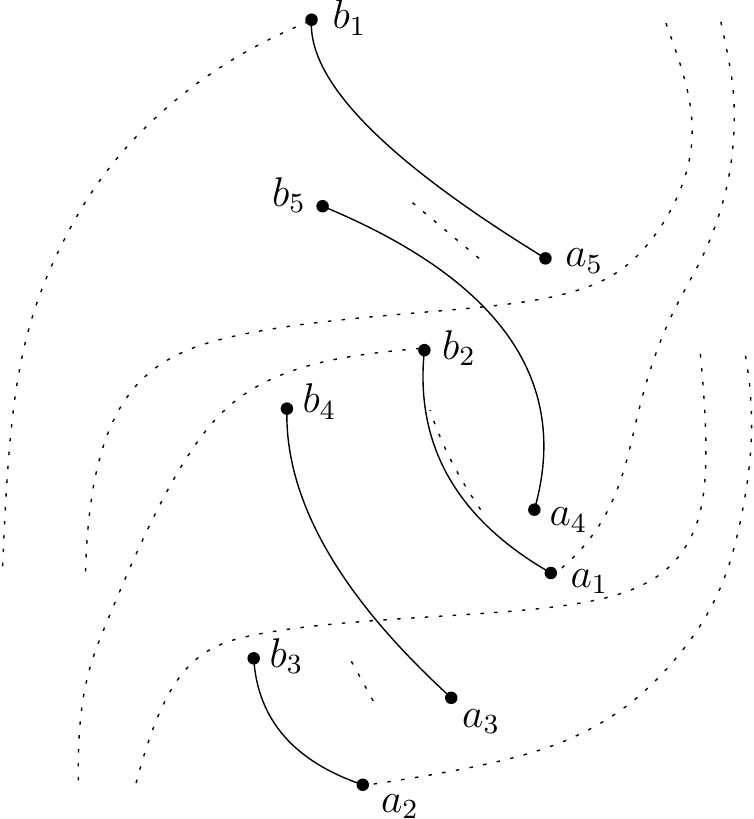}
	\caption{\label{fig:alternating-cycle-example}An alternating cycle of size $5$ with all pairs in $I_4^{\text{sep}}$.
	Solid lines are walks witnessing comparabilities of the cycle.
	Dotted lines are walks witnessing that for all $i$: (1) $a_i$ sees the right side of $b_i$; (2) $b_i$ sees the left side of $a_i$; (3) the pair $(a_i,b_i)$ is separated.}
\end{figure}

We are going to partition pairs in $I_4$ according to whether they admit a `separator': Say that a walk $\sigma$ is a \emph{separator} for a pair $(a,b) \in I_4$ if $\sigma$ starts on the $a$-line to the left of $a$, ends on the $b$-line to the right of $b$, and every element of $P$ that appears on $\sigma$ is incomparable to both $a$ and $b$. 
Then let $I_4^{\text{sep}}$ be the set of pairs in $I_4$ having a separator, and let  $I_4^{\text{no-sep}}$ be the set of pairs in $I_4$ with no separator. 
See Figure~\ref{fig:alternating-cycle-example} for an example of an alternating cycle with all pairs in $I_4^{\text{sep}}$. 

Say that an incomparable pair $(a,b) \in I_4$ is {\em dangerous} if $a$ is drawn below $x_0$ and $a$ sees the left side of $x_0$. 
Note if $(a,b)$ is {\em not} dangerous, then either $a$ is drawn above $x_0$, or $a$ is drawn below $x_0$ but then $a$ only sees the right side of $x_0$ (recall that there is walk from $a$ to the $b$-line). 

We consider dangerous and non-dangerous pairs in $I_4^{\text{sep}}$ and $I_4^{\text{no-sep}}$.  
This defines four subsets of incomparable pairs. 
A first observation is that one of these sets is empty:   

\begin{claim}
There are no dangerous pairs in $I_{4}^{\text{sep}}$. 
\end{claim}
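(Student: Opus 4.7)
The plan is to argue by contradiction: I will assume there exists a dangerous pair $(a,b) \in I_4^{\text{sep}}$, and fix a separator $\sigma$ for it, starting at some point $s$ on the $a$-line to the left of $a$ and ending at some point $e$ on the $b$-line to the right of $b$. In parallel I will fix a walk $\pi$ witnessing the relation $x_0 \leq b$ in $P$, and a walk $\tau$ through $a$ that meets the $x_0$-line at a point $p$ to the left of $x_0$ (such a $\tau$ exists because $(a,b)$ is dangerous, and I can truncate it so that its portion from $a$ to $p$ is $y$-increasing). A short book-keeping step will record that $y_a < y_{x_0} < y_b$, and that $x_0$ is not on $\sigma$ (since $x_0 \leq b$ makes $x_0$ comparable to $b$), so the unique crossing $q$ of $\sigma$ with the $x_0$-line is distinct from $x_0$.

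The topological core of the argument will be to extend $\sigma$ to an unbounded simple curve $\tilde\sigma$ by attaching the leftward horizontal ray from $s$ along the $a$-line and the rightward horizontal ray from $e$ along the $b$-line. Because $\sigma$ is strictly $y$-increasing, $\tilde\sigma$ will be simple and will split the plane into two open regions $L$ and $R$; a quick local inspection near $s$ and near $e$ places $a$ in $R$ and $b$ in $L$, and every horizontal line at height $y \in (y_a, y_b)$ meets $\tilde\sigma$ at exactly one point, lying on $\sigma$. I will now split into cases on the position of $q$ relative to $x_0$. If $q$ lies to the left of $x_0$, then $x_0 \in R$ and $b \in L$, so $\pi$ must cross $\tilde\sigma$; the $y$-range of $\pi$ is contained in $[y_{x_0}, y_b]$ and at the $b$-line $\pi$ only touches $b$ (which lies to the left of $e$, off the $b$-line ray), so $\pi$ must in fact meet $\sigma$ itself. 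If $q$ lies to the right of $x_0$, then $p$ also lies to the left of $q$, so $p \in L$ while $a \in R$; the $y$-increasing sub-walk of $\tau$ from $a$ to $p$ has $y$-range $[y_a, y_{x_0}]$, and an analogous endpoint check will show this sub-walk meets $\sigma$.

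To finish each case I will invoke planarity: two walks that share a point in the plane must share a vertex of $P$, because the only way for a walk to leave an edge of the cover graph is through a vertex, so if the walks overlap on an interior portion of an edge they share one of its endpoints. In Case 1 the common vertex $v \in \pi \cap \sigma$ lies on $\pi$ between $x_0$ and $b$, giving $v \leq b$ in $P$, which contradicts the fact that $v$ is incomparable to $b$ by the separator property. In Case 2 the common vertex $v \in \tau \cap \sigma$ lies on $\tau$ together with $a$, giving $v$ comparable to $a$ in $P$, which contradicts the fact that $v$ is incomparable to $a$. The step I expect to require the most care is the topological setup — verifying that $\tilde\sigma$ is genuinely simple, correctly placing $a, b, x_0, p$ in the regions $L$ and $R$, and confirming that a crossing of $\pi$ or $\tau$ with $\tilde\sigma$ lands on $\sigma$ rather than on the auxiliary horizontal rays — all of which should reduce to careful tracking of $y$-coordinates and exploiting the strict $y$-monotonicity of $\sigma$.
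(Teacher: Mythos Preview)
Your argument is correct and is essentially the same as the paper's. The only organizational difference is that the paper avoids your case split: it first uses the dangerous walk (your $\tau$) to conclude directly that $x_0$ lies to the right of $\sigma$ (ruling out your Case~2), and then runs your Case~1 argument with the walk $\pi$ witnessing $x_0 \leq b$.
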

\begin{proof}
Arguing by contradiction, suppose that $(a,b)\in I_{4}^{\text{sep}}$ is dangerous. 
Let $\sigma$ denote a walk that separates the pair. 
Thus $a$ is to the right of $\sigma$ and $b$ is to its left. 
Since $a$ is drawn below $x_0$, the walk $\sigma$ intersects the $x_0$-line. 
Consider a walk $\beta$ from $a$ to the $x_0$-line with $x_0$ to its right, which exists since $(a,b)$ is dangerous. 
By the definition of a separator, $\beta$ cannot intersect $\sigma$, and hence the top endpoint of $\beta$ is to right of $\sigma$. 
It follows that $x_0$ is to the right of $\sigma$. 
However, this implies that any walk witnessing the relation $x_0 \leq b$ in $P$ must intersect $\sigma$, since $b$ is to the left of $\sigma$. 
This contradicts the fact that $\sigma$ separates the pair $(a,b)$.   
\end{proof}

The plan for the rest of the proof is to partition into at most $h$ reversible sets each of the remaining three sets of incomparable pairs, i.e.\ non-dangerous pairs in $I_4^{\text{sep}}$, non-dangerous pairs in $I_4^{\text{no-sep}}$, and dangerous pairs in $I_4^{\text{no-sep}}$. 
Altogether, this proves that $\dim(I_4) \leq 3h$, as desired. 
To do so, we first make a little detour: In the next few pages we introduce the key notion of a `wall' and establish several useful properties of walls. 
These walls will help us getting a better understanding of strict alternating cycles in $I_4^{\text{sep}}$ and in $I_4^{\text{no-sep}}$, which in turn will help us to partition the three sets mentioned above into at most $h$ reversible sets each. 

\subsection{Walls}

Given a point $p$ in the plane and a walk $\gamma$ that intersects the $p$-line in a point $q$ 
distinct from $p$, we say that $p$ is {\em to the left (right) of $\gamma$} if $p$ is to the left 
(right, respectively) of $q$ on the $p$-line. 
A set $W$ of walks is a \emph{wall} if there is a walk $\gamma \in W$ such that every walk $\gamma' \in W$ distinct from $\gamma$ has the property that its topmost point is to the right of some walk in $W$.
Note that in this case the walk $\gamma$ is uniquely defined; we call it the {\em root} of the wall $W$.
Observe also that the topmost point of the root walk has the maximum $y$-coordinate among all points in walks of $W$.
We begin by showing an easy property of walls.

\begin{claim}\label{claim:topology-of-the-wall}
Let $W$ be a wall and let $\gamma$ be a walk that is disjoint from every walk in $W$. 
If the bottommost point of $\gamma$ is to the right of some walk in $W$,
then every point of $\gamma$ is either to the right of some walk in $W$,
or above all the walks in $W$.
\end{claim}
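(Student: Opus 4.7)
My plan is a contradiction argument using continuity and the wall structure to show that ``goodness'' (being to the right of some walk in $W$, or above all walks) propagates upwards along $\gamma$. I would parametrize each walk $\delta\in W$ and $\gamma$ itself by $y$-coordinate, writing $\delta(y)$ for the $x$-coordinate of the unique point of $\delta$ at height $y$; this is well-defined since each walk is $y$-increasing. Let $y_{\max}$ denote the maximum $y$-coordinate appearing on any walk in $W$, attained at the topmost point of the root of $W$; then ``above all walks in $W$'' just means $y>y_{\max}$.

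Suppose for contradiction that the set $B$ of $y$-values at which $\gamma(y)$ is bad is non-empty, and let $y^*=\inf B$; note $y^*$ lies between the $y$-coordinate of $\gamma$'s bottommost point and $y_{\max}$. I would first show $\gamma(y^*)$ is good. If $y^*$ equals the $y$-coordinate of $\gamma$'s bottommost point, this is immediate from the hypothesis. Otherwise, for every $y<y^*$ in $\gamma$'s range we have $y<y_{\max}$, so $\gamma(y)$ is not above all walks and must be to the right of some walk. As $W$ is finite, pigeonhole yields a single walk $\delta\in W$ and a sequence $y_n\nearrow y^*$ with each $y_n$ in the $y$-range of $\delta$ and $\delta(y_n)<\gamma(y_n)$. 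Passing to the limit, $y^*$ is still in the $y$-range of $\delta$ and $\delta(y^*)\leq\gamma(y^*)$; disjointness of $\gamma$ from $\delta$ forces the inequality to be strict, so $\gamma(y^*)$ is to the right of $\delta$.

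It remains to extend this goodness into a right-neighborhood of $y^*$, which contradicts $y^*=\inf B$. Continuity handles this whenever $y^*$ is strictly below the $y$-coordinate of $\delta$'s topmost point; the subtle case is equality. To handle it, I would refine the choice of $\delta$ by picking one that minimizes $\delta(y^*)$ over all walks in $W$ to the right of which $\gamma(y^*)$ lies. If this refined $\delta$ is not the root, the wall property provides a walk $\delta'\in W$ such that $\delta$'s topmost point is to the right of $\delta'$; in particular, the $y$-range of $\delta'$ contains $y^*$ and $\delta'(y^*)<\delta(y^*)$, whence $\gamma(y^*)$ is also to the right of $\delta'$, contradicting minimality. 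Thus in the boundary case $\delta$ must be the root, which forces $y^*=y_{\max}$, and then every $y>y^*$ is trivially above all walks. In every case $\gamma$ is good on a right-neighborhood of $y^*$, yielding the desired contradiction. The main obstacle is precisely this boundary subtlety at $y^*$ equal to some walk's topmost height---the wall structure is designed exactly to defeat it.
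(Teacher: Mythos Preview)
Your proposal is correct and takes essentially the same approach as the paper. The paper argues by ``walking along $\gamma$'' from bottom to top, staying to the right of some walk $\delta\in W$ until $\delta$ ends, at which point the wall property supplies a replacement walk (or $\delta$ is the root and we are above everything); your infimum-and-continuity argument is a formalization of exactly this chaining idea, with the minimality refinement playing the role of ``switch to the next walk to the left''.
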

\begin{figure}[h]
	\centering
	\includegraphics[scale=1.0]{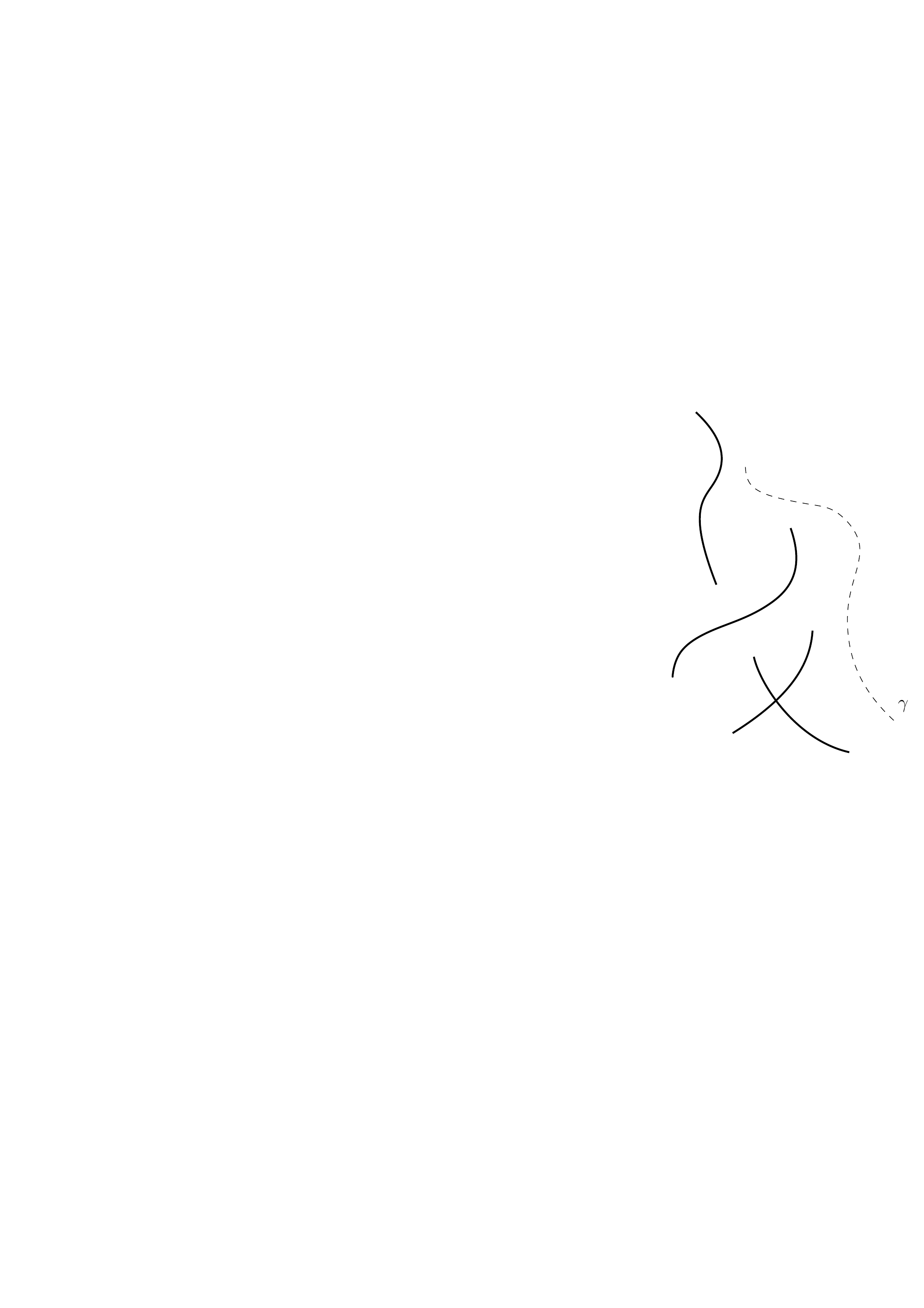}
	\caption{\label{fig:wall-and-a-curve} A wall (thick curves) and a walk $\gamma$ that is disjoint from every walk in the wall.}
\end{figure}

\begin{proof}
Assume that the bottommost point of $\gamma$ is to the right of some walk in $W$.
We are going to walk along $\gamma$ starting at its bottommost point and going upwards. 
We claim that at all times the current point satisfies the property that it is either to the right of some walk in $W$, or above all the walks in $W$.
This is true at the beginning, as by our assumption the bottommost point of $\gamma$ is to the right of some walk, say $\delta$, in $W$.
Since $\gamma$ does not intersect $\delta$, walking along $\gamma$ we have the curve $\delta$ to 
the left, until one of the two curves ends. 
If $\gamma$ stops first then every point of $\gamma$ is to the right of some walk in $W$, as desired.
If $\delta$ stops first, then either $\delta$ is the root walk of $W$ and therefore all points of $\gamma$ above the current point are above all walks in $W$, 
or $\delta$ is not the root walk and by definition of a wall there is another walk $\delta'$ in 
$W$ to the left of $\delta$'s topmost point, and hence to the left of the current point of $\gamma$.
Continuing in this way, we see that every point of $\gamma$ has the desired property.
This completes the proof.
\end{proof}

Consider a strict alternating cycle $C$ in $I_4$ consisting of the pairs $(a_1,b_1), \dots, (a_k,b_k)$ such that $b_1$ is drawn above all other $b_i$'s. 
For $i \in \{2, \dots, k\}$, we say that a wall $W$ is an \emph{$i$-wall for the cycle $C$} if 
for each element $p \in P$ that is included in some walk of $W$, there exists $\ell \in \{1, \dots, i-1\}$ such that $a_{\ell} \leq p$ in $P$ or $p \leq b_{\ell+1}$ in $P$.
Note that if $W$ is an $i$-wall for $C$ then $W$ is also a $j$-wall for $C$, for every $j\in\{i+1,\ldots,k\}$. 

Say that a point $p$ is {\em to the left of a wall $W$} if $p$ is to the left of all walks in $W$ intersecting the $p$-line, and there is at least one such walk.

\begin{claim}\label{claim:i-wall}
Let $C$ denote a strict alternating cycle $(a_1,b_1), \dots, (a_k,b_k)$ in $I_4$ such that $b_1$ is drawn topmost among the $b_i$'s.
If $W$ is an $i$-wall for $C$ for some $i \in \{2, \dots, k\}$ and $b_1$ is to the left of $W$, then
none of $a_i, \dots, a_k$, $b_{i}, \dots, b_k$ is to the right of some walk in $W$ (see Figure~\ref{fig:wrong-alternating-cycles}).
\end{claim}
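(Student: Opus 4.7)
I argue by contradiction. Suppose some $p \in S := \{a_i, \ldots, a_k, b_i, \ldots, b_k\}$ is to the right of some walk in $W$, and choose such a $p$ with maximum $y$-coordinate. The preparatory step is to show that, for every $j \in \{i, \ldots, k\}$, a walk $\alpha_j$ in the diagram witnessing $a_j \leq b_{j+1}$ in $P$ (with the convention $b_{k+1} := b_1$) is disjoint from every walk of $W$. If an element $q \in P$ were common to $\alpha_j$ and some walk of $W$, then $a_j \leq q \leq b_{j+1}$ together with the $i$-wall condition ($a_\ell \leq q$ or $q \leq b_{\ell+1}$ for some $\ell < i$) would yield $a_\ell \leq b_{j+1}$ or $a_j \leq b_{\ell+1}$; the strict alternating cycle property then forces $\ell = j$, contradicting $\ell < i \leq j$. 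The same argument shows that $\alpha_{j-1}$ is disjoint from every walk of $W$ whenever $j > i$.

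If $p = a_j$, I would apply Claim~\ref{claim:topology-of-the-wall} to $\alpha_j$: since the bottom $a_j$ is to the right of some walk of $W$, the top $b_{j+1}$ is either to the right of some walk of $W$ or above all walks of $W$. For $j = k$ this contradicts the hypothesis that $b_1$ is to the left of $W$ (which requires the $b_1$-line to meet $W$, ruling out ``above'', and requires $b_1$ to lie strictly to the left of every walk meeting the $b_1$-line, ruling out ``right''). For $j < k$, the element $b_{j+1}$ is drawn strictly below $b_1$, and because the walks of $W$ reach up to at least $y(b_1)$, the ``above'' option is impossible; hence $b_{j+1}$ is to the right of some walk of $W$. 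But then $b_{j+1} \in S$ with $y(b_{j+1}) > y(a_j) = y(p)$, contradicting the maximality of $p$.

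The subcase $p = b_j$ is the principal obstacle. Since $b_j$ is maximal, no walk ascends above it to an element of strictly higher $y$-coordinate, so the ``escape to a higher element'' trick of the previous case is unavailable. The plan is to descend instead along $\alpha_{j-1}$, from $b_j$ down to $a_{j-1}$ (disjoint from every walk of $W$ for $j > i$). Since $b_j$ lies to the right of some walk $\delta \in W$ and $\alpha_{j-1}$ does not meet $\delta$, planarity and continuity keep $\alpha_{j-1}$ on the right side of $\delta$ throughout $\delta$'s $y$-range. If $a_{j-1}$ is in that range then $a_{j-1}$ is a new element of $S$ to the right of $W$, and I would convert this into a contradiction either with the extremality of $p$ or with $b_1$ being to the left of $W$. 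The delicate situation is when $\alpha_{j-1}$ leaves $\delta$'s $y$-range from below: one has to chase the curve past $\delta$'s bottom and invoke the wall condition on tops of non-root walks of $W$, together with a Jordan-curve argument, to force $\alpha_{j-1}$ to meet another walk of $W$ and contradict disjointness. The boundary case $j = i$, where $\alpha_{i-1}$ is not guaranteed to be disjoint from $W$, requires its own treatment, using directly that the root of $W$ reaches the $b_1$-line while $b_i$ is drawn strictly below $b_1$. The heart of the technical difficulty is that the wall structure controls only the tops of walks of $W$, so managing the descent of $\alpha_{j-1}$ below the bottom of $\delta$ is where the real work of the proof lies.
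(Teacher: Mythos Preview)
Your treatment of the case $p=a_j$ is correct and essentially matches the paper's argument. The gap is in the case $p=b_j$. Your plan there is to descend along the comparability walk $\alpha_{j-1}$ from $b_j$ down to $a_{j-1}$, but this runs into real trouble: even when $j>i$ and $\alpha_{j-1}$ is disjoint from $W$, learning that $a_{j-1}$ is to the right of some walk of $W$ only lets you climb back up $\alpha_{j-1}$ to $b_j$ via Claim~\ref{claim:topology-of-the-wall}, which is circular and gives no new information. And when $a_{j-1}$ drops below the $y$-range of the walk $\delta$ you started from, the wall axioms say nothing about bottoms of walks, so there is no Jordan-curve argument to force an intersection. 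The boundary case $j=i$ is worse still, since $\alpha_{i-1}$ need not be disjoint from $W$ at all.

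The paper avoids this entirely by not descending along a comparability walk. Instead it uses the defining property of $I_4$: since $(a_j,b_j)\in I_4$, the element $b_j$ sees the \emph{left} side of $a_j$, so there is a walk $\beta$ from $b_j$ down to the $a_j$-line hitting it to the left of $a_j$. The trick is to \emph{enlarge the wall}: set $W':=W\cup\{\beta\}$. Because the top of $\beta$ is $b_j$, which lies to the right of some walk of $W$, the set $W'$ is again a wall; every element on $\beta$ is $\leq b_j$ in $P$, so $W'$ is still a $j$-wall for $C$; and $b_1$ is still to the left of $W'$ since $\beta$ does not reach the $b_1$-line. Now $a_j$ is to the right of $\beta\in W'$, and you are back in the $a$-case with the new wall $W'$. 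Iterating (i) and this step alternately pushes the index forward until $b_{k+1}=b_1$ is to the right of some wall, contradicting the hypothesis. The missing idea is precisely this wall-augmentation using the $I_4$ ``sees the left side'' walk, rather than a comparability walk.
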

\begin{figure}[h]
	\centering
	\includegraphics[scale=1.0]{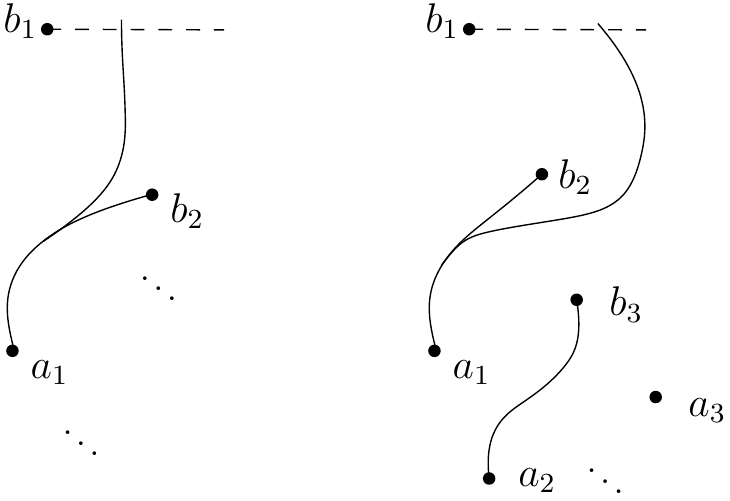}
	\caption{\label{fig:wrong-alternating-cycles}Some situations that are forbidden by Claim~\ref{claim:i-wall}. Left: $b_2$ is `blocked' by a $1$-wall. Right: $a_3$ is blocked by a $2$-wall.}
\end{figure}

\begin{proof}
We will prove the following two implications:
\begin{enumerate}
\item 
If $W$ is an $i$-wall for $C$ with $i \in \{2, \dots, k\}$ such that $b_1$ is to the left of $W$ and $a_i$ to the right of some walk in $W$, then $b_{i+1}$ is also to the right of some walk in $W$. 
(Indices are taken cyclically, as always.)  \\
\item 
If $W$ is an $i$-wall for $C$ with $i \in \{2, \dots, k\}$ such that $b_1$ is to the left of $W$ and $b_i$ is to the right of some walk in $W$, then there is an $i$-wall $W'$ for $C$ with $b_1$ to the left of $W'$ and $a_{i}$ to the right of some walk in $W'$.
\end{enumerate}
Note that these two statements together imply that if at least
one of $a_i, \dots, a_k$, $b_{i}, \dots, b_k$ is to the right of a walk from
an $i$-wall $W$ for $C$ with $i \in \{2, \dots, k\}$ and with $b_1$ to the left of $W$,
then there is a $k$-wall $W'$ for $C$ with $a_k$ to the right of one of its walks and with $b_1$ to the left of $W'$. 
Then applying statement (i) again we get that $b_1$ lies also to the right of some walk of $W'$, which is not possible.
Hence, to establish our observation, it is enough to prove these two statements, which
we do now.

For the proof of the first implication, suppose that we have an $i$-wall $W$ for $C$ with $i \in \{2, \dots, k\}$ and with $b_1$ to the left of $W$ and $a_i$ to the right of some walk in $W$.
Consider a walk $\gamma_i$ witnessing the relation $a_i \leq b_{i+1}$ in $P$.
Note that $\gamma_i$ cannot intersect any walk in $W$.
Indeed, otherwise their intersection would have some element $p$ of $P$ in common (by the planarity of the diagram), and we would have $a_i \leq p \leq b_{i+1}$ in $P$, which together with an extra comparability of the form $a_{\ell} \leq p$ in $P$ or $p \leq b_{\ell+1}$ in $P$ for some $\ell \in \{1, \dots, i-1\}$ contradicts the fact that $C$ is a strict alternating cycle.
Since $a_i$ is to the right of some walk in $W$, by Claim~\ref{claim:topology-of-the-wall} we conclude that the topmost point of $\gamma_i$, namely $b_{i+1}$, is to the right of some walk in $W$, as desired.
(Here we used that $b_{i+1}$ is not above $b_1$ in the drawing;
note that this argument applies even in the special case $i=k$.)

For the proof of the second implication, suppose that we have an $i$-wall $W$ for $C$ with $i \in \set{2, \dots, k}$ with $b_1$ to the left of $W$ and $b_i$ to the right of some walk in $W$.
Recall that $(a_{i},b_{i}) \in I_4$, so there is a walk $\beta$ from $b_{i}$ going downwards to the $a_{i}$-line and intersecting it to the left of $a_{i}$.
Let $W' := W \cup \set{\beta}$.
Since $b_{i}$ is the topmost point of $\beta$, given that $W$ is an $i$-wall for $C$ it should be clear that $W'$ is also an $i$-wall for $C$ and that $a_i$ is to the right of $\beta \in W'$, as desired.
This concludes the proof.
\end{proof}

\subsection{Alternating cycles with special pairs}

Our next goal is to use walls and their properties to show that each strict alternating cycle in $I_4^{\text{no-sep}}$ and each strict alternating cycle in $I_4^{\text{sep}}$ has at least one `special pair':  
A pair $(a_j,b_j)$ of a strict alternating cycle $(a_1,b_1), \dots, (a_k,b_k)$ is said to be {\em special} if 
\begin{enumerate}
\item $a_j$ is drawn below $a_{j+1}$ and $b_{j+1}$ is drawn below $b_j$; \label{item:above}
\item $a_{j+1}$ is to the right of every walk from $a_j$ to $b_{j+1}$, and to the left of every walk from $a_j$ to the $b_j$-line.\label{item:placement-of-aj+1}
\end{enumerate} 
(Indices are taken cyclically.) 
See Figure~\ref{fig:claim-alternating-cycles} for an illustration.

\begin{figure}[h]
\centering
\includegraphics[scale=1.0]{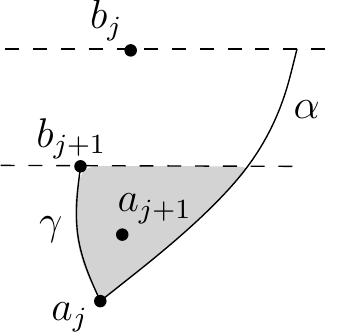}
\caption{\label{fig:claim-alternating-cycles}A special pair $(a_j,b_j)$.}
\end{figure}

First we show that strict alternating cycles in $I_4^{\text{no-sep}}$ have a special pair.

\begin{claim}\label{claim:special-pairs-no-sep}
 Let $C$ denote a strict alternating cycle $(a_1,b_1), \dots, (a_k,b_k)$  in $I_4^{\text{no-sep}}$ with $b_1$ drawn above all other $b_i$'s. 
Then the pair $(a_1,b_1)$ is special.
\end{claim}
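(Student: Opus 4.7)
I would argue by contradiction: assume $(a_1, b_1)$ is not special and construct a separator for it, contradicting $(a_1, b_1) \in I_4^{\text{no-sep}}$. Since $b_1$ is drawn topmost, $b_2$ lies below $b_1$ automatically, so non-specialness reduces to one of three configurations: (A) $a_1$ is drawn above $a_2$; (B) some walk from $a_1$ to $b_2$ has $a_2$ on it or strictly to its left; or (C) some walk from $a_1$ to the $b_1$-line has $a_2$ on it or strictly to its right.

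The raw material consists of the walks $\alpha_i$ (from $a_i$ up to the right of $b_i$ on the $b_i$-line) and $\beta_i$ (from $b_i$ down to the left of $a_i$ on the $a_i$-line), which exist for every $i$ since $(a_i, b_i) \in I_4$, together with the walks $\gamma_i$ witnessing $a_i \le b_{i+1}$ in the cycle. Strictness of the alternating cycle is crucial here: for $2 \le i \le k-1$, every element on $\gamma_i$ is incomparable to both $a_1$ and $b_1$ (otherwise an extra $a_1 \le b_{i+1}$ or $a_i \le b_1$ relation forbidden by strictness would arise), and similarly elements on $\alpha_i$ for $i \ne k$ are not $\le b_1$, while elements on $\beta_i$ for $i \ne 2$ are not $\ge a_1$. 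Most pieces are thus ``safe'' for inclusion in the separator $\sigma$.

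For case (A), since $a_2$ lies below $a_1$ and $b_2$ lies above $a_1$ (because the walk $\gamma_1$ witnesses $a_1 \le b_2$ and is $y$-increasing), the walk $\alpha_2$ necessarily crosses the $a_1$-line. Applying Claim~\ref{claim:topology-of-the-wall} with the one-walk wall $\{\alpha_1\}$ forces this crossing to lie strictly to the left of $a_1$: otherwise $\alpha_2$ would intersect $\alpha_1$ in a common element of $P$, which would be both $\ge a_1$ and $\ge a_2$ and would propagate upward to yield the forbidden comparability $a_1 \le b_1$. This gives the initial stretch of $\sigma$. Continuing $\alpha_2$ past the $b_2$-line and concatenating with $\gamma_3, \alpha_3, \gamma_4, \ldots$ (arranged so that successive pieces meet at a shared element and $\sigma$ remains $y$-increasing), Claim~\ref{claim:i-wall} applied to a $k$-wall assembled from $\beta_1$ together with the $\beta_i$'s of the cycle forces $\sigma$ to eventually exit on the $b_1$-line strictly to the right of $b_1$. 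Cases (B) and (C) follow the same blueprint, with the given walks $\gamma$ or $\delta$ replacing $\alpha_2$ in the initial segment.

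\emph{Main obstacle.} The delicate step is endpoint control, i.e., keeping the first and last crossings of $\sigma$ on the correct sides of $a_1$ and $b_1$ without touching $a_1$, $b_1$, or any element comparable to them. The wall machinery of Claims~\ref{claim:topology-of-the-wall} and~\ref{claim:i-wall} is tailored exactly for this, but each of the three sub-cases requires its own careful choice of walls. Incomparability of interior elements of $\sigma$ with $a_1$ and $b_1$, by contrast, is automatic from strictness of the alternating cycle.
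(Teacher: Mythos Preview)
Your overall strategy—argue by contradiction and exhibit a separator for some pair of the cycle—is the right one, but the separator you try to build does not exist as described, and the pair you target is the wrong one.

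The central gap is the ``concatenation'' step. A separator must be a single walk, i.e.\ one $y$-increasing curve contained in the diagram. The pieces $\alpha_2, \gamma_3, \alpha_3, \gamma_4, \dots$ do \emph{not} share endpoints, and in fact cannot share any element of $P$: if $p$ lay on both $\alpha_2$ and $\gamma_3$ then $a_2 \le p \le b_4$, giving $a_2 \le b_4$, which strictness forbids (for $k\ge 4$; similar obstructions arise for any two consecutive pieces). So these walks cannot be stitched into a single $y$-increasing curve. Your parenthetical ``arranged so that successive pieces meet at a shared element'' is precisely what cannot be arranged. A second, related problem: even granting some walk through these elements, you have not ruled out that an element on $\alpha_i$ is $\ge a_1$ in $P$ (a common upper bound of $a_1$ and $a_i$ violates nothing in a strict alternating cycle), so the incomparability-to-$a_1$ condition for a separator of $(a_1,b_1)$ is unverified. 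Your claim that the $\alpha_2$–crossing of the $a_1$-line must lie to the left of $a_1$ is likewise unjustified: a common upper bound of $a_1,a_2$ does not ``propagate upward'' to $a_1\le b_1$.

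The paper avoids all of this by \emph{not} aiming at $(a_1,b_1)$. It fixes one walk $\gamma$ from $a_1$ to $b_2$ and one walk $\alpha$ from $a_1$ to the $b_1$-line, uses them to cut the plane into three regions, and then—via an inductively built wall $\{\beta_2,\dots,\beta_i\}$ made of the ``$b_i$ sees the left of $a_i$'' walks—traps $b_{\ell+1}$ in the bottom region while $a_{\ell+1}$ escapes to the top region, for the first index $\ell+1$ with $a_{\ell+1}$ outside the bottom region. Then $\gamma$ itself is a separator for $(a_{\ell+1},b_{\ell+1})$: every element on $\gamma$ lies between $a_1$ and $b_2$ in $P$, and strictness (with $2\le\ell<k$) makes all such elements incomparable to $a_{\ell+1}$ and $b_{\ell+1}$ automatically. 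No concatenation is needed, and the incomparability check is a one-liner. Targeting a pair other than $(a_1,b_1)$ is the key idea you are missing.
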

\begin{proof}
Arguing by contradiction, suppose that $(a_1,b_1)$ is not special. 
 Since $b_1$ is drawn above $b_2$ and the pair  $(a_1,b_1)$ is not special, we can find a walk $\gamma$ from $a_1$ to $b_2$ witnessing the relation  $a_1 \leq b_2$ in $P$, and a walk $\alpha$ from $a_1$ to the $b_1$-line (hitting that line to the right of $b_1$) such that either
\begin{enumerateAlpha}
\item $a_2$ is to the right of $\alpha$, or \label{enum:right} 
\item $a_2$ is to the left of $\gamma$, or \label{enum:left} 
\item $a_2$ is drawn below $a_1$. \label{enum:below} 
\end{enumerateAlpha} 

Let us start with an easy consequence of Claim~\ref{claim:i-wall}.
Since $\set{\alpha}$ is a $2$-wall for $C$ with $b_1$ to its left, we obtain in particular that $a_2$ and $b_2$ do not lie to the right of $\alpha$.
This already rules out the case~\ref{enum:right}, and it remains to find a contradiction when \ref{enum:left} or \ref{enum:below} hold.

Since $b_2$ is not to the right of $\alpha$ and since $b_2$ is clearly drawn above $a_1$, the bottommost point of $\alpha$, we see that $b_2$ must be to the left of $\alpha$. 
(Note that $b_2$ cannot be on $\alpha$ itself, because $b_2$ is drawn below $b_1$ and $\alpha$ goes up to the $b_1$-line; thus, if $b_2$ were on $\alpha$, then it would follow that $b_2$ is not a maximal element of $P$.)

Now we define a partition of the plane.
Let $D$ be the curve obtained by starting in $b_2$ and going downwards along $\gamma$ until the first intersection point $r$ with $\alpha$, at which point we switch to $\alpha$ and go upwards until its topmost point, which we denote  $q$.
(Note that the intersection of $\alpha$ and $\gamma$ is not empty since $a_1$ belongs to both curves.)
Extend $D$ to the left by adding the horizontal half-line starting at $b_2$ going to the left, and to the right by adding the horizontal half-line starting at $q$ going to the right.
Note that $D$ is not self-intersecting by our previous observation.
Now let $D'$ be the horizontal half-line starting at $r$ going to the right.
The removal of $D\cup D'$ defines three regions of the plane.
We call the region consisting of all points between the $r$-line and the $q$-line that are to the right of $\alpha$ the {\em right} region.
The remaining two regions are referred to as the {\em top} and {\em bottom} regions, in the natural way (see Figure~\ref{fig:3-regions}).

\begin{figure}[h]
\centering
\includegraphics[scale=1.0]{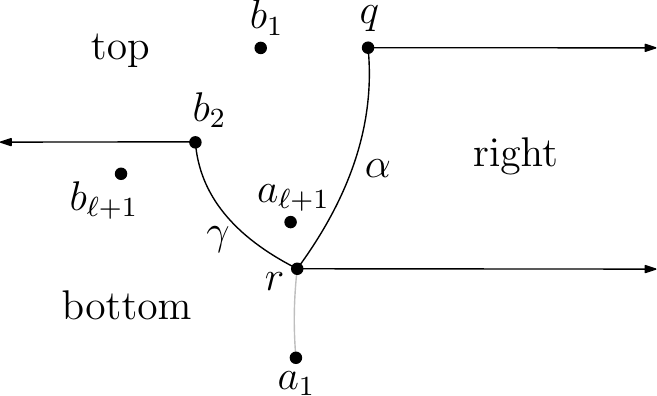}
\caption{\label{fig:3-regions} Illustration of the proof of Claim~\ref{claim:special-pairs-no-sep}. 
	The three regions and the placement of $a_{\ell+1}$, and $b_{\ell+1}$.
}
\end{figure}

Clearly, neither $a_1$ nor $b_1$ is contained in the right region (recall that our regions do not include points from $D\cup D'$).
Together with Claim~\ref{claim:i-wall} applied to the $2$-wall $\set{\alpha}$, we obtain that:
\begin{equation}
\label{eq:no-a-and-no-b-to-the-right-of-alpha}
\parbox{13cm}{
None of $a_1, \dots, a_k, b_1, \dots, b_k$ lies  in the right region.
}
\end{equation}
It follows from~\eqref{eq:no-a-and-no-b-to-the-right-of-alpha} that 
\begin{equation}
\label{eq:a2contained} 
\parbox{13cm}{
$a_2$ is contained in the bottom region
}
\end{equation}
in both cases \ref{enum:left} and \ref{enum:below}.   
We will show that this leads to a contradiction. 
(Whether we are in case \ref{enum:left} or \ref{enum:below} will not be used in the rest of the proof.) 

Let $\ell$ be the largest integer such that $2 \leq\ell\leq k$ and $a_i$ is in the bottom region for every $i\in\set{2,\ldots,\ell}$. 
Such an integer exists by \eqref{eq:a2contained}. 
For each $i\in\set{2,\ldots,\ell}$, let $\beta_i$ be a walk witnessing that $b_i$ sees the left side of $a_i$ (so $a_i$ lies to the right of $\beta_i$).
We claim that:
\begin{equation}
\parbox{0.8\textwidth}{
The set $B_i:=\set{\beta_2,\ldots,\beta_i}$ is an $i$-wall for $C$ with $\beta_2$ being its root walk,
for each $i\in\set{2,\ldots,\ell}$.
Moreover, $b_3,\ldots,b_{\ell+1}$ all lie in the bottom region.
}
\label{eq:Bi-is-an-i-wall}
\end{equation}
First we show that, if $B_i$ ($i\in\set{2,\ldots,\ell}$) is an $i$-wall for $C$ with $\beta_2$ as the root walk, then
$b_{i+1}$ is to the right of some walk in $B_i$ and lies in the bottom region.
To do so we consider a walk $\gamma_{i}$ witnessing the comparability $a_{i} \leq b_{i+1}$ in $P$.
We aim to show now that $\gamma_i$ does not intersect the curve $D$.

The walk $\gamma_{i}$ cannot intersect any walk from $B_{i}$, nor $\gamma$ nor $\alpha$.
Indeed, if  $\gamma_{i}$ did intersect one of these walks, then there would  be an element of $P$ lying in their intersection (by the planarity of the diagram), and this would imply a non-existing comparability in the strict alternating cycle $C$.
Since the bottommost point of $\gamma_i$ (i.e.\ $a_i$) is to the right of $\beta_i$, we deduce from Claim~\ref{claim:topology-of-the-wall} applied to $B_{i}$ that $\gamma_i$ cannot contain a point that is to the left of $b_2$ on the $b_2$-line (recall that $b_2$ is the topmost point of the root $\beta_2$).
Therefore, $\gamma_i$ cannot intersect the horizontal half-line starting at $b_2$ and going left.
Clearly, $\gamma_i$ is disjoint from the horizontal half-line starting at $q$ and going right (since $b_{i+1}$ is not drawn above $b_1$; in fact, $b_{i+1}$ is drawn below $b_1$ unless $i=k$, in which case it is to the left of $q$).
Thus, $\gamma_i$ does not intersect the curve $D$.

Since $a_i$ lies in the bottom region, we conclude that $b_{i+1}$ lies in the bottom or in the right region.
But by property~\eqref{eq:no-a-and-no-b-to-the-right-of-alpha} $b_{i+1}$
cannot be in the right region, and hence it is in the bottom region, as claimed.
Lastly, since $a_i$ is to the right of the walk $\beta_i \in B_i$, using Claim~\ref{claim:topology-of-the-wall} with the walk $\gamma_i$ we deduce that $b_{i+1}$ is also to the right of some walk in $B_i$, as desired.

Now we are ready to prove~\eqref{eq:Bi-is-an-i-wall} by induction on $i$.
The base case $i=2$ is immediate since $B_2=\set{\beta_2}$ is a $2$-wall for $C$.
For the inductive step, assume $i \geq 3$, and let us show that $B_i$ has the desired property.
By the induction hypothesis, $B_{i-1}$ is an $(i-1)$-wall for $C$ with $\beta_2$ as the root walk.
With the observation from the previous paragraph this implies that $b_i$ lies in the bottom region and to the right of some walk in $B_{i-1}$.
This directly yields that $B_{i}=B_{i-1}\cup \set{\beta_{i}}$ is an $i$-wall for $C$ rooted at $\beta_2$.
This concludes the proof of~\eqref{eq:Bi-is-an-i-wall}.

Observe that a corollary of~\eqref{eq:Bi-is-an-i-wall} is that $\ell < k$ (if $\ell=k$ then $b_{k+1}=b_1$ would lie in the bottom region by~\eqref{eq:Bi-is-an-i-wall}, which is clearly not the case).

We are now ready to get a final contradiction for the case under consideration, namely, that all pairs in our alternating cycle are in $I_4^{\text{no-sep}}$.
Recall that, by definition of $I_4^{\text{no-sep}}$, none of $(a_1, b_1), \dots, (a_k, b_k)$ admits a separator.
Using the properties established above, we now exhibit a separator for the pair $(a_{\ell+1}, b_{\ell+1})$, which will be the desired contradiction.

First, note that $b_{\ell+1}$ is in the bottom region (by~\eqref{eq:Bi-is-an-i-wall}) but that $a_{\ell+1}$ is not (by definition of $\ell$).
By~\eqref{eq:no-a-and-no-b-to-the-right-of-alpha}, $a_{\ell+1}$ is not in the right region either.
Since $\ell < k$, we also know that $a_{\ell+1} \neq a_1$, and hence that $a_{\ell+1}$ is not on $D\cup D'$.
This implies that $a_{\ell+1}$ is contained in the top region.
In particular, $a_{\ell+1}$ lies above the $r$-line and consequently so does $b_{\ell+1}$ (as $(a_{\ell+1},b_{\ell+1})\in I_2$).
Since $b_{\ell+1}$ is in the bottom region and is above the $r$-line, $b_{\ell+1}$ must be to the left of $\gamma$.
The element $a_{\ell+1}$, on the other hand, lies to the right of $\gamma$ as this is the only place occupied by the top region below the position of $b_{\ell+1}$.

Finally, every element of $P$ appearing on $\gamma$ is incomparable to both $a_{\ell+1}$ and $b_{\ell+1}$ in $P$, as otherwise this would imply a non-existing comparability in $C$.
This shows that the pair $(a_{\ell+1}, b_{\ell+1})$ is separated by $\gamma$, which is the contradiction we were looking for.
\end{proof}

Let us consider strict alternating cycles in $I_{4}^{\text{sep}}$ now.
We start with a useful observation about $i$-walls with respect to such cycles.
\begin{claim}\label{claim:extend_wall}
Let $C$ denote a strict alternating cycle $(a_1,b_1), \dots, (a_k,b_k)$ in $I_4^{\text{sep}}$ such that $b_1$ is drawn above all other $b_i$'s.
Let $W$ be an $i$-wall for $C$ with $i\in \{2, \dots, k\}$ such that $b_1$ is to the left of $W$ and $a_{i-1}$ belongs to some walk in $W$.
Then for each walk $\lambda$ from $a_i$ going up to some point drawn below $b_1$ such that $\lambda$ intersects the $a_{i-1}$-line and is disjoint from all walks in $W$, we have that the set $W\cup\set{\lambda}$ is an $(i+1)$-wall for $C$ with $b_1$ to its left. 
\end{claim}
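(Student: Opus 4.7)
The plan is to verify three components separately: that $W\cup\set{\lambda}$ satisfies the element-labelling condition of an $(i+1)$-wall, that $W\cup\set{\lambda}$ satisfies the structural definition of a wall, and that $b_1$ remains to the left of $W\cup\set{\lambda}$.

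The element condition is straightforward: any $p\in P$ lying on a walk of $W$ already carries a label $\ell\in\set{1,\dots,i-1}\subseteq\set{1,\dots,i}$ by the $i$-wall hypothesis, while any $p\in P$ lying on $\lambda$ satisfies $a_i\leq p$ in $P$ because $\lambda$ is a $y$-increasing walk emanating from $a_i$, so the label $\ell=i$ works. The $b_1$-to-left condition is also easy: since the topmost point $t$ of $\lambda$ is drawn below $b_1$, $\lambda$ does not meet the $b_1$-line at all, so the walks of $W\cup\set{\lambda}$ crossing the $b_1$-line coincide with those of $W$ and the hypothesis transfers directly.

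The wall structure is the substantive part. Let $\gamma^*$ denote the root of $W$. Because $b_1$ is to the left of $W$, at least one walk of $W$ meets the $b_1$-line, so the topmost point of $\gamma^*$ has $y$-coordinate at least that of $b_1$; the topmost point $t$ of $\lambda$ is strictly below $b_1$, hence strictly below $\gamma^*$'s topmost. Consequently $\gamma^*$ remains the walk of $W\cup\set{\lambda}$ with maximum topmost $y$-coordinate and is the forced candidate for root; it remains only to verify that $t$ lies to the right of some walk of $W$. To prove this, I would pick a walk $\delta\in W$ containing $a_{i-1}$ and let $q$ denote the point where $\lambda$ crosses the $a_{i-1}$-line; disjointness of $\lambda$ and $\delta$ immediately gives $q\neq a_{i-1}$. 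The plan is to argue that $q$ lies to the right of $a_{i-1}$, and hence to the right of $\delta\in W$. Once this is established, applying Claim~\ref{claim:topology-of-the-wall} to the sub-walk of $\lambda$ from $q$ up to $t$ (whose bottommost point $q$ lies to the right of $\delta\in W$) yields that every point of this sub-walk is either to the right of some walk in $W$ or above all walks in $W$. The second alternative is impossible for $t$, since $t$ sits below $b_1$ and therefore below $\gamma^*$'s topmost; hence $t$ lies to the right of some walk in $W$, as required.

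The main obstacle is proving that $q$ lies to the right of $a_{i-1}$. Because $\lambda$ is an arbitrary walk from $a_i$ satisfying the listed hypotheses, the argument cannot appeal to how $\lambda$ was built, and instead must exploit planarity of the diagram together with the disjointness of $\lambda$ from every walk of $W$ and the structural assumption that $C\subseteq I_4^{\text{sep}}$. I would attempt to rule out the configuration where $\lambda$ crosses the $a_{i-1}$-line to the left of $a_{i-1}$ by tracking the witnessing walks in $P$ associated with the comparabilities of $C$ and using the separators of pairs in $I_4^{\text{sep}}$ to force $\lambda$'s crossing onto the correct side of $\delta$.
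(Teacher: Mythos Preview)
Your outline matches the paper's proof step for step: the element-labelling and $b_1$-to-left conditions are handled exactly as you describe, and the wall-structure condition reduces precisely to showing that the crossing point $q$ lies to the right of $a_{i-1}$, after which Claim~\ref{claim:topology-of-the-wall} finishes the argument just as you indicate.

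The one step you leave as a plan is short in the paper and uses exactly the ingredients you name. Take the separator $\delta_i$ for the pair $(a_i,b_i)\in I_4^{\text{sep}}$ and a witnessing walk $\gamma_{i-1}$ for the relation $a_{i-1}\leq b_i$. By the separator property, every element on $\delta_i$ is incomparable to both $a_i$ and $b_i$, so $\lambda$ (whose elements all lie above $a_i$) and $\gamma_{i-1}$ (whose elements all lie below $b_i$) are each disjoint from $\delta_i$. Since $\lambda$ starts at $a_i$, which is to the right of $\delta_i$, the walk $\lambda$ stays entirely to the right of $\delta_i$; since $\gamma_{i-1}$ ends at $b_i$, which is to the left of $\delta_i$, the walk $\gamma_{i-1}$ stays entirely to the left. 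All three walks cross the $a_{i-1}$-line (the hypothesis that $\lambda$ reaches this line forces $a_{i-1}$ to be drawn above $a_i$, hence between the $a_i$- and $b_i$-lines), and $\gamma_{i-1}$ crosses it at $a_{i-1}$ itself; hence $q$ is to the right of $a_{i-1}$.
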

\begin{proof}
Let $\lambda$ be an arbitrary walk as in the statement of the claim and let $p$ be the intersection point of $\lambda$ with the $a_{i-1}$-line.
(See Figure~\ref{fig:wall-ext} for an illustration of the current situation and upcoming arguments.)
First, we show that $p$ is to the right of $a_{i-1}$.
Consider a walk $\delta_i$ that separates the pair $(a_i, b_i)$, and let $\gamma_{i-1}$ be a walk witnessing $a_{i-1}\leq b_i$ in $P$.
Observe that both $\lambda$ and $\gamma_{i-1}$ must be disjoint from $\delta_i$, by the definition of a separator.
Since $\gamma_{i-1}$ ends in $b_i$, thus to the left of $\delta_i$, we obtain that the whole walk $\gamma_{i-1}$ stays to the left of $\delta_i$.
Similarly, as $\lambda$ starts in $a_i$, thus to the right of $\delta_i$, we get that $\lambda$ stays to the right of $\delta_i$.
That is to say, $\delta_i$ separates $\gamma_{i-1}$ and $\lambda$ in the diagram.

Now, since all three walks intersect the $a_{i-1}$-line we conclude that $\gamma_{i-1}$ intersects the $a_{i-1}$-line to the left of $\lambda$.
In other words, $p$ lies to the right of $a_{i-1}$ on the $a_{i-1}$-line, and in particular to the right of some walk in $W$ by assumption.
Next, consider the portion $\lambda'$ of $\lambda$ that lies on or above the $p$-line.
Since $\lambda$ is disjoint from walks in $W$ and since the bottommost point of $\lambda'$ (which is $p$) is to the right of some walk in $W$, we deduce by Claim~\ref{claim:topology-of-the-wall} that the topmost point of $\lambda'$ is either to the right of some walk in $W$, or above all walks in $W$.
However, by our assumptions the latter cannot happen as the $b_1$-line is hit by some walk in $W$ but not by $\lambda$.
Therefore, the topmost point of $\lambda$ is to the right of some walk in $W$, and it follows that $W\cup\set{\lambda}$ is an $(i+1)$-wall for $C$ with $b_1$ to its left.  
\end{proof}

\begin{figure}[t]
 \centering
 \includegraphics{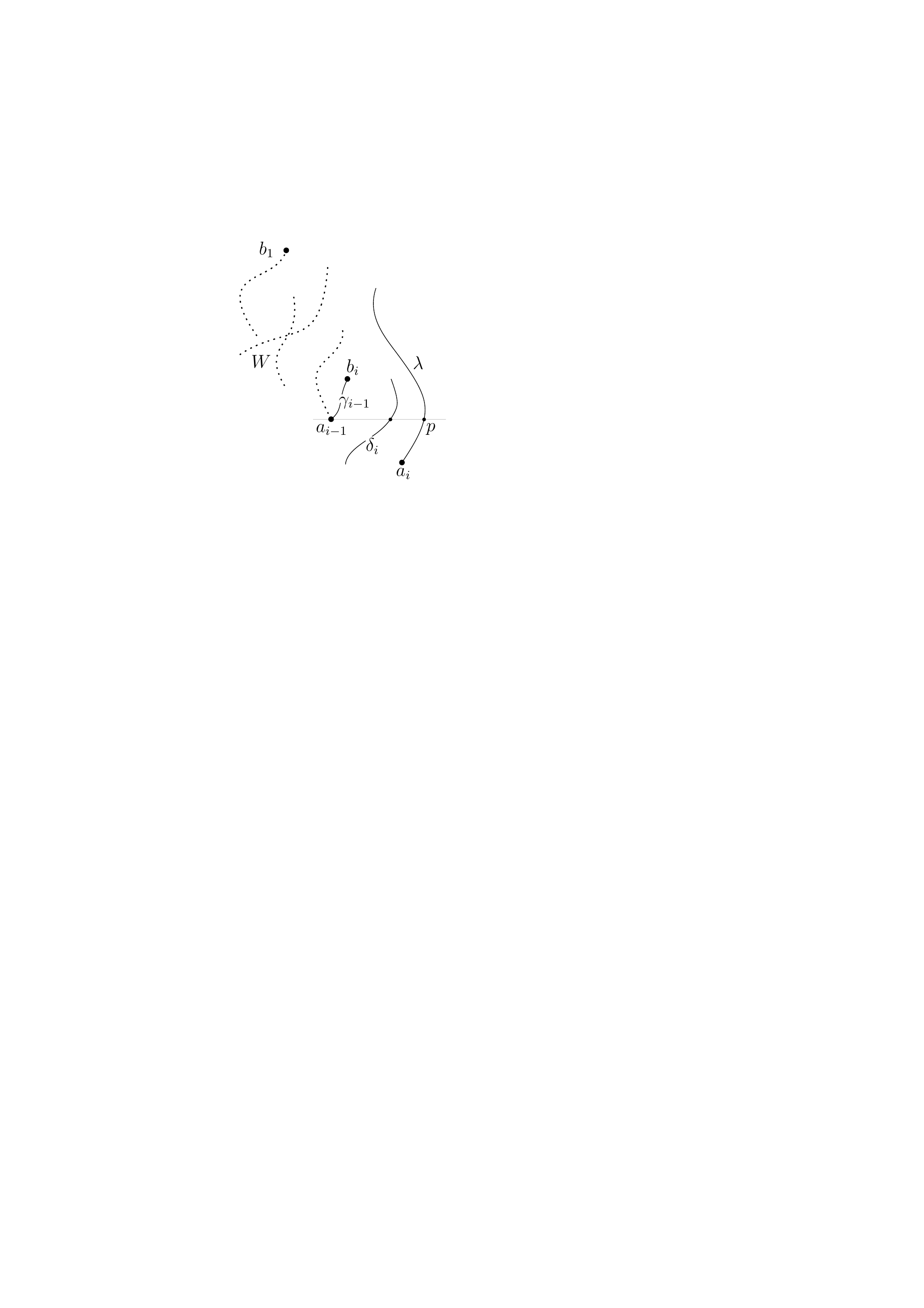}
 \caption{Situation in proof of Claim~\ref{claim:extend_wall}. Dotted lines indicate walks in $W$.}
 \label{fig:wall-ext}
\end{figure}

In the next claim we show that strict alternating cycles in $I_4^{\text{sep}}$ have a special pair.

\begin{claim}\label{claim:special-pairs-sep}
Let $C$ denote a strict alternating cycle $(a_1,b_1), \dots, (a_k,b_k)$  in $I_4^{\text{sep}}$.
Then at least one of the pairs is special. 
\end{claim}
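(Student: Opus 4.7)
The plan is to argue by contradiction: assume no pair of $C$ is special and derive a contradiction. By cyclically relabeling indices, we may assume that $b_1$ is drawn topmost among $b_1,\ldots,b_k$, so in particular $b_{i+1}$ lies below $b_1$ for every $i$. The setup thus mirrors that of the no-sep case, but since separators now exist the contradiction mechanism of Claim~\ref{claim:special-pairs-no-sep} is unavailable and a different argument is needed.

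The core strategy is to construct iteratively, for each $i\in\set{2,\ldots,k}$, an $i$-wall $W_i$ for $C$ satisfying the invariants that $b_1$ lies to the left of $W_i$ and that $a_{i-1}$ belongs to some walk in $W_i$; the machinery for extending such walls was set up precisely for this purpose in Claim~\ref{claim:extend_wall}. For the base case, let $\alpha$ be a walk from $a_1$ up to the $b_1$-line hitting it to the right of $b_1$ (such $\alpha$ exists since $(a_1,b_1)\in I_4\subseteq I_3''$ means $a_1$ sees only the right side of $b_1$); then $W_2=\set{\alpha}$ works, as every element on $\alpha$ lies above $a_1$ in $P$, yielding the wall condition with $\ell=1$. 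For the inductive step, Claim~\ref{claim:extend_wall} reduces the extension from $W_i$ to $W_{i+1}$ to producing a walk $\lambda_i$ starting at $a_i$, rising past the $a_{i-1}$-line, staying below $b_1$, and disjoint from every walk in $W_i$. Disjointness is automatic from strictness of $C$ and planarity, since any intersection element would create a forbidden comparability of the form $a_i\leq b_{\ell+1}$ or $a_\ell\leq b_{i+1}$ with $\ell\leq i-1$. The existence of $\lambda_i$ of the required height is where the hypothesis that $(a_{i-1},b_{i-1})$ is not special is used, together with the separator $\delta_{i-1}$ afforded by $C\subseteq I_4^{\text{sep}}$: depending on which of the four failure conditions of specialness holds, we take $\lambda_i$ to be either a walk witnessing $a_i\leq b_{i+1}$ in $P$ (when $b_{i+1}$ is drawn above the $a_{i-1}$-line) or a walk from $a_i$ spliced with a portion of $\delta_{i-1}$ so as to cross the $a_{i-1}$-line at a point to the right of $a_{i-1}$.

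Once $W_k$ has been constructed, the contradiction is obtained by appending the walk $\beta_k$ witnessing that $b_k$ sees the left side of $a_k$, which places $a_k$ to the right of $\beta_k$. Provided the $\lambda_i$'s were chosen so that $b_k$ lies to the right of some earlier walk in the construction, the resulting collection is still a $k$-wall with $b_1$ to its left (the elements of $\beta_k$ satisfy $p\leq b_k=b_{(k-1)+1}$, so the wall condition is preserved with $\ell=k-1$), and $a_k$ is to the right of $\beta_k$, contradicting Claim~\ref{claim:i-wall} applied with $i=k$.

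The main obstacle in executing this plan will be the inductive step when $a_{i-1}$ is drawn above $a_i$: here the natural walk witnessing $a_i\leq b_{i+1}$ does not rise high enough, and constructing $\lambda_i$ requires a delicate splicing with $\delta_{i-1}$ that respects $y$-monotonicity, disjointness from $W_i$, and the strict alternating cycle structure, with the four failure conditions of specialness handled separately. A further subtlety is ensuring that the final augmentation by $\beta_k$ genuinely yields a wall, which is where making compatible choices for the $\lambda_i$'s throughout the induction pays off.
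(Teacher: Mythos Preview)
Your plan has a real gap in the inductive step, and it stems from a misidentification of where the difficulty lies. Claim~\ref{claim:extend_wall} requires a walk $\lambda_i$ that starts at $a_i$, goes \emph{upward}, and crosses the $a_{i-1}$-line. Such a walk exists only when $a_i$ is drawn \emph{below} $a_{i-1}$; when $a_i$ is drawn above $a_{i-1}$ the claim is simply inapplicable, and no amount of splicing with the separator $\delta_{i-1}$ repairs this, since $\delta_{i-1}$ also starts on the $a_{i-1}$-line and goes upward. You flag the case ``$a_{i-1}$ drawn above $a_i$'' as the main obstacle, but that is precisely the tractable case; the genuine obstacle is the opposite one, and your proposal does not handle it. A second issue is your disjointness claim: if an element $p$ on a wall walk satisfies $a_\ell\le p$ (rather than $p\le b_{\ell+1}$), then combining this with $a_i\le p$ yields no forbidden comparability, so disjointness is not automatic for an arbitrary $\lambda_i$ starting at $a_i$.

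The paper avoids both problems by arguing directly rather than by contradiction. It lets $j$ be the \emph{smallest} index with $a_{j+1}$ drawn above $a_j$, which forces $a_1,a_2,\ldots,a_j$ to be strictly decreasing in height. This single choice accomplishes two things at once: it guarantees that the walks $\alpha'_i$ (from $a_i$ up to the $a_{i-1}$-line) exist for every $i\le j$, so Claim~\ref{claim:extend_wall} can be applied all the way up to produce the $(j+1)$-wall $A_j$; and it makes the walks $\alpha_1,\alpha'_2,\ldots,\alpha'_j$ pairwise disjoint simply because they occupy disjoint horizontal strips. With $A_{j-1}$ in hand, the paper then verifies directly that $(a_j,b_j)$ is special, using property~\eqref{eq:using_separator} to pin down the position of $a_{j+1}$ and to force $b_{j+1}$ below $a_{j-1}$. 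There is no need for a final wrap-around contradiction via $\beta_k$; the argument terminates as soon as the height ordering first breaks.
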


\begin{proof}
We may assume that $b_1$ is drawn above all other $b_i$'s.
Let $j$ be the smallest index in $\set{1, \dots, k}$ such that $a_{j+1}$ is drawn above $a_j$.
(Such an index clearly exists.)
We will show that $(a_j,b_j)$ is a special pair of $C$.

Let $\gamma$ be a walk from $a_j$ to $b_{j+1}$, and let $\alpha$ be a walk from $a_j$ to the $b_j$-line.
We have to show that $b_{j+1}$ is drawn below $b_j$, and that $a_{j+1}$ is to the right of $\gamma$ and to the left of $\alpha$, as illustrated in Figure~\ref{fig:sep-special-pair}
 (left). 

\begin{figure}[h]
 \centering
 \includegraphics{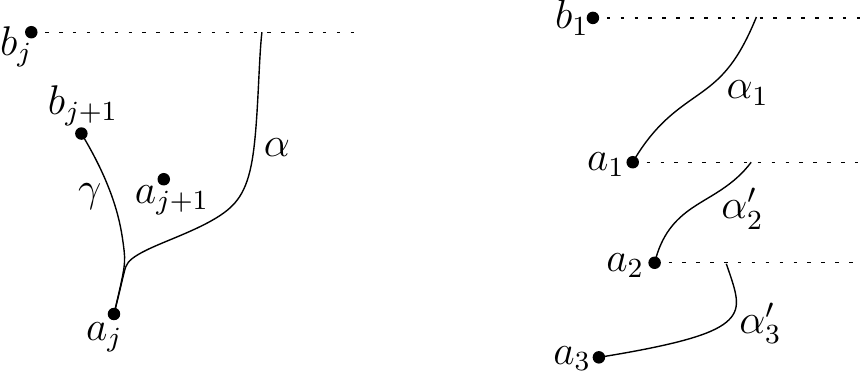}
 \caption{Positions of $a_{j+1}$ and $b_{j+1}$ (left), and the $4$-wall $A_3$ (right).}
 \label{fig:sep-special-pair}
\end{figure}

First let us quickly deal with the $j=1$ case.
In this case, elements $a_1, a_2, b_2, b_1$ appear in this order in the drawing, from bottom to top.
Also, we already know that $a_2$ is to the right of $\gamma$, since $b_2$ only sees the left side of $a_2$.
Furthermore, $\{\alpha\}$ is a $2$-wall for $C$ with $b_1$ to its left. 
Hence, $a_2$ cannot be to the right of $\alpha$ by Claim~\ref{claim:i-wall}, and therefore must be to the left of $\alpha$.
This proves the statement for $j=1$.

Next, assume $j \geq 2$.
We set up some walls as follows.
For each $i\in \{1, \dots, j\}$, let $\alpha_i$ denote a walk from $a_i$ to the $b_i$-line.
Also, if $i \geq 2$, let $\alpha'_i$ denote the portion of $\alpha_i$ that goes from $a_i$ to the $a_{i-1}$-line.
Define $A_i:=\{\alpha_1\} \cup \{\alpha'_2, \dots, \alpha'_i\}$, for each $i\in \{1, \dots, j\}$.
See Figure~\ref{fig:sep-special-pair} (right) for an illustration of how the walks in $A_3$ might look like.

We claim that $A_i$ is an $(i+1)$-wall for $C$ with $b_1$ to the left of $A_i$, which we prove by induction on $i$.
For the base case $i=1$, it is clear that $A_1=\{\alpha_1\}$ is a $2$-wall for $C$ with $b_1$ to its left (since $\alpha_1$ hits the $b_1$-line to the right of $b_1$).
Now assume $i\geq 2$ for the inductive step.
By induction, $A_{i-1}=\{\alpha_1, \alpha'_2, \dots, \alpha'_{i-1}\}$ is an $i$-wall for $C$ with $b_1$ to the left of $A_{i-1}$.
Applying Claim~\ref{claim:extend_wall} to the $i$-wall $A_{i-1}$ and the walk $\alpha'_i$, which is clearly disjoint from all walks in $A_{i-1}$, we
obtain that $A_{i-1} \cup \{\alpha'_i\} = A_i$ is an $(i+1)$-wall for $C$ with $b_1$ to its left, as desired.

Let us point out the following consequence of Claim~\ref{claim:extend_wall} when applied to the $j$-wall $A_{j-1}$:
\begin{equation}
\label{eq:using_separator}
\parbox{0.8\textwidth}{
If $\lambda$ is a walk from $a_j$ to some point drawn below $b_1$ such that $\lambda$ hits the $a_{j-1}$-line and is disjoint from all walks in $A_{j-1}$, then $a_{j+1}$
cannot be to the right of $\lambda$. }
\end{equation}

Indeed, if $a_{j+1}$ is to the right of $\lambda$, then $a_{j+1}$ is to the right of a walk from the
set $A_{j-1} \cup \{\lambda\}$, which is a $(j+1)$-wall with $b_1$ to its left by Claim~\ref{claim:extend_wall}.
However, this is not possible by Claim~\ref{claim:i-wall}.

We may now conclude the proof.
Recall that $a_{j+1}$ is drawn above $a_j$.
We already know that $a_{j+1}$ is to the right of $\gamma$, since $b_{j+1}$ only sees the left side of $a_{j+1}$.
We need to show that $b_{j+1}$ is drawn below $b_j$, and that $a_{j+1}$ lies to the left of $\alpha$.

Note that $\gamma$ is disjoint from all walks in the $j$-wall $A_{j-1}$ as each intersection would imply a non-existing comparability within the strict alternating cycle $C$.
Thus if $\gamma$ intersects the $a_{j-1}$-line, then property~\eqref{eq:using_separator} applies to $\gamma$ and yields that $a_{j+1}$ is to the left of $\gamma$, which is not true. 
Thus, $\gamma$ does not intersect the $a_{j-1}$-line.
In particular, $b_{j+1}$ is drawn below $a_{j-1}$, which is itself drawn below $b_j$.
Hence, $b_{j+1}$ is drawn below $b_j$.

It only remains to show that $a_{j+1}$ is to the left of $\alpha$.
Let $\alpha'$ be the portion of $\alpha$ ranging from $a_j$ to the $a_{j-1}$-line.
Recall that $a_{j+1}$ is drawn above $a_j$ and below $b_{j+1}$ (which itself is drawn below $a_{j-1}$).
Since $a_{j+1}$ cannot be on $\alpha'$ (because $C$ is a strict alternating cycle), we deduce that $a_{j+1}$ is either to the left of $\alpha'$ or to the right of $\alpha'$.
However, property~\eqref{eq:using_separator} applies to $\alpha'$ and hence $a_{j+1}$ must lie to the left of $\alpha'$.
Thus in particular, $a_{j+1}$ is to the left of $\alpha$.
This completes the proof of the claim.
\end{proof}

\subsection{Auxiliary directed graph}
 
We are now ready to go back to our main objective, which is to show that the following three sets can be partitioned into at most $h$ reversible sets each: Non-dangerous pairs in $I_4^{\text{sep}}$, non-dangerous pairs in $I_4^{\text{no-sep}}$, and dangerous pairs in $I_4^{\text{no-sep}}$.  
(Recall that an incomparable pair $(a,b) \in I_4$ is said to be dangerous if $a$ is drawn below $x_0$ and $a$ sees the left side of $x_0$.)  
To do so, we define an auxiliary directed graph on each of these three sets:  
Suppose that $J$ denotes one of these three sets.   
Define a directed graph $G$ with vertex set $J$ as follows.     
Given two distinct pairs $(a,b),(a',b')\in J$, we put an edge from $(a,b)$ to $(a',b')$ in $G$ if there exists a strict alternating cycle $(a_1, b_1), \dots, (a_k,b_k)$ in $J$ satisfying the following three properties:
\begin{enumerate}
\item $b_1$ is drawn topmost among all $b_i$'s; 
\item at least one of the pairs is special, and 
\item $(a,b)=(a_j,b_j)$ and $(a',b')=(a_{j+1},b_{j+1})$, where $j$ is the smallest index in $\set{1,\ldots,k}$ such that $(a_j,b_j)$ is special. 
\end{enumerate}   
In this case, we say that the alternating cycle is a \emph{witness} for the edge $((a,b), (a',b'))$.  
Observe that when there is an edge from $(a,b)$ to $(a',b')$ in $G$ then $a$ is drawn below $a'$. 
This implies in particular that $G$ has no directed cycle. 
Define the {\em chromatic number} $\chi(G)$ of $G$ as the chromatic number of its underlying undirected graph. 
Since every strict alternating cycle in $J$ has at least one special pair (c.f.\ Claims~\ref{claim:special-pairs-no-sep} and~\ref{claim:special-pairs-sep}), we have 
$$\dim(J) \leq \chi(G).$$  
Thus it is enough to show that $\chi(G) \leq h$. 
We will do so by showing that every directed path in $G$ has at most $h$ vertices. 
This is done in the next two sections; the proofs will be different depending on whether the pairs in $J$ are dangerous or not. 
In the meantime, let us make some observations on directed paths in $G$ that hold independently of the type of pairs in $J$. 

Let $(a^1, b^1), \dots, (a^{\ell}, b^{\ell})$ denote any directed path in $G$. 
(To avoid confusion, we use superscripts when considering directed paths and subscripts when considering alternating cycles.) 
Note that the $a^i$'s go up in the diagram, while the $b^i$'s go down: Indeed, for each $i\in  \{1,\dots, \ell-1\}$ the two pairs $(a^i, b^i)$ and $(a^{i+1}, b^{i+1})$ appear consecutively in that order in some strict alternating cycle in $J$ where $(a^i, b^i)$ is special, and thus $a^{i+1}$ is drawn above $a^i$ and $b^{i+1}$ is drawn below $b^i$.  

Now, for each $i \in \{1, \dots, \ell-1\}$, let $\gamma^i$ denote an arbitrary walk from $a^i$ to $b^{i+1}$ witnessing the relation $a^i \leq b^{i+1}$ in $P$. 
The following observation will be useful in our proofs: 
\begin{equation}
\label{eq:completely_to_the_right}
\textrm{
For each $i\in \{1, \dots, \ell-2\}$, the walk $\gamma^{i+1}$ is completely to the right of $\gamma^i$.
}
\end{equation}

\begin{figure}[t]
 \centering
 \includegraphics{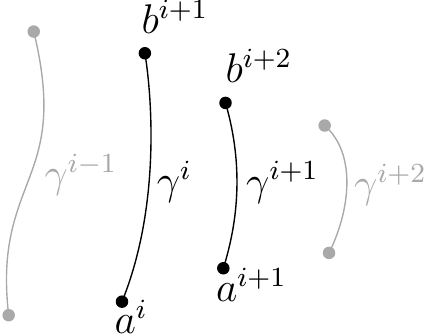}
 \caption{How the $\gamma^i$'s look like.}
 \label{fig:gamma-walks}
\end{figure}

By `completely to the right', we mean that every point of $\gamma^{i+1}$ is to the right of $\gamma^i$ (as in Figure~\ref{fig:gamma-walks}).
To see this, recall once again that $a^i$ is drawn below $a^{i+1}$ and that $b^{i+2}$ is drawn below $b^{i+1}$. 
Moreover, $\gamma^i$ and $\gamma^{i+1}$ cannot intersect because $a^{i+1}$ and $b^{i+1}$ are incomparable in $P$. 
Finally, $a^{i+1}$ is to the right of $\gamma^i$, as follows from the fact that $(a^i, b^i)$ is a special pair of some alternating cycle in $J$ where $(a^i,b^i), (a^{i+1}, b^{i+1})$ appear consecutively in that order. 
Altogether, this implies that $\gamma^{i+1}$ is completely to the right of $\gamma^i$. 

We continue our study of directed paths in $G$ in the next two sections. 
 
\subsection{Non-dangerous pairs} 

In this section we consider pairs in $I_4^{\text{sep}}$ and in $I_4^{\text{no-sep}}$ that are not dangerous. 
Our approach is independent of whether the pairs have separators or not, thus in this section we fix $J$ as the subset of non-dangerous pairs in one of these two sets. 
We show that $\chi(G) \leq h$, where $G$ denotes the directed graph on $J$ defined earlier: 

\begin{claim}
Every directed path in $G$ has at most $h$ vertices, and hence $\dim(J) \leq \chi(G) \leq h$.
\end{claim}
\begin{proof}
Let $(a^1, b^1), \dots, (a^{\ell}, b^{\ell})$ denote any directed path in $G$. 
For $i\in\set{1,\ldots,\ell-1}$, let $\gamma^i$ be a walk witnessing the relation $a^i\leq b^{i+1}$ in $P$ that is `leftmost' in the diagram among all such walks: 
For every point $p$ of $\gamma^i$ and every walk $\beta$ witnessing that relation, either $p$ is also on $\beta$, or $p$ is to the left of $\beta$. 
(A little thought shows that $\gamma^i$ is well defined, and uniquely defined.) 

Let $q$ be the largest index in $\{1, \dots, \ell-1\}$ such that $a^q$ is drawn below $x_0$ if there is such an index, otherwise set $q:=0$. 
To illustrate the usefulness of property~\eqref{eq:completely_to_the_right}, we use it to show that $h \geq q+1$.  
Assuming $q > 0$ (otherwise the claim is vacuous), we first note that  $x_0$ is to the left of $\gamma^1$, since $a^1$ is drawn below $x_0$ and sees only its right side. (This is where we use that $(a^1, b^1)$ is not dangerous.) 
Fix some walk $\alpha$ witnessing the relation $x_0 \leq b^{\ell}$ in $P$. 
Since $a^i$ is below the $x_0$-line for each $i=1, \dots, q$, we directly deduce from~\eqref{eq:completely_to_the_right} that $\alpha$ must intersect each of $\gamma^1, \dots, \gamma^{q}$; see Figure~\ref{fig:piercing-gammas}.  
Given that these walks are disjoint, and that each intersection contains at least one element from $P$, counting $x_0$ we conclude that the height of $P$ is at least $q+1$.  
(In fact, we almost get a lower bound of $q+2$ by counting $b_{\ell}$; however, note that $\alpha$ could intersect $\gamma^{q}$ on $b_{q+1}$, which coincides with $b_{\ell}$ when $q=\ell-1$.)  

\begin{figure}[t]
 \centering
 \includegraphics{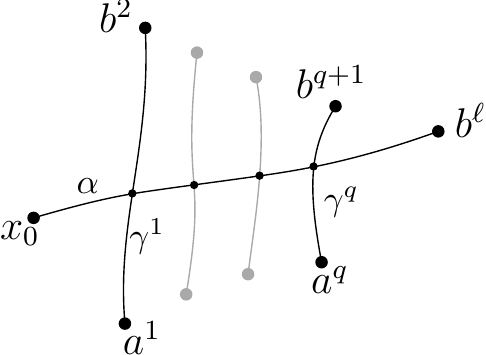}
 \caption{Each walk from $x_0$ to $b^\ell$ intersects the $q$ walks $\gamma^1, \dots, \gamma^q$.}
 \label{fig:piercing-gammas}
\end{figure}

Thus we are already done with the proof if $q=\ell-1$. 
Assume from now on that $q < \ell-1$ (recall that possibly $q=0$).  
Ideally, we would like to argue that $\alpha$ must intersect all of $\gamma^{q+1},\dots, \gamma^{\ell-1}$ as well, which would be enough to conclude the proof.  
However, this is not necessarily true: For instance, $\alpha$ could avoid $\gamma^{q+1}$ by passing under $a^{q+1}$ and then going to its right. 
In the rest of the proof, we will enrich our current structure---i.e.\ the walks $\gamma^1, \dots, \gamma^{\ell-1}$---in a way that will allow us to conclude that $\alpha$ intersects at least $\ell-1-q$ disjoint walks, that are moreover disjoint from $\gamma^1, \dots, \gamma^q$.  
This will show that $\alpha$ contains at least $q + 1 + (\ell-1-q)=\ell$ elements of $P$, and hence that $h \geq \ell$, as desired.

\begin{figure}[t]
 \centering
 \includegraphics{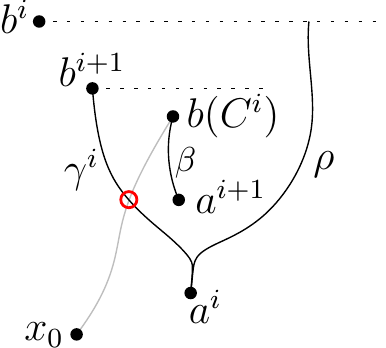}
 \caption{Situation if we assume that $b(C^i)$ is drawn below $b^{i+1}$. Considering any walk from $x_0$ to $b(C^i)$, we see a forbidden comparability in the poset.}
 \label{fig:cycle-properties}
\end{figure}

For each $i\in\set{q+1,\ldots,\ell-1}$, choose some strict alternating cycle $C^i$ in $J$ witnessing the edge $((a^i,b^i),(a^{i+1},b^{i+1}))$. 
Thus $(a^i,b^i), (a^{i+1},b^{i+1})$ appear consecutively in that order in $C^i$, and $(a^i,b^i)$ is a special pair of $C^i$.
The element $b$ of the pair $(a,b)$ appearing just after $(a^{i+1},b^{i+1})$ in $C^i$ will be important for our purposes, let us denote it by $b(C^i)$. 
(Note that $b(C^i) = b^i$ in case $C^i$ is of length $2$.)
We show: 
\begin{equation}
\label{eq:bCabove}
\textrm{
$b(C^i)$ is drawn above $b^{i+1}$ for each $i\in\set{q+1,\ldots,\ell-1}$. 
}
\end{equation}

To prove this, let $i\in\set{q+1,\ldots,\ell-1}$ and consider the alternating cycle $C^i$. 
First, if $C^i$ has length $2$, then  $b(C^i) = b^i$, which is indeed drawn above $b^{i+1}$. 
Next, assume that $C^i$ has length at least $3$.  
Arguing by contradiction, suppose that $b(C^i)$ is drawn below $b^{i+1}$. 
Let $\rho$ denote a walk starting in $a^i$ and hitting the $b^i$-line to the right of $b^i$.
 Since $(a^i,b^i)$ is a special pair of $C^i$, we know that $a^{i+1}$ lies to the right of $\gamma^i$ and to the left of $\rho$.
 We claim that $b(C^i)$ also lies to the right of $\gamma^i$ and to the left of $\rho$, as depicted in Figure~\ref{fig:cycle-properties}.
 Since $b(C^i)$ is drawn below $b^{i+1}$ and $b^i$, both walks $\gamma^i$ and $\rho$ hit the $b(C^i)$-line.
  Let $\beta$ denote a walk witnessing the comparability $a^{i+1}\leq b(C^i)$ in $P$. 
Now observe that both $\gamma^i$ and $\rho$ are disjoint from $\beta$ as otherwise this would imply $a^i\leq b(C^i)$ in $P$, contradicting the fact that the alternating cycle $C^i$ is strict (here we use that $C^i$ has length at least $3$). 
Given that $a^{i+1}$ is to the right of $\gamma^i$ and to the left of $\rho$, we deduce from this discussion that $\beta$ is completely to the right of $\gamma^i$, and completely to the left of $\rho$. 
 In particular, $b(C^i)$ is to the right of $\gamma^i$ and to the left of $\rho$, as claimed. 
 Since $x_0$ is drawn below $a^i$, this implies that any walk witnessing the comparability $x_0 \leq b(C^i)$ in $P$ must intersect at least one of $\gamma^i$ and $\rho$ (by planarity of the diagram). 
 It follows that $a^i \leq b(C^i)$ in $P$, contradicting the assumption that $C^i$ is strict.
This concludes the proof of~\eqref{eq:bCabove}.

We define the following additional walks:  
Let  $\delta^{q+1}$ denote a walk from $a^{q+1}$ to the $b^{q+1}$-line, and for each $i\in\set{q+2,\ldots,\ell}$, let $\delta^i$ denote a walk from $a^{i}$ to $b(C^{i-1})$ witnessing the relation $a^{i}\leq b(C^{i-1})$ in $P$. 
(We remark that $\delta^{q+1}$ is defined slightly differently from  $\delta^{q+2}, \dots,\delta^{\ell}$; fortunately, this will not cause any complication in the arguments below.) 
In the remaining part of the proof, the key step will be to show that the different walks create together a `nested structure' around $b^{\ell}$ with $x_0$ completely outside of it, as illustrated in Figure~\ref{fig:nested-struct}.   
Using this nested structure, we will easily conclude that the walk $\alpha$ intersects $\ell-1-q$ pairwise disjoint walks, each disjoint from $\gamma^1, \dots, \gamma^q$ and from $x_0$, as desired. 

\begin{figure}[t]
 \centering
 \includegraphics{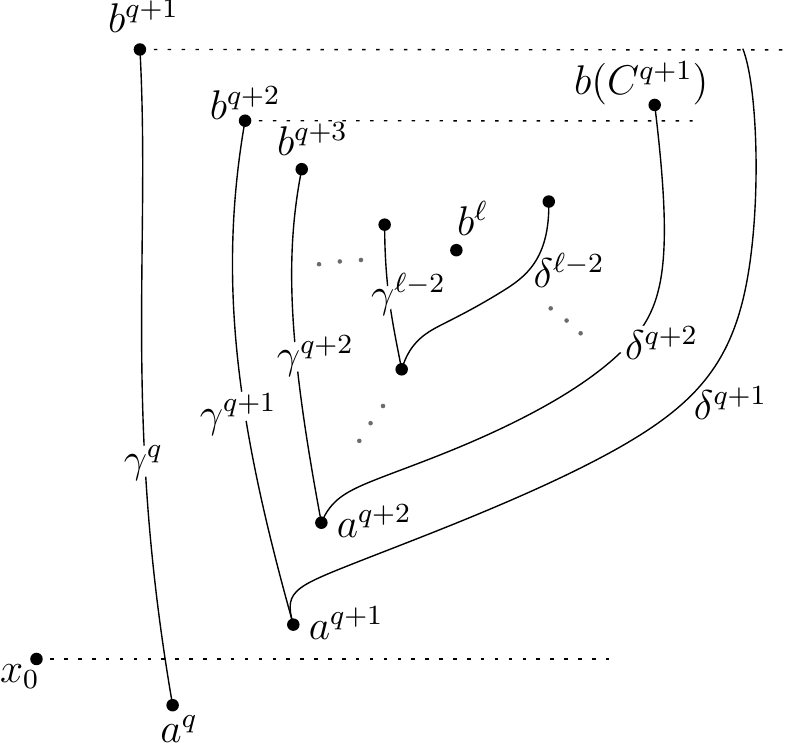}
 \caption{Nested structure built by the walks $\gamma^{q+1}, \dots, \gamma^{\ell-2}$ and $\delta^{q+1}, \dots, \delta^{\ell-2}$.}
 \label{fig:nested-struct}
\end{figure}
 
For each $i\in\set{q+1,\ldots,\ell}$, let $\tilde \delta^{i}$ denote the portion of $\delta^{i}$ from $a^{i}$ to the $b^{i}$-line. 
Thus $\tilde \delta^{q+1} = \delta^{q+1}$, and for $i \geq q+2$ the walk $\tilde \delta^{i}$ is a proper prefix of $\delta^{i}$ since $b(C^{i-1})$ is drawn above $b^{i}$ by~\eqref{eq:bCabove}. 

Let us point out some properties of the pair of walks $\gamma^i$ and $\tilde \delta^{i+1}$ for $i\in\set{q+1,\ldots,\ell-1}$:   
First, $\gamma^i$ and $\tilde \delta^{i+1}$ are disjoint as otherwise we would deduce that $a^{i+1}\leq b^{i+1}$ in $P$.
Recall also that $a^{i+1}$ lies to the right of $\gamma^i$, implying that $\tilde \delta^{i+1}$ starts to the right of $\gamma^i$.
By the disjointness of the two walks, this in turn implies that $\tilde \delta^{i+1}$ is completely to the right of $\gamma^i$ (here we use that both walks have their top endpoints on the $b^{i+1}$-line). 

 Building on the observations above, we show: 
 \begin{enumerate}
  \item $\tilde \delta^{i+1}$ is completely to the left of $\tilde \delta^{i}$, for each $i\in\set{q+1,\ldots,\ell-1}$,\label{item:passing-right} \\[.1ex] 
  \item $\gamma^i$ and $\tilde \delta^j$ are disjoint, for all $i\in\set{1,\ldots,\ell-1}$ and $j\in\set{q+1,\ldots,\ell}$ such that $i\neq j$.
  \label{item:disjoint-gamma-delta}
 \end{enumerate}

 Let us first prove~\ref{item:passing-right}. 
 Since $\tilde \delta^i$ starts in $a^{i}$ and ends on the $b^{i}$-line, 
 we see that $\tilde \delta^i$ starts below the whole walk $\tilde \delta^{i+1}$ and ends above $\tilde \delta^{i+1}$. 
Using that $(a^{i},b^{i})$ is a special pair of $C^{i}$, we also see that $a^{i+1}$ is to the left of $\delta^{i}$. 
Thus, to prove~\ref{item:passing-right} it is enough to show that $\delta^i$ and $\delta^{i+1}$ are disjoint. 
Recall that  $\delta^{i+1}$ witnesses the comparability $a^{i+1}\leq b(C^{i})$ in $P$, since $i+1 \geq q+2$. 
If $\delta^i$ and $\delta^{i+1}$ intersect, then we deduce that $a^{i}\leq b(C^{i})$ in $P$, in contradiction with the fact that $C^i$ is a strict alternating cycle.  
So $\delta^i$ and $\delta^{i+1}$  must be disjoint.

 Next we show~\ref{item:disjoint-gamma-delta}. 
First we consider the case $j \leq i-1$. 
By~\ref{item:passing-right}, $\tilde \delta^{i}$ is completely to the left of $\tilde \delta^{i-1}$. 
Iterating, we see that  $\tilde \delta^{i}$ is completely to the left of $\tilde \delta^{j}$. 
On the other hand, every point $p$ in  $\tilde \delta^{i}$ is either also on $\gamma^i$, or is to the right of $\gamma^i$. 
This is because of the `leftmost' property of $\gamma^i$: The walk $\tilde \delta^{i}$ starts in $a^i$ and hits the $b^{i+1}$-line to the right of $b^{i+1}$; if some point of  $\tilde \delta^{i}$ were to the left of  $\gamma^i$ then there would be a way to `re-route' a part of $\gamma^i$ to the left using the appropriate part of $\tilde \delta^{i}$, contradicting the fact that $\gamma^i$ has been chosen as the leftmost walk from $a^i$ to $b^{i+1}$.   
We deduce that  $\gamma^i$ is completely to the left of $\tilde \delta^{j}$, and thus in particular that the two walks are disjoint.  

Next we consider the case $j\geq i+1$.    
Here we use the fact that $\gamma^j$ is completely to the right of $\gamma^i$ (c.f.~\eqref{eq:completely_to_the_right}):  
 Since every point of $\gamma^j$ is either on $\tilde \delta^j$ or to the left of it (by the leftmost property of $\gamma^j$, exactly as in the previous paragraph), we conclude that  $\tilde \delta^j$ is also completely to the right of $\gamma^i$. 
In particular, they are disjoint. 
 This completes the proof of \ref{item:disjoint-gamma-delta}. \\

 We are finally ready to show that $\ell\leq h$.
 Recall that we proved at the beginning of the proof that the walk $\alpha$ intersects each of $\gamma^1, \dots, \gamma^q$. 
Counting $x_0$, these intersections already single out $q+1$ elements of $P$ on $\alpha$. 
Using the walks we defined above, we now identify $\ell -q -1$ extra elements of $P$ on $\alpha$, implying that $P$ has height at least $\ell$.  

 Let $i\in\set{q+1,\ldots,\ell-2}$.
 The walks $\gamma^i$ and $\tilde \delta^{i}$ both start in $a^i$.
 Recall also that $\gamma^i$ hits the $b^\ell$-line to the left of $b^\ell$.
 Since $\tilde \delta^{\ell}$ hits the $b^\ell$-line to the right of $b^\ell$, we deduce from~\ref{item:passing-right} that the same is true for $\tilde \delta^{i}$.
 Recall also that $x_0$ is drawn below $a^i$, since $i>q$. 
 From these observations it follows that the walk $\alpha$ has to intersect the union $U_i$ of the two walks $\gamma^i$ and $\delta^{i}$  in an element of $P$.
 Using~\ref{item:disjoint-gamma-delta} we see that $U_i$ is disjoint from $U_j$ for all $j\in\set{q+1,\ldots,\ell-2}$ with $j\neq i$, and from $\gamma^1, \dots, \gamma^q$ as well. 

Hence considering the intersection of $\alpha$ with $U_i$ for $i=q+1, \dots, \ell-2$ we identify $\ell-q-2$ `new' elements of $P$ on $\alpha$. 
Finally, we can get an extra one by observing that element $b^{\ell}$ (the top endpoint of $\alpha$) has not been counted so far.  
This concludes the proof. 
\end{proof} 

\subsection{Dangerous pairs}

It only remains to partition the set $J$ of dangerous pairs in $I_{4}^{\text{no-sep}}$ into reversible sets. 
As in the previous section, let us show that $\chi(G) \leq h$ for the directed graph $G$ we defined on $J$: 

\begin{claim}
Every directed path in $G$ has at most $h$ vertices, and hence $\dim(J) \leq \chi(G) \leq h$.
\end{claim}
\begin{proof}
Let $(a^1, b^1), \dots, (a^{\ell}, b^{\ell})$ denote any directed path in $G$.  
For each $i\in\set{1,\ldots,\ell-1}$, let $C^i$ denote a strict alternating cycle in $J$ witnessing the edge $((a^i,b^i),(a^{i+1},b^{i+1}))$. 
Note that, thanks to Claim~\ref{claim:special-pairs-no-sep}, we know that $b^i$ is drawn topmost among all elements of $P$ appearing in $C^i$.   
In this proof, the element $a$ of the pair $(a,b)$ appearing just before $(a^{i},b^{i})$ in $C^i$ will play a special role, let us denote it by $a(C^i)$. 
(Observe that $a(C^i) = a^{i+1}$ in case $C^i$ is of length $2$.) 
Let $\delta^i$ denote a walk witnessing the relation $a(C^i) \leq b^i$ in $P$.

A key property of the walks $\delta^1, \dots, \delta^{\ell-1}$ is the following:  
\begin{equation}
\label{eq:aC}
\textrm{
$x_0$ and $b^{i+1}$ are both to the left of $\delta^i$ for each $i\in\set{1,\ldots,\ell-1}$. 
}
\end{equation}
Figure~\ref{fig:gamma-vs-delta} illustrates this situation.
To show this, let $i\in\set{1,\ldots,\ell-1}$ and let $(c_1, d_1), \dots, (c_k, d_k)$ denote the pairs forming the cycle $C^i$ in order, with $(c_1, d_1) = (a^i, b^i)$,  $(c_2, d_2) = (a^{i+1}, b^{i+1})$, and $c_k = a(C^i)$.  

\begin{figure}[t]
 \centering
 \includegraphics[scale=1.0]{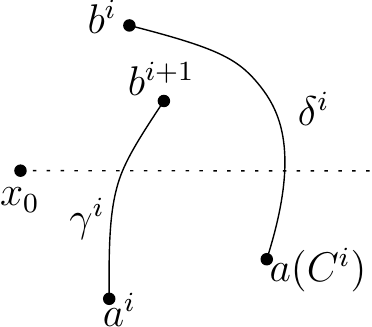}
 \caption{$x_0$ and $b^{i+1}$ are to the left of $\delta^i$.}
 \label{fig:gamma-vs-delta}
\end{figure}

Recall that for each $j\in\set{2,\ldots,k}$ the element $d_j$ is drawn above $c_k$ (since the $x_0$-line lies in between $c_k$ and $d_j$) and below $d_1$ (by our choice of $C^i$). 
In particular, $d_j$ is either to the left or to the right of $\delta^i$. 
(Note that $d_j$ cannot be on $\delta^i$ since $j\neq 1$.) 
We will prove that all of $d_2, \dots, d_k$ are to the left of $\delta^i$, and thus in particular $d_2 = b^{i+1}$ is. 

We start by showing that $d_k$ is to the left of $\delta^i$. 
Suppose not. 
Let $\beta$ be a walk witnessing the fact that $d_k$ sees the left side of $c_k$. 
Since $\beta$'s bottom endpoint is to the left of $\delta^i$ and its top endpoint $d_k$ is to the right of $\delta^i$, it follows that these two walks intersect (by planarity of the diagram). 
However, this implies $c_k \leq d_k$ in $P$, a contradiction. 
Hence $d_k$ is to the left of $\delta^i$ as claimed. 

Next, we observe that $x_0$ and $d_j$ are on the same side of $\delta^i$ for each $j\in\set{2,\ldots,k}$. 
For if $x_0$ and $d_j$ are on different sides of $\delta^i$, then any walk witnessing the relation $x_0 \leq d_j$ in $P$ intersects $\delta^i$, which in turn implies that $c_k \leq d_j$ in $P$, in contradiction with the fact that $C^i$ is a strict alternating cycle.  
(Notice that the fact that $d_1=b^i$ is drawn above $d_j$ is crucial here.)  
Combining this with the fact that $d_k$ is to the left of $\delta^i$, we deduce that $x_0$ and $d_2, \dots, d_k$ are all to the left of $\delta^i$, which establishes~\eqref{eq:aC}. 

For $i\in\set{1,\ldots,\ell-1}$, let $\gamma^i$ be an arbitrary walk witnessing the relation $a^i\leq b^{i+1}$ in $P$, let $\tilde \delta^i$ denote the portion of $\delta^i$ extending from $b^i$ to the $x_0$-line, and let similarly $\tilde \gamma^i$ denote the portion of $\gamma^i$ from $b^{i+1}$ to the $x_0$-line.     
Using~\eqref{eq:aC}, we aim to show that these walks build a `nested structure' as depicted in Figure~\ref{fig:nesting-top}.
To do so, we need to prove: 
 \begin{enumerate}
  \item $\tilde \delta^{i+1}$ is completely to the left of $\tilde \delta^{i}$, for each $i\in\set{1,\ldots,\ell-2}$,\label{item:passing-right_dangerous} \\[.1ex] 
  \item $\tilde \gamma^i$ is completely to the left of $\tilde \delta^{i}$, for each $i\in\set{1,\ldots,\ell-1}$, \label{item:disjoint-gamma-delta_dangerous} \\[.1ex] 
\item $\gamma^1, \dots, \gamma^{\ell-1}, \delta^{\ell-1}, \dots, \delta^1$ cross the $x_0$-line in this order from left to right,  \label{item:order_dangerous} \\[.1ex] 
\item $x_0$ is to the right of $\gamma^{\ell-2}$ and to the left of $\delta^{\ell-1}$. 
\label{item:x0_dangerous} 
 \end{enumerate}

In order to motivate these four properties, let us use them to show that $h \geq \ell$. 
For $i\in\set{2,\ldots,\ell-1}$, let $U_i$ denote the union of $\tilde \gamma^{i-1}$ and $\tilde \delta^i$, both of which have $b^i$ as top endpoint.  
Recalling that $\gamma^{i}$ is completely to the right of $\gamma^{i-1}$ (c.f.~\eqref{eq:completely_to_the_right}), it follows from~\ref{item:passing-right_dangerous}, \ref{item:disjoint-gamma-delta_dangerous}, and \ref{item:order_dangerous} that $U_2, \dots, U_{\ell-1}$ are pairwise disjoint, and moreover that they form a nested structure: 
Say that a point $p$ is {\em inside} $U_i$ if $p$ is to the right of $\tilde \gamma^{i-1}$ and to the left of $\delta^i$. 
Then every point inside $U_i$ is also inside $U_{i-1}, \dots, U_2$.
Since $x_0$ is inside $U_{\ell-1}$ by~\ref{item:x0_dangerous}, this implies that any walk witnessing the relation $x_0 \leq b^1$ in $P$ intersects each of $U_{2}, \dots, U_{\ell-1}$; see Figure~\ref{fig:nesting-top} illustrating this situation. 
Counting $x_0$ and $b^1$, this shows that $P$ has height at least $\ell$, as desired. 
  
\begin{figure}[t]
 \centering
 \includegraphics[scale=1.0]{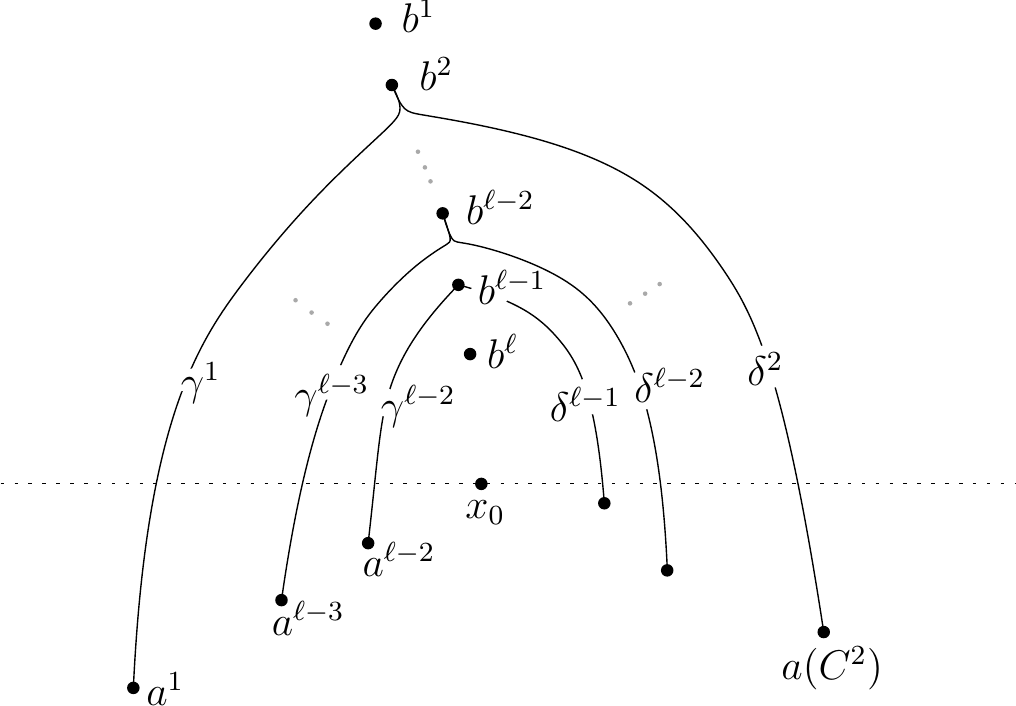}
 \caption{Nested structure formed by the walks $\gamma^1,\ldots,\gamma^{\ell-2}$ and $\delta^2,\ldots,\delta^{\ell-1}$.}
 \label{fig:nesting-top}
\end{figure}

Thus it only remains to prove the four properties above. 
To show~\ref{item:passing-right_dangerous}, first note that $\delta^i$ and $\delta^{i+1}$ must be disjoint, for otherwise we would have $a(C^i) \leq b^{i+1}$ in $P$, contradicting the fact that $C^i$ is a strict alternating cycle.  
Thus, using that $b^{i+1}$ is to the left of $\tilde \delta^{i}$ by~\eqref{eq:aC}, we deduce that  $\tilde \delta^{i+1}$ is completely to the left of $\tilde \delta^{i}$. 

Property~\ref{item:disjoint-gamma-delta_dangerous} is a consequence of~\eqref{eq:aC}: 
$b^{i+1}$ is to the left of $\delta^i$ by~\eqref{eq:aC}, and $\gamma^i$ and $\delta^i$ cannot intersect as this would imply $a^i \leq b^i$ in $P$. 

Property~\ref{item:order_dangerous} follows directly from~\eqref{eq:completely_to_the_right}, \ref{item:passing-right_dangerous}, and~\ref{item:disjoint-gamma-delta_dangerous}. 

Finally, to establish~\ref{item:x0_dangerous}, we only need to show that $x_0$ is to the right of $\gamma^{\ell-2}$ since we already know that $x_0$ is to the left of $\delta^{\ell-1}$ by~\eqref{eq:aC}.  
Arguing by contradiction, suppose that $x_0$ is to the left of $\gamma^{\ell-2}$. 
Here we exploit the fact that pairs in $J$ are dangerous: 
Consider a walk $\beta$ witnessing the fact that $a^{\ell-1}$ sees the left side of $x_0$. 
Since $(a^{\ell-2}, b^{\ell-2})$ is a special pair of $C^{\ell-2}$, the element $a^{\ell-1}$ is drawn above $a^{\ell-2}$ and is moreover to the right of $\gamma^{\ell-2}$; this situation is illustrated in Figure~\ref{fig:wrong-x0}.
Thus we deduce that $\beta$ intersects $\gamma^{\ell-2}$. 
However, this implies that $a^{\ell-1} \leq b^{\ell-1}$ in $P$, a contradiction.
This concludes the proof.
\begin{figure}[h]
	\centering
	\includegraphics[scale=1.0]{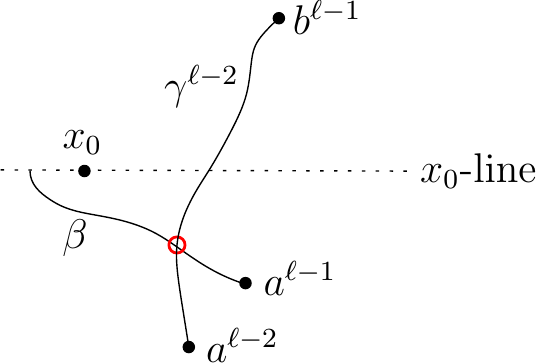}
	\caption{Situation if we assume that $x_0$ is to the left of $\gamma^{\ell-2}$. Considering a walk from $a^{\ell-1}$ to the left side of $x_0$, we obtain a forbidden comparability in $P$.}
	\label{fig:wrong-x0}
\end{figure}
\end{proof}

Hence, we conclude that $\dim(I_4) \leq 3h$ as claimed.   
This finishes the proof of Lemma~\ref{lemma:point-below-max}.

\section{More tools}\label{sec:tools}
In this section we introduce additional tools that will be used in the main proof. 
\subsection{Unfolding a poset}\label{sec:unfolding}
First, let us introduce a couple standard definitions that will be used in this section. 
A poset is said to be \emph{connected} if its cover graph is connected. 
An induced subposet $Q$ of a poset $P$ is a {\em convex} subposet of $P$ if whenever we have $x<y<z$ in $P$ and $x,z \in Q$, then $y$ is also included in $Q$.
Given a subset $X$ of elements in $P$, the \emph{convex hull} of $X$ in $P$ is the subposet of $P$ induced by the set of elements  $x\in P$ for which there are $y,z\in X$ such that $y\leq x\leq z$ in $P$.
We denote this subposet by $\conv_P(X)$ (or simply by $\conv(X)$ if $P$ is clear from the context).
Note that $\conv_P(X)$ is a convex subposet of $P$, and that it is equivalently defined by taking the smallest convex subposet of $P$ containing $X$.

The {\em dual poset} $P^d$ of a poset $P$ is the poset on the same set of elements with $x \leq y$ in $P^d$ if and only if $y\leq x$ in $P$.
It is immediate that $\dim(P) = \dim(P^d)$, and what is more important to us, $\dim_P(\Min(P),\Max(P)) = \dim_{P^d}(\Min(P^d),\Max(P^d))$. 
This observation will be used in the next section. 

If $Q$ is an induced subposet of $P$, then 
\begin{equation}\label{eq:dim-for-induced-subposets}
\dim_Q(I) = \dim_P(I)
\end{equation}
for every $I \subseteq \Inc(Q)$. 
This follows from the fact that $C \subseteq I$ forms an alternating cycle in $Q$ if and only if $C$ forms an alternating cycle in $P$. 
As a result, we do not need to specify whether the ambient poset is $P$ or $Q$ when considering the dimension of the set $I$, and we often simply write $\dim(I)$ in such a situation.     

Let $P$ be a connected poset with at least two elements and let $A:=\Min(P)$ and $B:=\Max(P)$. 
Observe that $A$ and $B$ are disjoint in this case. 
Given a fixed element $x_0\in A \cup B$ we can \emph{unfold} the poset $P$ starting from $x_0$ in the following way.
If $x_0\in A$ then set $A_0:=\set{x_0}$ and $B_1:= \{b\in B\mid a_0 \leq b\text{ in } P\}$. 
If $x_0\in B$ then set $A_0:=\emptyset$ and $B_1:= \{x_0\}$.  
In both cases, define the following sets for $i=1,2,\ldots$
 \begin{align*}
  A_i&:=\Big\{a\in A- \bigcup_{0\leq j<i}A_j\mid \text{ there is }b\in B_i\text{ with }a\leq b\text{ in }P\Big\}, \\
  B_{i+1}&:=\Big\{ b\in B- \bigcup_{1\leq j<i+1}B_j\mid \text{ there is }a\in A_{i}\text{ with }a\leq b\text{ in }P\Big\}.
 \end{align*}

Since $P$ is connected, the sets $A_0,A_1,\ldots$ partition $A$, and the sets $B_1,B_2\ldots$ partition $B$. 
Let us emphasize that $A_0$ is possibly empty. 
Note that after some point all sets in the sequence $A_0,B_1,A_1,B_2\ldots$ are empty. 
A prefix of the sequence $A_0,B_1,A_1,B_2\ldots$ containing all non-empty sets is called an \emph{unfolding} of $P$ from $x_0$. 
See Figure~\ref{fig:zigzag} for an illustration. 
Let us point out the following simple but important observation: 

\begin{equation}
\label{unfolding-properties}
\parbox{0.8\textwidth}{
If $x \in \Up(A_i)$ with $i\geq 1$, then $x$ is in at least one of the two downsets $\D(B_i)$ and $\D(B_{i+1})$, possibly both, but in no other downset $\D(B_j)$ with $j\neq i, i+1$. 
If $x \in \Up(A_0)$ then  $x$ is included only in $\D(B_1)$. \\

Dually, if $x \in \D(B_i)$  with $i\geq 1$, then $x$ is in at least one of the two upsets $\Up(A_{i-1})$ and $\Up(A_i)$, possibly both, but in no other upset $\Up(A_j)$ with $j\neq i-1, i$.
}
\end{equation}

The idea of unfolding a poset was introduced by Streib and Trotter~\cite{ST14} and was subsequently used in several works~\cite{JMTWW,MW,JMW-sparsity}.
The following lemma plays a key role in all applications of unfolding. 
Intuitively, it says that every unfolding contains a local part that roughly witnesses the dimension of the poset (more precisely, $\dim(A,B)$), up to a factor $2$.
We refer the reader to e.g.~\cite{MW} for a proof.

\begin{lemma}[Unfolding lemma]\label{lem:unfolding-chi}
 Let $P$ be a connected poset, let $A:=\Min(P)$, $B:=\Max(P)$, and suppose that $\dim(A,B)\geq 2$.
 Consider the sequence $A_0,B_1,\ldots,A_{m-1},B_m$ obtained by unfolding $P$ from some element $x_0\in A\cup B$. 
 Then there exists an index $i$ such that
 \begin{align*}
  \dim(A_{i},B_{i}) &\geq \dim(A,B)/2 \\
   & \mathrm{\it or} \\
  \dim(A_{i},B_{i+1}) &\geq \dim(A,B)/2 
 \end{align*}
holds. 
\end{lemma}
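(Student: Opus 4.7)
The plan is to argue by contradiction. Set $d := \dim(A, B)$ and suppose that every index $i$ satisfies both $\dim(A_i, B_i) < d/2$ and $\dim(A_i, B_{i+1}) < d/2$. The goal is to construct a partition of $\Inc(A, B)$ into fewer than $d$ reversible subsets, contradicting the definition of $d$.

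\emph{Step 1 (local palettes).} For each index $i$, fix a reversible partition of $\Inc(A_i, B_i)$ into $s_i < d/2$ parts $R^{0}_{i,1}, \dots, R^{0}_{i, s_i}$, and analogously a reversible partition of $\Inc(A_i, B_{i+1})$ into $t_i < d/2$ parts $R^{1}_{i,1}, \dots, R^{1}_{i, t_i}$. After relabeling, both families use a common palette $C := \{1, \dots, \lceil d/2\rceil - 1\}$.

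\emph{Step 2 (global coloring).} Build a coloring $\chi \colon \Inc(A, B) \to \{0, 1\} \times C$, giving a partition of $\Inc(A,B)$ into at most $2(\lceil d/2\rceil - 1) \leq d - 2$ classes. For a pair $(a, b)$ with $a \in A_i$ and $b \in B_j$: if $j = i$, set $\chi(a, b) = (0, c)$ where $c$ is the color of the $R^{0}_{i, \cdot}$-part containing $(a, b)$; if $j = i+1$, set $\chi(a, b) = (1, c)$ in the analogous way. For a ``far'' pair with $j \notin \{i, i+1\}$, route it to a local pair by exploiting~\eqref{unfolding-properties}: since $b \in B_j$ (with $j \geq 1$), there is some $a' \in A_{j-1}$ with $a' \leq b$ in $P$, which enables transferring a color from a canonical local pair involving $b$ at level $j-1$ (or, dually, at level $j$).

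\emph{Step 3 (reversibility of each class).} Verify that each color class is reversible. Suppose toward contradiction that a strict alternating cycle $(x_1, y_1), \dots, (x_k, y_k)$ lies in a single class, with $x_t \in A_{p_t}$ and $y_t \in B_{q_t}$. By~\eqref{unfolding-properties}, the relation $x_t \leq y_{t+1}$ in $P$ forces $q_{t+1} \in \{p_t, p_t + 1\}$ for each $t$. Tracking the trajectory of indices $p_1, q_2, p_2, q_3, \dots$ around the cycle, combined with the way the coloring was defined on both local and far pairs, the cycle induces a strict alternating cycle inside a single local class $R^{\sigma}_{i, c}$, contradicting the reversibility of that local class.

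The main obstacle lies in Step 2 and Step 3: one must choose a canonical assignment of the far pairs to local classes so that the induced cycles in Step 3 genuinely collapse inside a single $R^{\sigma}_{i, c}$. The factor $2$---explaining why the bound in the lemma is $d/2$ rather than $d$---arises from the use of two parallel copies of the local palette (``diagonal'' for $j = i$ and ``super-diagonal'' for $j = i+1$), which is required to accommodate the two possible choices $q_{t+1} \in \{p_t, p_t + 1\}$ arising at each step of an alternating cycle in the unfolding.
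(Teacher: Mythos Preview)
The paper does not prove this lemma itself; it simply cites~\cite{MW}. Your overall plan---two parallel local palettes indexed by a $\{0,1\}$-flag, together with an argument that monochromatic alternating cycles must localise to a single level---is indeed the standard one, but Steps~2 and~3 as written have a real gap.

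In Step~2 your treatment of ``far'' pairs (those with $a\in A_i$, $b\in B_j$, $j\notin\{i,i+1\}$) is not a definition. You exhibit $a'\in A_{j-1}$ with $a'\leq b$ and speak of ``transferring a color from a canonical local pair involving $b$'', but $(a',b)$ is a \emph{comparable} pair and carries no color, and there need not be any incomparable pair $(a'',b)$ with $a''\in A_{j-1}\cup A_j$ at all (for instance $b$ may dominate every element of $A_{j-1}\cup A_j$). Consequently Step~3 cannot be carried out as stated: you never prove the monotonicity that forces all $p_t$ to a common value, and with an undefined coloring on far pairs there is nothing to track. The fix is simpler than routing. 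Set the first coordinate of $\chi(a,b)$ to $0$ when $j\leq i$ and to $1$ when $j\geq i+1$; for far pairs let the second coordinate be arbitrary, say~$1$. In a monochromatic strict alternating cycle with first coordinate $0$ you have $q_t\leq p_t$ for all $t$, while $x_t\leq y_{t+1}$ and~\eqref{unfolding-properties} give $q_{t+1}\in\{p_t,p_t+1\}$, hence $p_{t+1}\geq q_{t+1}\geq p_t$; going around the cycle forces $p_1=\cdots=p_k=:i$ and then $q_t=i$ for all $t$, so the whole cycle lies in a single $R^{0}_{i,c}$, a contradiction. The first-coordinate-$1$ case is symmetric, yielding $\dim(A,B)\leq \max_i\dim(A_i,B_i)+\max_i\dim(A_i,B_{i+1})$, which is the lemma. (Incidentally, your bound $2(\lceil d/2\rceil-1)\leq d-2$ fails for odd $d$; it is only $\leq d-1$, but that still suffices.)
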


Suppose that we have $\dim(A_i,B_j)\geq \dim(A,B)/2$ for some $j\in \{i,i+1\}$, as in the lemma. 
Consider the convex subposet $Q:=\conv(A_i\cup B_j)$ whose element set is $(\Up(A_i)\cap \D(B_j))\cup A_i\cup B_j$ as follows from the definition of a convex hull.
Recall that by~\eqref{eq:dim-for-induced-subposets} we have $\dim_P(A_i,B_j) = \dim_{Q}(A_i,B_j)$, thus we can (and do) omit subscripts in the above lemma. 

Assume further that $\dim(A,B) \geq 6$, implying $\dim(A_i,B_j)\geq 3$, which will be the case when we apply the lemma later on in the proof. 
While our starting poset $P$ was assumed to be connected, it could be that $\conv(A_i\cup B_j)$ is not. 
Since we would like to keep connectivity in what follows, we use the following observation: 
If $R$ is a disconnected poset and $\dim(\Min(R), \Max(R)) \geq 3$, then there is a component $R'$ of $R$ with $\dim(\Min(R'), \Max(R'))=\dim(\Min(R), \Max(R))$. 
(This is not difficult to prove, see e.g.~\cite[Observation~5]{JMTWW} for a proof).     
Going back to our poset $\conv(A_i\cup B_j)$, since we assumed that $\dim(A_i,B_j) \geq 3$, this means that there is a component $P'$ of $\conv(A_i\cup B_j)$ with $\Min(P')\subseteq A_i$ and $\Max(P')\subseteq B_j$ such that
\[
 \dim(\Min(P'),\Max(P'))=\dim(A_i,B_j) \geq \dim(A,B)/2.
\]
The poset $P'$ is then said to be a \emph{core} of $P$ with respect to $x_0$, or simply an {\em $x_0$-core} of $P$ in brief.
Clearly, $P'$ is a convex subposet of $P$. 
Moreover, since $P'$ is connected, the set of elements of $P'$ is precisely $\Up_P(\Min(P')) \cap \D_P(\Max(P'))$. 

We attribute a type to $P'$ depending on whether $j=i$ or $j=i+1$:  
If $j=i$ then $P'$ is \emph{left-facing}, and if $j=i+1$ then $P'$ is \emph{right-facing}.
For example, under the assumption that $\conv(A_2\cup B_2)$ in Figure~\ref{fig:zigzag-claims} on the left is a core, $\conv(A_2\cup B_2)$ would be left-facing.

\begin{figure}[t]
  \centering
  \includegraphics[scale=1.0]{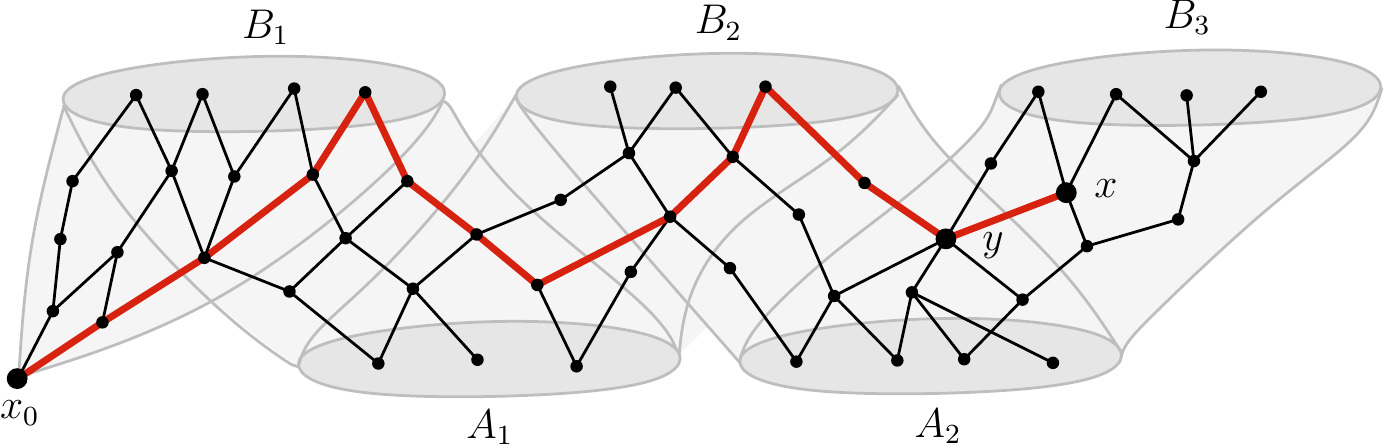}
  \caption{Unfolding $P$ from $a_0$ and the zig-zag path $\Z_P^{a_0}(x)$ of $x$.}
  \label{fig:zigzag}
\end{figure}
 
To summarize, when $P$ is a connected poset with $\dim(\Min(P),\Max(P)) \geq 6$, then we may consider a core $P'$ of $P$, and such a core has the following properties: 
\begin{enumerate}
\item $P'$ is connected;
\item $P'$ is a convex subposet of $P$ with element set $\Up_P(\Min(P')) \cap \D_P(\Max(P'))$;
\item $\dim(\Min(P'),\Max(P')) \geq \dim(\Min(P),\Max(P))/2$, and
\item $P'$ is either left-facing or right-facing.
\end{enumerate}

\subsection{Zig-zag paths}
Let $P$ be a connected poset and let $A:=\Min(P)$ and $B:=\Max(P)$.
Choose some $x_0\in A\cup B$ and let $A_0,B_1,\ldots,A_{m-1},B_m$ be the sequence obtained by unfolding $P$ starting from $x_0$.
We will define for each element $x\in P$ a corresponding `zig-zag path' connecting $x$ with $x_0$ in the cover graph of $P$.

To do so, we first need to introduce some notations.  
For $x\in P-\set{x_0}$, let $\alpha(x)$ denote the smallest index $i\geq 0$ such that $x\in\Up(A_i)$, and $\beta(x)$ the smallest index $j\geq 1$ such that $x\in \D(B_j)$. 
(For instance, in Figure~\ref{fig:zigzag} we have $\alpha(x)=2, \beta(x)=3$ and $\alpha(y)=2,\beta(y)=2$.)
Note that $x\in \conv(A_{\alpha(x)}\cup B_{\beta(x)})$ and $\beta(x)\in\set{\alpha(x),\alpha(x)+1}$ by \eqref{unfolding-properties}.

Next, we associate to each element $x\in P-\set{x_0}$ a {\em parent}: 
If $\beta(x)=\alpha(x)=i$, then note that $x\not\in B_i$ since $\alpha(b)=i-1$ for all elements $b\in B_i$.
Thus, there exist $y\in P$ and $b\in B_i$ such that $x<y\leq b$ in $P$ and $x<y$ is a cover relation in $P$, and we choose (arbitrarily) one such element $y$ to be the parent of $x$.  
If $\beta(x)=\alpha(x)+1=i+1$ (as in Figure~\ref{fig:zigzag}) then $x\not\in A_i$ since $\beta(a)=i$ for all elements $a\in A_i$. 
Thus, there exist $y\in P$ and $a\in A_i$ such that $a\leq y<x$ in $P$ and $y<x$ is a cover relation.
We choose one such element $y$ to be the parent of $x$. 

We write $\parent(x)$ to denote the parent of $x$. 
Observe that when $\alpha(x)=\beta(x)=i$, then by definition $\parent(x) \in \D(B_i)$ and hence $\alpha(\parent(x)) \leq i = \alpha(x)$ by~\eqref{unfolding-properties}.
Also since $\parent(x) \in \D(B_i)$, we have $\beta(\parent(x)) \leq i = \beta(x)$. 
Similarly,  when $\alpha(x)=i$ and $\beta(x)=i+1$, then by definition $\parent(x) \in \Up(A_i)$ and hence $\alpha(\parent(x)) \leq i = \alpha(x)$.
And since $\parent(x) \in \Up(A_i)$, we also have $\beta(\parent(x)) \leq i+1 = \beta(x)$ 
by~\eqref{unfolding-properties}.

To summarize, we have: 
\begin{equation}
\label{monotonicity-of-alpha-and-beta}
\alpha(\parent(x)) \leq \alpha(x) \text{ and } \beta(\parent(x)) \leq \beta(x),
\end{equation}
for every $x\neq x_0$ in $P$.

Let $T$ be the spanning subgraph of $\cover(P)$ obtained by only keeping edges connecting an element and its designated parent. 

\begin{claim} 
\label{claim:tree}
 The graph $T$ is a tree.
\end{claim}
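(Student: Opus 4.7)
The plan is to show that $T$ is a spanning subgraph of $\cover(P)$ with exactly $|P|-1$ edges that is connected, hence a tree.

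By construction, every element $x\in P\setminus\{x_0\}$ contributes exactly one edge to $T$ (namely the cover edge $\{x,\parent(x)\}$), so $T$ has exactly $|P|-1$ edges. It remains to establish connectivity, which amounts to showing that for every $x\neq x_0$, iterating the parent map yields a finite sequence $x,\parent(x),\parent^2(x),\ldots$ that terminates at $x_0$ (the unique element without a parent).

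A natural first attempt is to use $\alpha(x)+\beta(x)$ as a potential; by~\eqref{monotonicity-of-alpha-and-beta} this is weakly decreasing along the parent relation, but the main obstacle is that it need not decrease \emph{strictly}. For instance, in case~1 ($\alpha(x)=\beta(x)=i$) the parent $y$ only satisfies $y\in \D(B_i)$ and may itself again have $\alpha(y)=\beta(y)=i$. To fix this I would use the height $h(x)$ of $x$ in $P$ as a tie-breaker, with a sign chosen according to the case, setting
\[
\phi(x)=\bigl(\alpha(x)+\beta(x),\; \epsilon(x)\bigr),\qquad \epsilon(x)=\begin{cases}-h(x)&\text{if }\alpha(x)=\beta(x),\\ h(x)&\text{if }\alpha(x)<\beta(x),\end{cases}
\]
ordered lexicographically.

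A short case analysis then verifies that $\phi(\parent(x))<\phi(x)$ at every step. If $\alpha(x)=\beta(x)=i$, then $\parent(x)\in \D(B_i)$ by definition, so $\beta(\parent(x))\le i$; this precludes $\parent(x)$ lying in case~2 with index $\ge i$, and hence either the first coordinate of $\phi$ strictly drops, or $\parent(x)$ is again in case~1 with index $i$, in which case $\parent(x)>x$ in $P$ gives $h(\parent(x))>h(x)$ and so the second coordinate $-h$ drops. If $\alpha(x)<\beta(x)$, a dual argument using $\parent(x)\in \Up(A_{\alpha(x)})$ and $\parent(x)<x$ in $P$ yields the strict lexicographic drop. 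Since $\phi$ takes values in a well-ordered set, the parent chain must terminate, and since $x_0$ is the only element of $P$ without a parent, it terminates at $x_0$. This proves connectivity and, combined with the edge count, shows that $T$ is a spanning tree of $\cover(P)$.
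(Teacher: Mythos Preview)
Your proof is correct and uses the same two ingredients as the paper---weak monotonicity of $(\alpha,\beta)$ along the parent map, together with the poset order to break ties---but packages them differently. The paper orients each edge towards the parent, notes that every vertex except $x_0$ has out-degree exactly one, and then argues directly that there is no directed cycle: along such a cycle all $\alpha$- and $\beta$-values would be equal by~\eqref{monotonicity-of-alpha-and-beta}, forcing either $x_i<x_{i+1}$ for all $i$ (if all $\beta(x_i)=\alpha(x_i)$) or $x_i>x_{i+1}$ for all $i$ (if all $\beta(x_i)=\alpha(x_i)+1$), both of which are impossible cyclically. Your potential $\phi$ encodes the same dichotomy via the sign of the second coordinate; the parity observation that case~1 gives an even first coordinate and case~2 an odd one is what makes your subcase analysis go through. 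The paper's argument is a bit shorter since it avoids introducing $h(x)$ and the lexicographic order, but yours has the small advantage of giving termination of the parent chain explicitly rather than via ``no directed cycle implies functional graph is a tree''.

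One minor point: your opening claim that $T$ has \emph{exactly} $|P|-1$ edges presupposes that the edges $\{x,\parent(x)\}$ are pairwise distinct, i.e.\ that there is no pair $x,y$ with $\parent(x)=y$ and $\parent(y)=x$. This does follow from your potential argument (which rules out all cycles in the parent relation, in particular $2$-cycles), but is stated before that argument. You can either postpone the edge count, or simply note that the construction gives \emph{at most} $|P|-1$ edges, which together with connectivity already forces $T$ to be a spanning tree.
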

\begin{proof}
To see this, it is convenient to orient each edge $\set{x,\parent(x)}$ of $T$ towards $\parent(x)$.
Then every element of $P$ is incident to exactly one outgoing edge, except for $x_0$ which is a sink. 
Thus, to show that $T$ is a tree it is enough to show that there is no directed cycle in this orientation of $T$. 
Arguing by contradiction, suppose that $x_1x_2 \dots x_k$ is a directed cycle. 
Then $\alpha(x_1) \leq \cdots \leq \alpha(x_k) \leq \alpha(x_1)$ and $\beta(x_1) \leq \cdots \leq \beta(x_k) \leq \beta(x_1)$ by~\eqref{monotonicity-of-alpha-and-beta}. 
Thus all these inequalities hold with equality. 
By~\eqref{unfolding-properties} we know that $\beta(x_i) \in \set{\alpha(x_i),\alpha(x_i)+1}$, for all $i\in\{1,\ldots,k\}$. 
Suppose first that $\beta(x_i)=\alpha(x_i)$, for all $i\in\{1,\ldots,k\}$. 
Then $x_i < \parent(x_i) = x_{i+1}$ in $P$ for each $i\in\{1,\dots,k\}$ (cyclically), which is a contradiction. 
Now assume that $\beta(x_i)=\alpha(x_i)+1$, for all $i\in\{1,\ldots,k\}$. 
Then $x_i > \parent(x_i) = x_{i+1}$ in $P$ for each $i\in\{1,\dots,k\}$ (cyclically), which is again a contradiction.
This completes the proof.
\end{proof}

For each $x\in P$ let $\Z_P^{x_0}(x)$ denote the unique path in $T$ that connects $x_0$ and $x$, which we call the {\em zig-zag path} of $x$. 
(We drop the subscript $P$ when the poset is clear from the context.)  
See Figure~\ref{fig:zigzag} for an illustration. 

The following claim describes the shape of the initial part of the zig-zag path $Z^{x_0}(y)$ starting from $y$.
It is easy to see that when $y$ belongs to all three sets $\Up(A_{i-1})$, $\Up(A_i)$, and $\D(B_i)$, then the zig-zag path goes down from $y$ in $P$.
We will see that the trace of the zig-zag path $Z^{x_0}(y)$ in $\Up(A_i)$ is a chain, while its trace  in $\D(B_i)$ has a unique minimal element, see Figure~\ref{fig:zigzag-claims} for an illustration.

\begin{figure}[t]
  \centering
  \includegraphics[scale=1.0]{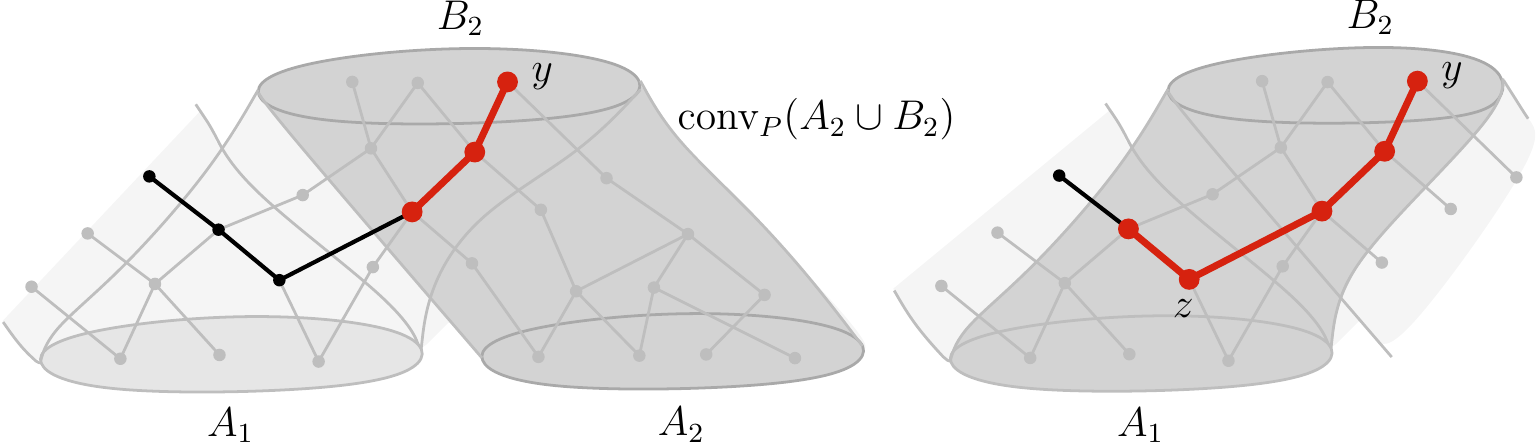}
  \caption{Illustration of Claim~\ref{claim:zigzag-chain}.}
  \label{fig:zigzag-claims}
\end{figure}

\begin{claim}\label{claim:zigzag-chain}
	If an element $y$ lies in all three sets $\Up(A_{i-1})$, $\Up(A_i)$, and $\D(B_i)$ for some $i\in\set{1,\ldots,k}$, then
	\begin{enumerate}
		\item the elements of $\Z^{x_0}(y)\cap \Up(A_i)$ form a chain contained in $\Up(A_{i-1})$, and $y$ is the maximal element of that chain;\label{item:zig-zag-chain}
		\item  the elements of $\Z^{x_0}(y) \cap \D(B_i)$ induce a subposet of $P$ with a unique minimal element.\label{item:zig-zag-unique-min}
	\end{enumerate}
\end{claim}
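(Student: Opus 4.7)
The plan is a careful analysis of how the coordinates $(\alpha,\beta)$ evolve along the zig-zag path. First, using property~\eqref{unfolding-properties}, the hypotheses $y\in\Up(A_{i-1})\cap\Up(A_i)\cap\D(B_i)$ together with the ``no other $\Up(A_k)$'' and ``no other $\D(B_k)$'' clauses of~\eqref{unfolding-properties} force $\alpha(y)=i-1$ and $\beta(y)=i$, so $y$ is a case-2 element in the terminology used to define the parent. Combining~\eqref{unfolding-properties} with the definition of $\parent$ then yields the following sharp description of a single step $y_m\mapsto y_{m+1}=\parent(y_m)$: if $y_m$ is case~1 with $\alpha_m=\beta_m=j$, then $\beta_{m+1}=j$ and $\alpha_{m+1}\in\set{j-1,j}$; if $y_m$ is case~2 with $\alpha_m=j$ and $\beta_m=j+1$, then $\alpha_{m+1}=j$ and $\beta_{m+1}\in\set{j,j+1}$. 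In short, a case-1 step freezes $\beta$ and weakly decreases $\alpha$, while a case-2 step freezes $\alpha$ and weakly decreases $\beta$.

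These rules decompose the zig-zag path $y=y_0,y_1,\ldots,y_M=x_0$ into consecutive segments indexed by the current $(\alpha,\beta)$-value. Starting at $(i-1,i)$, the first segment (call it \emph{segment~1}) consists of the maximal run of case-2 elements at $(i-1,i)$; since case-2 parents go strictly down in $P$, segment~1 is a chain $y_0>y_1>\cdots>y_{t_1}$ in $P$. The next segment (\emph{segment~2}) consists of the maximal run of case-1 elements at $(i-1,i-1)$; since case-1 parents go strictly up in $P$, segment~2 is a chain $y_{t_1+1}<y_{t_1+2}<\cdots<y_{t_2}$ in $P$, and the case-2 parent transition at the end of segment~1 gives $y_{t_1+1}<y_{t_1}$ in $P$. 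Every later element $y_m$ satisfies $\alpha_m\le i-2$, hence by~\eqref{unfolding-properties} lies neither in $\Up(A_i)$ nor in $\D(B_i)$.

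Part~\ref{item:zig-zag-chain} now follows immediately: an element $y_m$ lies in $\Up(A_i)$ if and only if $\alpha_m=i-1$ and $y_m\in\Up(A_{\alpha_m+1})$, which by~\eqref{unfolding-properties} forces $\beta_m=i$; thus $\Z^{x_0}(y)\cap\Up(A_i)$ is exactly segment~1, a chain contained in $\Up(A_{i-1})$ whose maximum in $P$ is $y$. For part~\ref{item:zig-zag-unique-min}, the same membership analysis shows that $\Z^{x_0}(y)\cap\D(B_i)$ is contained in segments~1 and~2: all of segment~1 lies in $\D(B_i)$ (since $\beta_m=i$ there), whereas membership in segment~2 is not automatic. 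The crucial observation is a downward-propagation of $\D(B_i)$-membership: if $y_s\le b$ for some $b\in B_i$ and some $s\in\set{t_1+1,\ldots,t_2}$, then by transitivity along the up-chain $y_{t_1+1}<\cdots<y_s$, every $y_m$ with $t_1+1\le m\le s$ also satisfies $y_m\le b$, so $y_m\in\D(B_i)$. Hence segment~2 $\cap~\D(B_i)$ is either empty or a prefix of the up-chain, and if non-empty its $P$-minimum is $y_{t_1+1}$. Combined with $y_{t_1+1}<y_{t_1}$ (so that $y_{t_1+1}$ lies strictly below every element of segment~1), this shows $\Z^{x_0}(y)\cap\D(B_i)$ has a unique minimal element: namely $y_{t_1+1}$ if it belongs to $\D(B_i)$, and $y_{t_1}$ otherwise. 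The edge case $i=1$ is covered by the same argument, with $y_{t_1+1}=x_0\in\D(B_1)$ playing the role of segment~2's first element.

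The main obstacle I anticipate is rigorously establishing the sharp single-step transition rules above; this amounts to invoking both directions of~\eqref{unfolding-properties} and ruling out spurious coordinate drops, which is elementary but requires care. Once those rules are in place, the segment decomposition is automatic and both parts reduce to short observations about chains and transitivity.
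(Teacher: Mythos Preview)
Your approach is essentially the paper's: both walk along the zig-zag path from $y$, identify an initial down-chain followed by an up-chain (your segments~1 and~2; the paper's $y$-to-$z$ and $z$-to-$z'$ portions), and then show via the $(\alpha,\beta)$ bookkeeping that nothing beyond these two segments lies in $\Up(A_i)$ or $\D(B_i)$.

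One small slip: you claim $\Z^{x_0}(y)\cap\Up(A_i)$ is \emph{exactly} segment~1, but an element at $(\alpha,\beta)=(i-1,i)$ need not lie in $\Up(A_i)$---only $y$ itself is assumed to. Fortunately you only need the containment $\Z^{x_0}(y)\cap\Up(A_i)\subseteq\text{segment~1}$, which your argument does establish, so part~\ref{item:zig-zag-chain} still follows. Also, your ``otherwise'' case in part~\ref{item:zig-zag-unique-min} is vacuous: whenever segment~2 is nonempty, $y_{t_1+1}<y_{t_1}\le y\in\D(B_i)$ forces $y_{t_1+1}\in\D(B_i)$, so the unique minimum is always $y_{t_1+1}$ (which is the paper's element~$z$).
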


\begin{proof}
Since $y$ belongs to $\Up(A_{i-1})$, $\Up(A_i)$, and $\D(B_i)$, we see that $\alpha(y) = i-1$ and $\beta(y) = i$ by~\eqref{unfolding-properties}.

Now start in $y$ and keep walking along the zig-zag path  $\Z^{x_0}(y)$ towards $x_0$, as long as the parent of the current element $x$ is smaller than $x$ in $P$, or we reach $x_0$. 
Let $z$ be the element we stop at.
If $z=x_0$ then the whole zig-zag path $Z^{x_0}(y)$ is simply a chain in $P$ with $y$ being its maximal element, thus~\ref{item:zig-zag-chain} holds.
Property~\ref{item:zig-zag-unique-min} holds as well as the whole chain lies in $D(B_1)$ and has a unique minimal element, namely $x_0$.

So we assume that $z\neq x_0$.
In this case, we keep walking along the zig-zag path $\Z^{x_0}(y)$ towards $x_0$, as long as the parent of the current element $x$ is above $x$ in $P$, or we reach $x_0$.  
Let $z'$ be the new element we stop at.

Observe that $z < \parent(z)$ in $P$. 
Since $z \leq y$ in $P$ and $y \in \D(B_i)$, we have $z \in D(B_i)$. 
Clearly, $z$ is smaller or equal (in $P$) to all the other elements of the zig-zag path $Z^{x_0}(y)$ that we traversed.
We have $\alpha(z) \leq \alpha(y)=i-1$, by~\eqref{monotonicity-of-alpha-and-beta}.
We also have $\alpha(z) \geq \alpha(y)$ since $z \leq y$ in $P$ and thus $\D(z) \subset \D(y)$.
Hence $\alpha(z) = \alpha(y)=i-1$. 
Since $z < \parent(z)$ in $P$, we must have $\beta(z)=\alpha(z)=i-1$ by the definition of parents.

It follows that $\beta(x) \leq \beta(z) = i-1$ for all $x$ on the zig-zag path $Z^{x_0}(y)$ appearing after $z$ (towards $x_0$) as well, by~\eqref{monotonicity-of-alpha-and-beta}.  
For such elements $x$ we cannot have $x \in U(A_i)$ by~\eqref{unfolding-properties}. 
Hence $\Z^{x_0}(y) \cap \Up(A_i)$ is a subset of the $y$--$z$ portion of the zig-zag path  $\Z^{x_0}(y)$. Thus it is a chain with $y$ as its maximal element.
This proves~\ref{item:zig-zag-chain}. 

For the proof of~\ref{item:zig-zag-unique-min}, we are going to show that $z$ is the unique minimal element in the subposet of $P$ induced by $\Z^{x_0}(y) \cap \D(B_i)$.
Recall that $z\in \D(B_i)$.

If $z'=x_0$ then all elements on the zig-zag path  $\Z^{x_0}(y)$ are greater or equal to $z$ in the poset $P$. 
Thus~\ref{item:zig-zag-unique-min} holds.

Now, assume that $z'\neq x_0$. 
Thus $z' > \parent(z')$ in $P$. 
We have $\beta(z') \leq \beta(z)$, by~\eqref{monotonicity-of-alpha-and-beta}.
We also have $\beta(z') \geq \beta(z)$ since $z' > z$ in $P$ and thus $\Up(z') \subset \Up(z)$.
Hence $\beta(z') = \beta(z)=i-1$. 
Since $z' > \parent(z')$ in $P$, we must have $\alpha(z')=\beta(z')-1=i-2$ by the definition of parents.

It follows that $\alpha(x) = \alpha(z') \leq i-2$ for all $x$ on the zig-zag path $Z^{x_0}(y)$ appearing after $z'$ (towards $x_0$) as well, by~\eqref{monotonicity-of-alpha-and-beta}.  
For such elements $x$ we cannot have $x \in D(B_i)$ by~\eqref{unfolding-properties}. 
Hence $\Z^{x_0}(y) \cap \D(B_i)$ is a subset of the $y$--$z'$ portion of the zig-zag path  $\Z^{x_0}(y)$.  
Clearly, $z$ is the unique minimal element in the subposet induced by that subset.
This completes the proof of~\ref{item:zig-zag-unique-min}.
\end{proof}

\section{Proof of the main theorem}\label{sec:main-proof}
Let $P$ be a planar poset of height $h$.
First, we apply Lemma~\ref{lem:min-max-reduction} to obtain another planar poset $P'$ of height $h$ such that $\dim(P)\leq \dim(\Min(P'), \Max(P'))$.
In what follows we consider only the poset $P'$, so with a slight abuse of notation let us simply write $P$ for $P'$ from now on.
Our aim is to show that $\dim(\Min(P), \Max(P)) \leq 192h+96$, which implies our main theorem.

Clearly, we may assume that $\dim(\Min(P), \Max(P)) > 192 + 96 = 288$, as otherwise we are done. 
As noted in Section~\ref{sec:tools} we may assume that $P$ is connected.

Fix a plane drawing of the diagram of $P$ and let $G$ denote the cover graph of $P$ embedded in the plane according to this drawing. 
For simplicity, we may assume without loss of generality that no two elements of $P$ have the same $y$-coordinate in the drawing. 
We remark however that it is not essential to the arguments developed in this section that the diagram itself can be drawn in planar way, just that the cover graph can.     
Planarity of the diagram itself will be needed only when invoking Lemma~\ref{lemma:point-below-max}. 

The following straightforward observation will be used several times implicitly in the proof:
If $Q$ is a convex subposet of $P$ then our fixed drawing of the diagram of $P$ induces a plane drawing of the diagram of $Q$.

\subsection{Unfolding the poset}
\label{sec:unfolding_the_poset}
Given a connected convex subposet $Q$ of $P$, let $a(Q)$ denote the element of $Q$ with smallest $y$-coordinate in the drawing.
Clearly, $a(Q)$ is a minimal element of $Q$. 
Similarly, let $b(Q)$ denote the element of $Q$ with largest $y$-coordinate in the drawing, which is a maximal element of $Q$.

Using the terminology introduced in Section~\ref{sec:unfolding}, we iteratively unfold the poset $P$ three times: 
Let $Q_0 := P$. 
For $i=1,2,3$ let $c_{i-1} := a(Q_{i-1})$ and let $Q_{i}$ be a $c_{i-1}$-core of $Q_{i-1}$.  
Note that
\[
\dim(\Min(Q_2), \Max(Q_2)) \geq \dim(\Min(Q_{1}), \Max(Q_{1}))/2 \geq \dim(\Min(P), \Max(P))/4 \geq 6, 
\]
thus these cores can be defined (see the end of Section~\ref{sec:unfolding}). 

At least two of the cores $Q_1, Q_2, Q_3$ have the same type (left-facing or right-facing).  
Say this is the case for indices $i$ and $j$, with $i<j$. 
We would like to focus on the left-facing case in the rest of the proof. 
We will be able to do so thanks to the following trick:  
Say $Q_i$ and $Q_j$ are right-facing. 
Then we turn our attention to the dual poset $P^d$ of $P$ and its drawing obtained from that of $P$ by flipping it. 
For $k=1,2,3$, unfold $Q^d_{k-1}$ from $c_{k-1}=b(Q^d_{k-1})$; notice that $Q^d_{k}$ is a core of the unfolding, as follows from these two observations: 
\begin{enumerate}
    \item if $\{x\}, B_1, A_1, \dots, A_{m-1}, B_m$ is an unfolding of a poset $R$ with $x\in\Min(R)$, then $\emptyset, \{x\}, B_1, A_1, \dots, A_{m-1}, B_m$ is an unfolding of the dual poset $R^d$; 
    \item given a poset $R$ and $A\subseteq \Min(R)$, $B\subseteq \Max(R)$ we have $\dim_{R}(A,B) = \dim_{R^d}(B,A)$. 
\end{enumerate}
In other words, we simply mirror the three unfoldings we did before. 
The key observation is that the type of the core  $Q^d_{k}$ is the opposite of that of $Q_{k}$. 
In particular, $Q^d_i$ and $Q^d_j$ are left-facing.  
We may then work with $P^d$ and  $Q^d_i, Q^d_j$ instead of $P$ and $Q_i, Q_j$. 

To summarize, going to the dual poset if necessary, we may assume that $Q_i$ and $Q_j$ are both left-facing. 
Then $c_{i-1}$ is equal to either $a(Q_{i-1})$ or $b(Q_{i-1})$---i.e.\ $c_{i-1}$ is either the bottommost or the topmost point in the drawing of $Q_{i-1}$---and the same holds for $c_{j-1}$ w.r.t.\ $Q_{j-1}$.

Let $s_1:=c_{i-1}$ and $s_2:=c_{j-1}$. 
(Thus, either $s_1, s_2 \in A$ or $s_1, s_2 \in B$.) 
Let $P^1 := Q_{i-1}$ and $P^2 := Q_{j-1}$. 
Also, let $P^3:= Q_j$ and $A^3:=\Min(P^3)$, $B^3:=\Max(P^3)$.

By Lemma~\ref{lem:unfolding-chi} and the definition of cores we have:  
\[
 \dim(A^3,B^3)\geq \dim(\Min(P),\Max(P))/8.
\]
Our goal is to show $\dim(A^3,B^3) \leq 24h+12$, which implies our main theorem. 

Before pursuing further, let us emphasize that $P^3, P^2, P^1$ form an increasing sequence of {\em convex} subposets of $P$. 
In particular, they all are planar posets, and the drawing of $P$ induces in a natural way drawings of their respective diagrams. 
It is perhaps good to also recall that we do not need to specify whether $\dim(A^3,B^3)$ is to be understood w.r.t.\ $P^3, P^2, P^1$,  or $P$ as this is the same quantity.

Let $A_0^1,B_1^1,A_1^1,\ldots,A_{m_1-1}^1,B_{m_1}^1$ and $A_0^2,B_1^2,A_1^2,\ldots,A_{m_2-1}^2,B_{m_2}^2$ be sequences obtained by unfolding $P^1$ and $P^2$ from $s_1$ and $s_2$, respectively.
It follows from our left-facing assumption that there are indices $k\geq 1$ and $\ell\geq 1$ such that $P^2$ is contained in $\conv_{P^1}(A^1_k\cup B^1_k)$ and $P^3$ is contained in $\conv_{P^2}(A^2_{\ell}\cup B^2_{\ell})$. 
Figure~\ref{fig:iterative-unfold} illustrates this in the case $k=\ell=2$. 

\begin{figure}[t]
 \centering
 \includegraphics[scale=1.0]{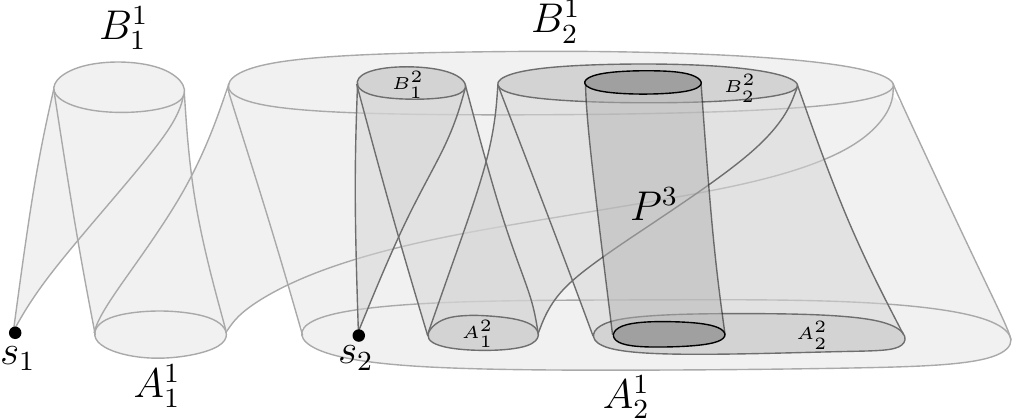}
 \caption{Unfolding $P^1$ and $P^2$.}
 \label{fig:iterative-unfold}
\end{figure}

Let us point out that $s_1$ is not in $P^2$. 
This can be seen as follows: 
First, note that either $A_0^1 = \{s_1\}$, or $A_0^1 = \emptyset$ and $B_1^1 = \{s_1\}$. 
In the first case, it is clear that $s_1$ is not in $P^2$ simply because $s_1$ will not be included in $\D(B_k^1) \cap \Up(A^1_k)$. 
In the second case, $s_1$ is not in $P^2$ because $\Inc(A_1^1, B_1^1) = \emptyset$, thus $\dim(A_1^1, B_1^1) = 1$, and hence $k \geq 2$ in that case. 
The same observation holds for $s_2$ w.r.t.\ $P^3$. 
Let us emphasize this observation:  
\begin{equation}
s_1 \notin P^2 \quad \textrm{ and } \quad s_2 \notin P^3. 
\label{eq:notin}
\end{equation}

\subsection{Red tree and blue tree}
Let $G^3$ denote the cover graph of $P^3$ embedded in the plane according to our fixed drawing of $P$.  
Since neither $s_1$ nor $s_2$ is in $P^3$ (c.f.~\eqref{eq:notin}) and both elements have either smallest or largest $y$-coordinates among elements of $P^1$ and $P^2$, respectively, we see that $s_1$ and $s_2$ are each drawn either below all elements of $G^3$ or above all of them. 
In particular, $s_1$ and $s_2$ are drawn in the outer face of $G^3$.

We will define two specific subgraphs of $G$, the cover graph of $P$.   
Both subgraphs will be trees and will have the property that all their internal nodes are drawn in the outer face of $G^3$ and all their leaves are drawn on the boundary of that outer face. 
The first tree is rooted at $s_1$ and will be colored red, while the second one is rooted at $s_2$ and will be colored blue.
See Figure~\ref{fig:red-blue-paths} for an illustration of how the trees will look like.
To define the trees we will use zig-zag paths with respect to our fixed unfoldings of $P^1$ and $P^2$.
For simplicity, given an element $v\in P^3$ we denote by $Z^1(v)$ and $Z^2(v)$ the zig-zag paths $\Z_{P^1}^{s_1}(v)$ and $\Z_{P^2}^{s_2}(v)$, respectively.

Now, consider an element $b\in B^3$ and its zig-zag path $\Z^1(b)$ in $P^1$ connecting $b$ to $s_1$. 
Let $x$ be the element of $P^3$ on that path that is closest to $s_1$. 
Recall that $s_1$ is not in $P^3$ by~\eqref{eq:notin}, thus $x\neq s_1$.    
Since $P^3$ is a core of $P^2$, and in particular the element set of $P^3$ is $\Up_{P^2}(A^3) \cap \D_{P^2}(B^3)$, 
there is an element $a\in A^3$ such that $a \leq b$ in $P^3$. 
This shows that $b \in \Up_{P^1}(A_k^1)$, since $a \in A^3 \subseteq A_k^1$. 
We also know that $b \in B^3 \subseteq B_k^1\subseteq \Up_{P^1}(A_{k-1}^1)$ (by definition of unfolding).  
Hence, we may apply Claim~\ref{claim:zigzag-chain}\ref{item:zig-zag-chain} on the poset $P^1$ and element $b$ w.r.t.\ the unfolding sequence $A_0^1,B_1^1,A_1^1,\ldots,A_{m_1-1}^1,B_{m_1}^1$ of $P^1$. 
By this claim, we know that $Z^{1}(b)\cap \Up_{P^1}(A_k^1)$ is a chain $C$ with the topmost element being $b$ and $C \subseteq \Up_{P^1}(A_{k-1}^1)$.
Observe in particular that $x$ is in $C$ and hence $x\leq b$ in $P^3$.

No element of the zig-zag path $Z^1(x)$ is in $P^3$ except for $x$, by the choice of $x$.   
Thus, since $s_1$ is drawn in the outer face of $G^3$, it follows that the whole path $Z^1(x)$ is drawn in the outer face, except for its endpoint $x$ which is on its boundary. 
We call $x$ the \emph{red exit point} of $b$, and the edge of $Z^1(x)$ incident to $x$ the {\em red exit edge} of $b$. 
Since $x$ is in $P^3$ and in the chain $C \subseteq \Up_{P^1}(A_{k-1}^1)$ we obtain the following obvious relations, which we emphasize for future reference: 

\begin{align}
x  \in  \D_{P^1}(B_k^1), \quad  \quad x \in  \Up_{P^1}(A_k^1), \quad \textrm{ and } \quad  x \in  \Up_{P^1}(A_{k-1}^1). 
\label{eq:red-exit-points}
\end{align}

Let $X$ be the set of red exit points of elements in $B^3$.
Let $T^1:=\bigcup_{x\in X}\Z^1(x)$. 
Then $T^1$ is a tree with $X$ as set of leaves, as follows from Claim~\ref{claim:tree}. 
We refer to $T^1$ as the \emph{red tree}.
Note that in our drawing of $\cover(P)$, the red tree is drawn in the outer face of $G^3$ with its leaves on the boundary, as in Figure~\ref{fig:red-blue-paths}.  

By considering $P^2$ instead of $P^1$ in the definition of red exit points, we  analogously define blue exit points for elements in $B^3$:
Given $b\in B^3$, the {\em blue exit point} of $b$ is the element $y$ of $P^3$ on the zig-zag path $Z^2(b)$ that is closest to $s_2$.   
The edge of $Z^2(y)$ incident to $y$ is said to be the {\em blue exit edge} of $b$. 
The element $y$ has the following properties: 
\begin{align}
y  \in  \D_{P^2}(B_\ell^2), \quad \quad y \in  \Up_{P^2}(A_\ell^2), \quad \textrm{ and } \quad  y \in  \Up_{P^2}(A_{\ell-1}^2). 
\label{eq:blue-exit-points}
\end{align}
 
We let $Y$ denote the set of blue exit points of elements in $B^3$, and let $T^2:=\bigcup_{y\in Y}\Z^2(y)$ denote the tree defined by the union of the zig-zag paths of these exit points.  
$Y$ is thus the set of its leaves, and the tree is drawn in the outer face of $G^3$, except for its leaves which are on its boundary. 
We refer to $T^2$ as the {\em blue tree}.    

\begin{figure}[t]
 \centering
 \includegraphics[scale=1.0]{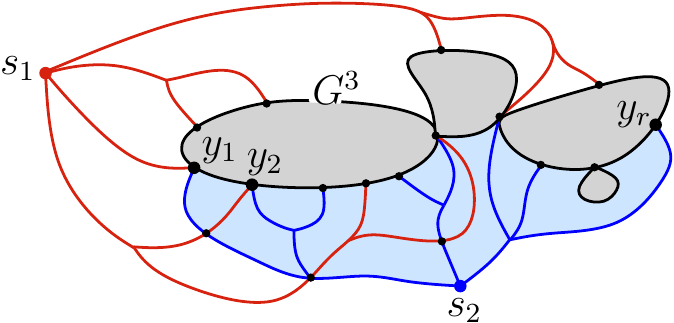}
 \caption{The graph $G^3$ together with the red and blue trees. 
The part outside $G^3$ is not drawn according to our fixed embedding of the diagram but has instead been redrawn freely for clarity, relying only on the planarity of the cover graph.}
 \label{fig:red-blue-paths}
\end{figure}

\subsection{Red exit edges trapped in the blue tree}
\label{sec:redblue_intersect}
Let $H$ denote the plane graph obtained by taking the union of $G^3$ and the blue tree. 
We use $R_H$ to denote the region of the plane bounded by the outer face of $H$ (including the boundary). 
In this section we deal with elements $b \in B^3$ whose red exit edges are ``trapped'' in the blue tree, in the sense that they are drawn in the region $R_H$.  
We will show that incomparable pairs involving these $b$'s can be partitioned into at most $12h+6$ reversible sets. 

Since the blue tree is embedded in the plane, the clockwise orientation of the plane induces a cyclic ordering of its leaves. 
Enumerate these leaves as $y_1,\ldots,y_r$ according to this ordering, in such a way that the two zig-zag paths $Z^2(y_1)$ and $Z^2(y_r)$ are on the boundary of the outer face of $H$.  
Observe that for each $i\in \{2, \dots, r-1\}$, the edge $uy_i$ of $Z^2(y_i)$ incident to $y_i$ is drawn in $R_H$. 

Let $B^{\mathrm{trapped}}$ be the set of all elements $b\in B^3$ such that the red exit edge of $b$ is drawn in $R_H$. 

\begin{claim}
\label{claim:trapped}
$\dim(A^3,B^{\mathrm{trapped}})\leq 12h+6$.
\end{claim}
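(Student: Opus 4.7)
The plan is to reduce to Lemma~\ref{lemma:point-below-max} applied twice inside the planar convex subposet $P^3$, which has height at most $h$ and whose diagram inherits a plane drawing from that of $P$. The target bound $12h+6=2(6h+3)$ suggests partitioning $B^{\mathrm{trapped}}$ into two classes, each admitting a common element of $P^3$ lying below all its members, so that Lemma~\ref{lemma:point-below-max} yields $6h+3$ per class.

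To produce such a partition I first extract a poset-theoretic comparability from the geometric trapping. For each $b\in B^{\mathrm{trapped}}$ the red exit edge of $b$ lies in $R_H$, but $s_1$ lies outside $H$ and $x(b)$ is the only vertex of $Z^1(x(b))$ belonging to $P^3$; hence by planarity the path $Z^1(x(b))$ must first meet $T^2$ at some vertex $v(b)\in P^1\cap P^2$, and this $v(b)$ lies on $Z^2(y(b))$ for some blue leaf $y(b)\in Y$. Using Claim~\ref{claim:zigzag-chain} applied along both zig-zag paths, the parent-monotonicity~\eqref{monotonicity-of-alpha-and-beta}, and the coordinate information~\eqref{eq:red-exit-points} and~\eqref{eq:blue-exit-points} of the exit points, one tracks the $\alpha$- and $\beta$-coordinates of $v(b)$ through both unfoldings to conclude that $y(b)\leq b$ holds in $P^3$ (exploiting convexity of $P^3$ inside $P^2$ and $P^1$).

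Next I use the cyclic order of the blue leaves $y_1,\ldots,y_r$ around $H$: by construction only $Z^2(y_1)$ and $Z^2(y_r)$ lie on the outer face of $H$, while the remaining $Z^2(y_i)$ cut into $R_H$ and are nested in a well-defined order. I split $B^{\mathrm{trapped}}$ according to whether $y(b)$ lies in the ``left half'' or ``right half'' of this cyclic ordering. The nesting of the blue zig-zag paths together with the comparability $y(b)\leq b$ then forces one of $y_1,y_r$ to lie below every $b$ in its half in $P^3$, via a chain that threads through the blue tree. Applying Lemma~\ref{lemma:point-below-max} to $P^3$, with this common blue leaf serving as the required $x_0$ and with $B$ being the corresponding half of $B^{\mathrm{trapped}}$, gives a $6h+3$ bound on each class and hence the desired $12h+6$ upon summing.

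The main obstacle is the comparability step $y(b)\leq b$: the crossing vertex $v(b)$ need not itself lie in $P^3$, and the red zig-zag path alternates up and down in $P$, so the argument must carefully track where on $Z^1(x(b))$ and $Z^2(y(b))$ the vertex $v(b)$ sits, and assemble comparabilities in $P$ into a single chain from $y(b)$ to $b$ that stays within $P^3$. Once this reduction is in hand, the planar partitioning argument and the two invocations of Lemma~\ref{lemma:point-below-max} are largely bookkeeping.
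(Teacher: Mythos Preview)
Your overall strategy---split $B^{\mathrm{trapped}}$ into two parts and apply Lemma~\ref{lemma:point-below-max} once to each---matches the paper. However, the key comparability you aim for is wrong, and this breaks the proof.

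You want to show that for each $b\in B^{\mathrm{trapped}}$ one of the blue leaves $y_1,y_r$ satisfies $y_i\leq b$ in $P^3$, and then use that leaf as the element $x_0$ in Lemma~\ref{lemma:point-below-max} applied to $P^3$. But this comparability need not hold. The crossing vertex $v=v(b)$ lies on $Z^2(y_1)$ or $Z^2(y_r)$ (this much is forced, since only these two blue paths lie on the outer boundary of $H$; your detour through an arbitrary $y(b)\in Y$ and a ``nesting'' argument is unnecessary). Using Claim~\ref{claim:zigzag-chain}\ref{item:zig-zag-chain} for $P^1$ one indeed gets $v\leq x(b)\leq b$ in $P^2$. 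But on the blue side you need a relation going the \emph{other} way: from some fixed element up to $v$. The zig-zag path $Z^2(y_1)$ first goes \emph{down} from $y_1$ in $P^2$ (this is exactly what Claim~\ref{claim:zigzag-chain}\ref{item:zig-zag-chain} says), so $y_1$ is the \emph{maximum} of the initial segment of its own path, not a lower bound for it. There is no reason for $y_1\leq v$ or $y_1\leq b$.

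What the paper does instead is invoke Claim~\ref{claim:zigzag-chain}\ref{item:zig-zag-unique-min} on $P^2$ to obtain elements $z_1,z_r\in P^2$ with $Z^2(y_i)\cap\D_{P^2}(B^2_\ell)\subseteq\Up_{P^2}(z_i)$. Since $v\in\D_{P^2}(B^2_\ell)$ (because $v\leq b\in B^2_\ell$), one gets $z_i\leq v\leq b$ in $P^2$. The elements $z_i$ typically lie in $P^2\setminus P^3$, so Lemma~\ref{lemma:point-below-max} must be applied in $P^2$ (where $A^3\subseteq\Min(P^2)$ and $B^3\subseteq\Max(P^2)$), not in $P^3$. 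Your plan to ``assemble comparabilities \dots\ into a single chain from $y(b)$ to $b$ that stays within $P^3$'' is exactly the step that fails: the chain has to dip below $P^3$ through $z_i$, and you need part~\ref{item:zig-zag-unique-min} of Claim~\ref{claim:zigzag-chain}, not part~\ref{item:zig-zag-chain}, to produce the right anchor.
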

\begin{proof}
We will show that there are two elements $z_1$ and $z_r$ of $P_2$ such that, for each element $b\in B^{\mathrm{trapped}}$, we have $z_1 \leq b$ or $z_r \leq b$ in $P_2$.
Thus, $\Inc(A^3,B^{\mathrm{trapped}}) \subset \Inc(A^3,B^3 \cap \Up_{P^2}(z_1))\cup \Inc(A^3,B^3 \cap \Up_{P^2}(z_r))$.
Since $\dim(A^3, B^3 \cap \Up_{P^2}(z_i)) \leq 6h+3$, for $i\in\set{1,r}$ by Lemma~\ref{lemma:point-below-max}, this will conclude the proof of the claim.

To do so, the two zig-zag paths $\Z^2(y_1)$ and $\Z^2(y_r)$ will play an important role.
Recall that $y_1,y_r \in  \D_{P^2}(B^2_{\ell})$, $y_1,y_r \in \Up_{P^2}(A^2_{\ell})$, and $y_1,y_r \in \Up_{P^2}(A^2_{\ell-1})$ by~\eqref{eq:blue-exit-points}. 
Apply Claim~\ref{claim:zigzag-chain}\ref{item:zig-zag-unique-min} on $P^2$ with element $y_1$.  
By this claim, there is an element $z_1$ of $P^2$ such that $\Z^2(y_1)\cap \D_{P^2}(B^2_{\ell})\subseteq \Up_{P^2}(z_1)$. 
Doing the same with element $y_r$, we find an element $z_r$ of $P^2$ such that $\Z^2(y_r)\cap \D_{P^2}(B^2_{\ell})\subseteq \Up_{P^2}(z_r)$. 
Let us repeat the properties of $z_1$ and $z_r$, for future reference: 
\begin{align}
 \Z^2(y_1)\cap \D_{P^2}(B^2_{\ell})\subseteq \Up_{P^2}(z_1)\quad\text{ and }\quad \Z^2(y_r)\cap \D_{P^2}(B^2_{\ell})\subseteq \Up_{P^2}(z_r).\label{eq:y-points}
\end{align}
Or in words: In $P^2$, the element $z_1$ is smaller or equal to each element of the zig-zag path $Z^2(y_1)$ that is in the downset of $B^2_{\ell}$, and the same holds for $z_r$ with respect to $y_r$.

Consider $b\in B^{\mathrm{trapped}}$. 
Let $x$ denote its red exit point, for which we have $x\leq b$ in $P^3$, and $ux$ its red exit edge.
Observe that the endpoint $s_1$ of the path $Z^1(b)$ is not in $R_H$, since $s_1$ is either topmost or bottommost in the drawing of $P^1$. 
On the other hand, the edge $ux$ is drawn in the region $R_H$. 
Recall that every vertex on the boundary of the outer face of $H$ that is not in $G^3$ is in the union of $Z^2(y_1)$ and $Z^2(y_r)$. 
Thus, we see that $Z^1(b)$ must intersect $Z^2(y_1)$ or $Z^2(y_r)$. 
Let $y$ be an element of $P^2$ from this intersection. 

Next, we show that $y\leq x$ in $P^2$.
Since $x$ is a red exit point, we have  $x\in\D_{P^1}(B_k^1)$, $x \in \Up_{P^1}(A_k^1)$, and $x \in\Up_{P^1}(A_{k-1}^1)$ by~\eqref{eq:red-exit-points}. 
Note also that  $y\in\D_{P^1}(B_k^1)$ and $y \in \Up_{P^1}(A_k^1)$ since $y\in P^2$. 
Using Claim~\ref{claim:zigzag-chain}\ref{item:zig-zag-chain} on $P^1$ and zig-zag path $Z^1(x)$, we then see that $y\leq x$ in $P^1$. 
This relation also holds in $P^2$ since $x$ and $y$ are both in $P^2$ (and $P^2$ is an induced subposet of $P^1$).

Now, recall that $x\leq b$ in $P^3$ and hence $x \in \D_{P^3}(B^3)\subseteq \D_{P^2}(B^2_{\ell})$.
Since $y\leq x$ in $P^2$ this implies $y \in \D_{P^2}(B^2_{\ell})$.
Then, from~\eqref{eq:y-points} we deduce that $y\in \Up_{P^2}(\{z_1,z_r\})$ since $y$ is in at least one of $Z^2(y_1)$ and $Z^2(y_r)$.
Therefore, $b\in \Up_{P^2}(\{z_1,z_r\})$ since $y\leq x\leq b$ in $P^2$, which concludes the proof of the claim.
\end{proof}

\subsection{Finishing the proof}
Let $B':=B^3-B^{\mathrm{trapped}}$.
By Claim~\ref{claim:trapped}, we have
\[
\dim(A^3,B^3) \leq \dim(A^3,B') + \dim(A^3,B^{\text{trapped}}) \leq \dim(A^3,B')+12h+6.
\]
It remains to show that $\dim(A^3,B')\leq 12h + 6$, which is the goal of this section.

Let $E^R$ be the set of red exit edges of elements $b\in B'$ and let $E^B$ be the set of blue exit edges of elements $b\in B'$.
We know from the previous section that $E^R$ and $E^B$ are disjoint.
Recall that the graph $G^3$ is connected. 
If, in our drawing of $G$, we contract all of $G^3$ into a single vertex $g^3$, the cyclic ordering of the edges around $g^3$ induces a cyclic ordering of the edges in $E^R \cup E^B$.  
Two remarks are in order here: First, cyclic orderings of edges around a given vertex will always be assumed to be taken in clockwise direction; second, for our purposes we must obviously keep parallel edges resulting from the contractions (loops on the other hand will not be important and can be safely removed).  
It follows from the definition of $B'$ that all edges in $E^R$ appear consecutively in the cyclic ordering, and the same is true for $E^B$.   
Each edge in $E^R$ is incident to a unique red exit point; let $\RED$ denote the set of these red exit points. 
Similarly, let $\BLUE$ be the set of blue exit points incident to edges in $E^B$. 
Let us point out that the two sets $\RED$ and $\BLUE$ are not necessarily disjoint. 
As is easily checked, every element that appears in both sets is a cutvertex of $G^3$.

For each element $v \in  \Up_{P^3}(\RED) - \RED$, choose an element $v'\in P^3$ such that there is $x\in \RED$ with $x \leq v' < v$ in $P^3$ and $v' < v$ is a cover relation in $P^3$. 
The \emph{red path} of element $v\in \Up_{P^3}(\RED)$ is $v$ itself if $v\in \RED$, or if $v\notin \RED$ then it is the path $v_0,\ldots,v_m$ such that $v_0\in \RED$, $v_m=v$ and $v_i$ is the element with $v_i < v_{i+1}$ in $P^3$ chosen for $v_{i+1}$ for each $i\in \{0, \dots, m-1\}$.  
Observe that every two red paths intersect in a (possibly empty) common prefix starting at their endpoint in $\RED$. 
Indeed, if two red paths have a vertex $v$ in common, then for both paths the section between $v$ and their endpoint in $\RED$ is the red path of $v$.  
Replacing $\RED$ with $\BLUE$ in the above definition, we similarly define \emph{blue paths} for all elements $v \in  \Up_{P^3}(\BLUE)$. 
Here also, every two blue paths intersect in a (possibly empty) common prefix starting at their endpoint in $\BLUE$.

For each $b\in B'$, we define a path $Q(b)$ in $G^3$. 
A key property of $Q(b)$ will be that all its elements are contained in the downset of $b$ in $P^3$. 
To define $Q(b)$ we first need to fix a particular element $\m(b)$ in the downset of $b$ as follows:  
Let $\m(b)$ be an arbitrarily chosen minimal element of the subposet of $P^3$ induced by $\D_{P^3}(b)\cap \Up_{P^3}(\RED)\cap \Up_{P^3}(\BLUE)$.  
Note that the latter set is not empty since it contains at least $b$.  
Next, let $\Red(b)$ denote the red path of $\m(b)$, and let $\Blue(b)$ denote the blue path of $\m(b)$.  
Let $Q(b)$ be the union of these two paths. 
Observe that $\m(b)$ is the only vertex of $Q(b)$ in $\Up_{P^3}(\RED)\cap \Up_{P^3}(\BLUE)$. 
Indeed, otherwise $\m(b)$ would not be minimal in $\D_{P^3}(b)\cap \Up_{P^3}(\RED)\cap \Up_{P^3}(\BLUE)$.  
Hence, $Q(b)$ is a path.  
Finally, let $\redv(b)$ be the endpoint of $\Red(b)$ in $\RED$ and let $\rede(b)$ denote the red exit edge of $\redv(b)$. 
Similarly, let $\bluev(b)$ be the endpoint of $\Blue(b)$ in $\BLUE$ and let $\bluee(b)$ denote the blue exit edge of $\bluev(b)$. 

\begin{figure}[t]
	\centering
	\includegraphics[scale=1.0]{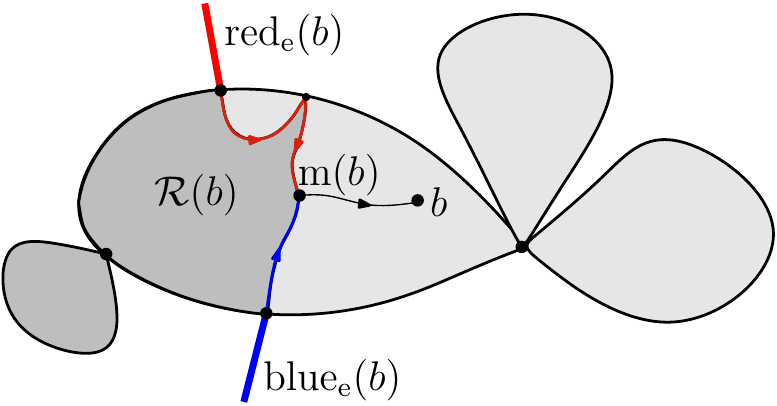}
	\caption{The red path $\Red(b)$ and the blue path $\Blue(b)$.}
	\label{fig:splitting-paths}
\end{figure}

As illustrated in Figure~\ref{fig:splitting-paths}, the path $Q(b)$ defines a corresponding region $\region(b)$ of the plane, which we define precisely now.   
Let $F(b)$ denote the (clockwise) facial trail around the boundary of the outer face of $G^3$ that starts at the first edge of the boundary after $\bluee(b)$ in the cyclic ordering around vertex $\bluev(b)$, and ends at the last edge of the boundary that is before $\rede(b)$ around vertex $\redv(b)$. 
Let then $\region(b)$ be the region of the plane obtained by taking the union of all the bounded faces of the plane graph $F(b) \cup Q(b)$, boundaries included. 
The following observation will be useful: 
\begin{align}
	\label{eq:region}
	\begin{split}
		& \text{If $u, v$ are two vertices of $G^3$ with $u$ contained in $\region(b)$ but not $v$} \\
		& \text{then the path $Q(b)$ separates $u$ from $v$ in $G^3$.} 
	\end{split}
\end{align}
To see this it suffices to observe that any path $P$ from $v$ to $u$ in $G^3$ starts in the outer face of the plane graph $F(b) \cup Q(b)$ and ends in $\region(b)$, thus the first intersection of $P$ with $\region(b)$ (seen from $v$) is a vertex on the boundary of $\region(b)$.  
This vertex must be in $Q(b)$ since all vertices in $\region(b)-Q(b)$ have all their incident edges drawn in $\region(b)$.

\begin{claim}\label{claim:ordering-of-exit-edges}
	Let $b, b' \in B'$. 
	Then the four edges $\rede(b),  \bluee(b),  \rede(b'), \bluee(b')$ 
	are ordered 
	\begin{align*}
		& \bluee(b), \bluee(b'),\rede(b'), \rede(b), & \text{if $b' \in \region(b)$,}\\
		&\bluee(b),\rede(b),\rede(b'), \bluee(b'), & \text{if $b' \not\in \region(b)$,}
	\end{align*}
	in the cyclic ordering of edges around $g^3$, where possibly $\rede(b) = \rede(b')$ or $\bluee(b) = \bluee(b')$. 
\end{claim}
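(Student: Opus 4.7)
The plan is to exploit the fact that the cyclic order of edges in $E^R \cup E^B$ around the contracted vertex $g^3$ is induced by the clockwise walk along the outer face of $G^3$, and that the exit points lying on $F(b)$ correspond precisely to those in the cyclic arc from $\bluee(b)$ to $\rede(b)$ passing through the blue-to-red transition of the two consecutive blocks. Under this correspondence, the two cases of the statement translate to the dichotomy: $\redv(b')$ and $\bluev(b')$ both lie on $F(b)$ (Case 1), versus both lie on the complementary outer-face arc (Case 2). Thus the core of the argument is to show that these alternatives correspond exactly to whether $b' \in \region(b)$.

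To establish this, I would apply a Jordan curve argument to the closed walk $F(b) \cup Q(b)$, which separates the plane into the bounded region $\region(b)$ and the unbounded complement. For any vertex of $G^3$, being on a given side of this closed walk is well defined. In particular, outer-face vertices on $F(b)$ are in $\region(b)$, while those on the complementary arc are not. The path $Q(b') = \Red(b') \cup \Blue(b')$ connects $\redv(b')$ and $\bluev(b')$ inside $G^3$, so if both endpoints lie on the same outer-face arc then $Q(b')$ is confined to the corresponding side. Using a witnessing path in $G^3$ from $\m(b')$ to $b'$, one can then transfer the side of $b'$ to that of $\m(b')$, and hence to the endpoints of $Q(b')$, provided the paths involved do not cross $Q(b)$ in problematic ways. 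Once this equivalence is in place, the cyclic order in the statement follows from the outer-face correspondence above.

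The main obstacle is that $Q(b')$ may share vertices with $Q(b)$, and similarly the $\m(b')$-to-$b'$ path may meet $Q(b)$. To handle this, I would exploit two structural facts. First, any two red paths share a common prefix from their common endpoint in $\RED$, and analogously for blue paths, so the intersection of $Q(b)$ and $Q(b')$ is highly restricted: a shared vertex forces either $\redv(b) = \redv(b')$ or $\bluev(b) = \bluev(b')$ (coincidences the claim explicitly permits), or the shared vertex lies among the $\m$'s. Second, by the minimality of $\m(b)$ in $\D_{P^3}(b) \cap \Up_{P^3}(\RED) \cap \Up_{P^3}(\BLUE)$, any vertex that lies on both a red and a blue path of $b$ and is in the downset of $b$ must coincide with $\m(b)$, so transversal intersections of $Q(b)$ with $Q(b')$ are pinned down to at most the midpoints. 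A case analysis then reduces every configuration either to the clean case in which the Jordan curve argument applies directly, or to a coincidence among exit vertices that is consistent with the stated cyclic order (with equality of exit edges allowed).
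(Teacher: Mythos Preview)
Your proposal is correct and follows essentially the same approach as the paper: a Jordan-curve separation argument using $F(b)\cup Q(b)$, reducing the position of $b'$ to that of $\m(b')$ via a witnessing path, then to $\redv(b')$ and $\bluev(b')$ via the disjointness of $\Red(b')$ and $\Blue(b')$ from $Q(b)$, with the intersections controlled by the common-prefix property of red/blue paths and the minimality of $\m(b)$. The paper organizes the endgame as four explicit subclaims (positions of $\redv(b')$, $\bluev(b')$ in the two cases) and then invokes the consecutiveness of $E^R$ in the cyclic order around $g^3$, which is exactly the ``case analysis'' you allude to.
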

\begin{proof}
	First observe that if $\m(b)=\m(b')$, then $Q(b)=Q(b')$ and hence $\rede(b)=\rede(b')$, $\bluee(b)=\bluee(b')$ so the claim holds vacuously.
	So assume for the rest of the proof that $\m(b)\neq \m(b')$.
	
	Now we show that either both $b'$ and $\m(b')$ are in $\region(b)$ or none of them are.
	Arguing by contradiction, suppose that one is in but not the other. 
	Then $Q(b)$ separates $b'$ from $\m(b')$ in $G^3$ (by \eqref{eq:region}).
	Consider a path $Q$ of $G^3$ witnessing the comparability $\m(b')\leq b'$ in $P^3$.
	Clearly, $Q(b)$ and $Q$ must intersect.
	Let $v$ be a vertex in their intersection.  
	We have $\m(b')\leq v \leq \m(b)$ in $P^3$.
	Recall that each of $\m(b')$ and $\m(b)$ is in both the upset of $\RED$ and the upset of $\BLUE$, and is minimal with this property in $P^3$.
	It follows that $\m(b')=\m(b)$, contradicting our assumption.	
	Therefore, either both $b'$ and $\m(b')$ are in $\region(b)$, or none of them are, as claimed. 
	
	Next, we study the positions of vertices $\redv(b')$ and $\bluev(b')$ depending on the position of $\m(b')$: 
	\begin{enumerate}
		\item If $\m(b')\in\region(b)$ then either $\rede(b)=\rede(b')$ or $\redv(b')$ appears on $F(b)$ but not in $Q(b)$.\label{item:redv-in}
		\item If $\m(b')\in\region(b)$ then either $\bluee(b)=\bluee(b')$ or $\bluev(b')$  appears on $F(b)$ but not in $Q(b)$.\label{item:bluev-in} 
		\item If $\m(b')\not\in\region(b)$ then either $\rede(b)=\rede(b')$ or $\redv(b')$ does not appear on $F(b)$.\label{item:redv-out}
		\item If $\m(b')\not\in\region(b)$ then either $\bluee(b)=\bluee(b')$ or $\bluev(b')$ does not appear on $F(b)$. \label{item:bluev-out}
	\end{enumerate}
	
	We prove \ref{item:redv-in} and \ref{item:redv-out}, properties \ref{item:bluev-in} and \ref{item:bluev-out} follow by a symmetric argument (exchanging red and blue).
	Suppose that $\rede(b)\neq\rede(b')$.
	We first show that $\Red(b')$ and $Q(b)$ are disjoint in this case. 
	If $\Red(b')$ intersects $\Red(b)$ then these two paths have a non-empty common prefix starting from an element in $X'$, thus in particular $\redv(b)=\redv(b')$ and $\rede(b)=\rede(b')$, contrary to our assumption.
	On the other hand, if $\Red(b')$ intersects $\Blue(b)-\m(b)$, then letting $w$ be an element in the intersection we have $\redv(b') \leq w$ and $\bluev(b) \leq w < \m(b)$ in $P^3$, contradicting the minimality of $\m(b)$.
	Thus, $\Red(b')$ and $Q(b)$ are disjoint, as claimed.  
	
	If $\m(b')\in\region(b)$ then by \eqref{eq:region} the whole path $\Red(b')$ is in $\region(b)$ (here we use that $\Red(b')$ and $Q(b)$ are disjoint).
	Since the other endpoint of $\Red(b')$, namely $\redv(b')$, lies on the outer face of $G^3$, it must appear on $F(b)$.
	This shows~\ref{item:redv-in}. 
	
	If $\m(b')\not\in\region(b)$ then by \eqref{eq:region} the whole path $\Red(b')$ lies outside of $\region(b)$ (using again that $\Red(b')$ and $Q(b)$ are disjoint). 
	In particular, $\redv(b')$ does not appear on $F(b)$.  
	This shows~\ref{item:redv-out}. 
	
	Now that properties~\ref{item:redv-in}-\ref{item:bluev-out} have been established, we can proceed with the proof of the claim.  
	First let us consider the case that $b'\in\region(b)$. 	
	Suppose that $\rede(b) \neq \rede(b')$.
	We want to show that the edges $\bluee(b), \rede(b'), \rede(b)$ appear in this order in the cyclic ordering around $g^3$ clockwise.
	We already saw that $b'\in\region(b)$ implies $\m(b')\in\region(b)$.
	Since $\rede(b) \neq \rede(b')$ is assumed,  $\redv(b')$ appears on $F(b)$ but not in $Q(b)$ by~\ref{item:redv-in}. 
	Let $e, f$ be the two edges incident to $\redv(b')$ such that the edges $e, \rede(b'), f$ appear in this order in the ordering around $\redv(b')$ in $G$, and such that $e, f$ are consecutive edges of the boundary of $G^3$ (i.e.\ if we start a facial trail on the outer face at edge $e$, then $f$ is the next edge). 
	Since $Q(b)$ separates vertices in $\region(b)$ from vertices not in $\region(b)$, and since $\redv(b')$ is not in $Q(b)$, we deduce that $e, f$ appear in $F(b)$ (consecutively). 
	By the definition of $F(b)$, this implies that $\bluee(b), \rede(b'),\rede(b)$ appear in this order in the ordering of edges around $g^3$. 
	(One way to see this is as follows: One could start contracting $G^3$ by first contracting $F(b)$ onto vertex $\redv(b')$; then $\bluee(b), \rede(b'),\rede(b)$ appear in this order around  $\redv(b')$, and this ordering remains the same when contracting the rest of $G^3$.)   
	
	A symmetric argument shows that if $b'\in\region(b)$ and $\bluee(b) \neq \bluee(b')$ then $\bluee(b), \bluee(b'),\rede(b)$ appear in this order in the ordering of edges around $g^3$. 
	Now, if $b'\in\region(b)$, $\rede(b) \neq \rede(b')$ and $\bluee(b) \neq \bluee(b')$, then combining these observations with the fact that edges in $E^R$ appear consecutively in the ordering of $E^R \cup E^B$ around $g^3$, we deduce that the only possibility is that $\bluee(b), \bluee(b'),\rede(b'), \rede(b)$ appear in this order in the cyclic ordering. 
	This conclude the proof in the case that $b'\in\region(b)$. 
	
	Next, let us assume that  $b'\notin\region(b)$. 
	Then $\m(b')\notin\region(b)$. 
	If $\rede(b) \neq \rede(b')$, then by~\ref{item:redv-out} and the definition of $F(b)$, we see that $\bluee(b), \rede(b), \rede(b')$ appear in this order around $g^3$. 
	Similarly, if $\bluee(b) \neq \bluee(b')$, then $\bluee(b), \rede(b), \bluee(b')$ appear in this order around $g^3$ by~\ref{item:bluev-out}. 
	The claimed ordering on the edges $\bluee(b),\rede(b),\rede(b'), \bluee(b')$ already follows from these observations in case $\rede(b) = \rede(b')$ or $\bluee(b) = \bluee(b')$. 
	Thus, it only remains to consider the case that $\rede(b) \neq \rede(b')$ and $\bluee(b) \neq \bluee(b')$. 
	Using the previous two observations and the fact that edges in $E^R$ appear consecutively in the ordering of $E^R \cup E^B$ around $g^3$, we deduce that the only possibility is that $\bluee(b),\rede(b),\rede(b'), \bluee(b')$ appear in this order in the cyclic ordering. 
\end{proof}

Thanks to Claim~\ref{claim:ordering-of-exit-edges}, we may weakly order the elements of $B'$ as follows. 
For $b,b'\in B'$ let us write $b \prec b'$ if the edges $\bluee(b), \rede(b),  \rede(b'), \bluee(b')$ are ordered this way around $g^3$ and $|\set{\bluee(b), \rede(b),  \rede(b'), \bluee(b')}|\geq3$.
By Claim~\ref{claim:ordering-of-exit-edges} we know that
\begin{equation}\label{eq:prec-ordering}
b \prec b' \quad\text{or}\quad b'\prec b \quad\text{or}\quad (\rede(b)=\rede(b') \text{ and } \bluee(b)=\bluee(b')),
\end{equation}
for all $b,b'\in B'$.

We are now ready to partition $\Inc(A^3,B')$ into $12h+6$ of reversible sets. 
First we partition $\Inc(A^3,B')$ into $I^{\operatorname{in}}$ and $I^{\operatorname{out}}$. 
The set $I^{\operatorname{in}}$ contains the pairs $(a,b)\in \Inc(A^3,B')$ with $a$ drawn in the region $\region(b)$, while $I^{\operatorname{out}}$ contains the remaining ones. 

\begin{claim}
	$I^{\operatorname{in}}$ and $I^{\operatorname{out}}$ can each be partitioned into at most $6h+3$ reversible sets.
\end{claim}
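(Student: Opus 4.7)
The bound $6h+3$ for each set matches exactly the statement of Lemma~\ref{lemma:point-below-max}, so the strategy is, for each of $I^{\operatorname{in}}$ and $I^{\operatorname{out}}$ separately, to build an auxiliary planar poset $R$ of height at most $h$ together with a distinguished element $x_0$ such that $x_0 \leq b$ in $R$ for every $b \in B'$, and such that the corresponding set of incomparable pairs is contained in $\Inc(\Min(R), B')$. Applying Lemma~\ref{lemma:point-below-max} to $(R, B', x_0)$ then immediately produces the desired partition.

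For $I^{\operatorname{out}}$ I would work essentially inside $P^3$, enlarged by a single new element $x_0^{\operatorname{out}}$ drawn in the outer face of the (induced) planar drawing of $G^3$. Since every $b\in B'$ has $\redv(b)\le b$ in $P^3$ and $\redv(b)$ lies on the outer boundary of $G^3$, I can add a cover edge from $x_0^{\operatorname{out}}$ up to each $\redv(b)$ and route all these edges disjointly in the outer face; planarity of this routing follows from the cyclic order of the red exit edges around $g^3$ and the fact that they appear consecutively, as used in Claim~\ref{claim:ordering-of-exit-edges}. A pair $(a,b)\in I^{\operatorname{out}}$ stays incomparable because the new cover edges only introduce relations $x_0^{\operatorname{out}} \le \redv(b)\le \cdots$, none of which involve $a$; and $a$ remains a minimal element of $R^{\operatorname{out}}$ since we attach no downward edges to any element of $A^3$. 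To avoid raising the height, I would identify $x_0^{\operatorname{out}}$ with an appropriate already-existing minimum on the outer face of $G^3$ (such as the lowermost vertex of one of the chains $\Red(b)$ or $\Blue(b)$ used to reach $B'$), so that $R^{\operatorname{out}}$ has height at most $h$.

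For $I^{\operatorname{in}}$ the construction is symmetric in spirit, but the element $x_0^{\operatorname{in}}$ must be placed \emph{inside} the arrangement of regions $\region(b)$. The key enabling fact is again Claim~\ref{claim:ordering-of-exit-edges}, which says that the family of regions $\set{\region(b)\mid b\in B'}$ is essentially laminar: the regions are nested according to the cyclic order of the exit edges around $g^3$. This nesting allows me to locate a point inside the ``innermost'' part of the arrangement from which I can route a planar family of cover edges up to the paths $Q(b)$ (each $Q(b)$ contains $\m(b)\le b$) without crossing any already-drawn edges of $G^3$. As in the outer case I identify $x_0^{\operatorname{in}}$ with an existing element on one of the $Q(b)$'s, keeping the height at $h$, and I check that no pair of $I^{\operatorname{in}}$ becomes comparable and no element of $A^3$ loses its minimality.

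The main obstacle is realizing the planar routing while preserving both the height and the incomparable pairs. For $I^{\operatorname{in}}$ this is the more delicate of the two constructions, since we must thread a common point through the nested regions $\region(b)$ and choose the cover edges so that planarity is maintained and no new chain of length $h+1$ is created; the nesting structure provided by Claim~\ref{claim:ordering-of-exit-edges}, together with the fact that each $\m(b)$ sits on the boundary $Q(b)$ of $\region(b)$, is precisely what makes this routing possible.
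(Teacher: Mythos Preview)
Your plan has a structural gap: you are trying to manufacture a \emph{single} element $x_0$ with $x_0\le b$ for every $b\in B'$, inside a planar diagram of height $\le h$. If such an $x_0$ could be produced (for either the ``in'' or the ``out'' side), then Lemma~\ref{lemma:point-below-max} would bound all of $\dim(A^3,B')$ by $6h+3$ in one shot, and the split into $I^{\operatorname{in}}$ and $I^{\operatorname{out}}$ would be pointless. The split is there precisely because no such common $x_0$ exists in general. Two concrete failures of your plan: (a) the regions $\region(b)$ are \emph{not} laminar---Claim~\ref{claim:ordering-of-exit-edges} also allows the ``side by side'' configuration $\bluee(b),\rede(b),\rede(b'),\bluee(b')$, so there is no ``innermost'' point through which to thread $x_0^{\operatorname{in}}$; (b) Lemma~\ref{lemma:point-below-max} needs a planar \emph{diagram}, so your new cover edges from $x_0^{\operatorname{out}}$ to the various $\redv(b)$ must be $y$-increasing curves, and routing all of these through the outer face of $G^3$ without crossing the existing diagram (and without raising the height) is not something the cyclic order of exit edges gives you.

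The paper's argument is quite different in shape. It does \emph{not} look for one common $x_0$; instead it applies Lemma~\ref{lemma:point-below-max} many times, once for each red exit point $x$, to the set $B_x=\{b\in B':\redv(b)=x\}$ (here $x$ itself plays the role of $x_0$, so no new element is added and height and planarity are automatic). This gives, for each $x$, a partition $I_x^1,\dots,I_x^{6h+3}$ of the pairs in $I^{\operatorname{in}}$ with $b\in B_x$. The real content of the claim is the \emph{merging step}: one sets $I^i:=\bigcup_x I_x^i$ and shows each $I^i$ is still reversible. This is where the in/out split and Claim~\ref{claim:ordering-of-exit-edges} are actually used: if $(a_1,b_1),\dots,(a_k,b_k)$ were an alternating cycle in $I^i\subseteq I^{\operatorname{in}}$, then each $a_j\in\region(b_j)$ forces $b_{j+1}\in\region(b_j)$ (any witnessing path from $a_j$ up to $b_{j+1}$ cannot cross $Q(b_j)$), whence by Claim~\ref{claim:ordering-of-exit-edges} either $b_{j+1}\prec b_j$ or $\rede(b_{j+1})=\rede(b_j)$. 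Going around the cycle collapses all the $\redv(b_j)$ to a single $x$, so the cycle lies inside the reversible set $I_x^i$, a contradiction. The argument for $I^{\operatorname{out}}$ is symmetric using blue exit points. Your proposal misses this merge-across-exit-points idea, which is the heart of the proof.
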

\begin{proof}
	For each $x\in P^3$ let $B_x:=\set{b\in B'\mid \redv(b)=x}$ and $I_x:=\set{(a,b)\in I^{\operatorname{in}} \mid b\in B_x}$.
	Since $B_x\subseteq \Up_{P^3}(x)$ we may apply our main lemma, Lemma~\ref{lemma:point-below-max}, on $P^3$ with the set $B_x$ and element $x$ to obtain a partition of $I_x$ into at most $6h+3$ reversible sets. 
	Let $I_x^1, \dots, I_x^{6h+3}$ denote such a partition (with possibly some empty sets). 
	Next, for each $i\in \{1, \dots, 6h+3\}$ let $I^i:= \bigcup_{b \in B'} I_{\redv(b)}^i$. 
	Observe that $I^1, \dots, I^{6h+3}$ is a partition of $I^{\operatorname{in}}$ (again, with possibly some empty sets).  
	We claim that each set in this partition is reversible. 
	
	Arguing by contradiction, suppose that  $I^{i}$  is not reversible for some $i\in \{1, \dots, 6h+3\}$. 
	Then there is a sequence of pairs $(a_1,b_1),\dots,(a_k,b_k)$ in $I^{i}$ forming an alternating cycle in $P^3$.  
	Consider an index $j\in \set{1,\ldots,k}$ and let $Q$ be a path witnessing the relation $a_j\leq b_{j+1}$ in $P^3$ (indices are taken cyclically, as always).
	This path cannot intersect $Q(b_j)$ since otherwise an element $v$ in the intersection would witness $a_j \leq v \leq \m(b_j) \leq b_j$ in $P^3$, which cannot be.
	Since $a_j\in\region(b_j)$ and since $Q(b_j)$ separates vertices in $\region(b_j)$ from vertices outside $\region(b_j)$, we conclude that $b_{j+1}\in\region(b_j)$.
	Now, by Claim~\ref{claim:ordering-of-exit-edges}, 
	\[
	b_{j+1} \prec b_j\quad\text{or}\quad (\rede(b_j)=\rede(b_{j+1}) \text{ and } \bluee(b_j)=\bluee(b_{j+1})).
	\]
	Since the above conclusion holds for every $j\in\set{1,\ldots,k}$, we end up with the only possibility that
	\[
	\rede(b_1)=\cdots=\rede(b_k)\quad\text{and}\quad \bluee(b_1)=\cdots=\bluee(b_k).
	\]
	In particular, $\redv(b_1)=\cdots=\redv(b_k)$. 
	Hence,  $(a_1,b_1), \dots, (a_k,b_k) \in I_{\redv(b_1)}^i$. 
	However, this contradicts the fact that $I_{\redv(b_1)}^i$ is reversible. 
	
	The proof that $I^{\operatorname{out}}$ can be partitioned into at most $6h+3$ reversible sets is symmetric.   
\end{proof}

We conclude that $\Inc(A^3,B')$ can be partitioned into at most $12h+6$ reversible sets, as claimed at the beginning of this section.  
Putting everything together, we deduce that 
\[
\dim(P)\leq 8 \dim(A^3,B^3)\leq 8 (24h+12)=192h+96, 
\]
which concludes the proof of our theorem.

\section{New constructions}\label{sec:lower-bounds}
As mentioned in the introduction, the original construction of Kelly~\cite{Kel81} shows that planar posets of height $h$ can have dimension at least $h+1$. 
It was even suggested in~\cite{ST14} (perhaps provocatively) that there might be some constant $c$ such that the dimension of planar posets is at most $h+c$.
Theorem~\ref{thm:lower-bound-1} shows that such a constant cannot exist. 
To prove Theorem~\ref{thm:lower-bound-1}, we prove the following statement that clearly implies it. 

\begin{figure}[t]
 \centering
 \includegraphics[scale=1.0]{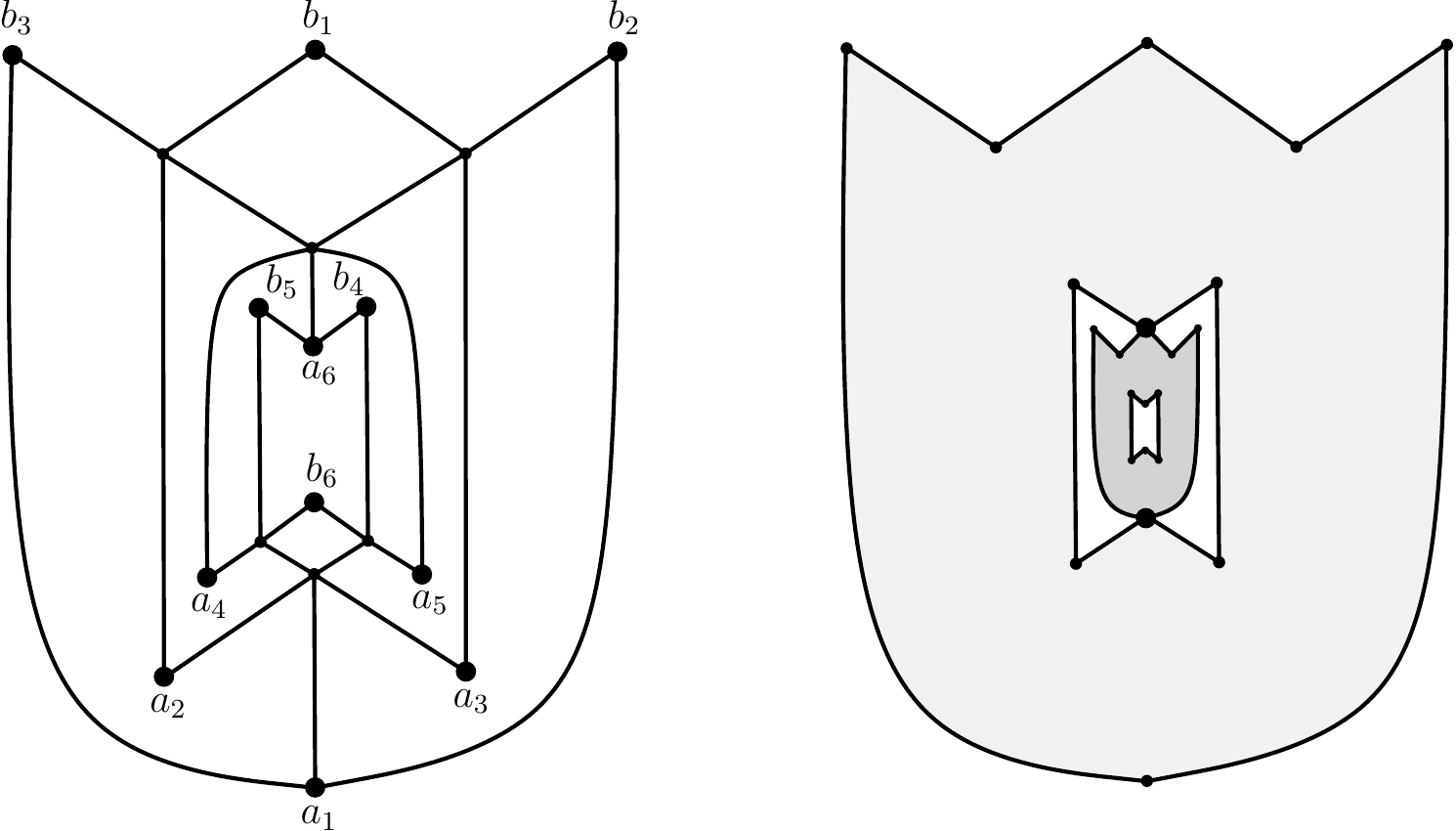}
 \caption{Iterative construction of planar posets with arbitrarily large dimension.}
 \label{fig:lower-bound}
\end{figure}

\begin{theorem}
	For every $h\geq 1$ with $h\equiv 1\mod 3$, there is a planar poset $P$ of height $h$ with
	\[
	\dim(P) \geq (4/3)h+2/3.
	\]
\end{theorem}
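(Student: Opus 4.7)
The plan is to establish the theorem by an explicit inductive construction of planar posets $P_k$, for $k \geq 0$, such that $P_k$ has height $3k+1$ and contains the standard example $S_{4k+2}$ as an induced subposet. Since $\dim(S_n)=n$ for $n\geq 2$ and dimension is monotone under induced subposets, this yields $\dim(P_k)\geq 4k+2=(4/3)(3k+1)+2/3$, proving the theorem for all $h\equiv 1\pmod 3$.

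For the base case, $P_0$ is the two-element antichain, which is $S_2$ itself and has height $1$. For the inductive step, following Figure~\ref{fig:lower-bound}, I would obtain $P_{k+1}$ from $P_k$ by attaching a three-level planar ``gadget'' around the outer face of the planar diagram of $P_k$. The gadget introduces four new minimal elements $a^{k+1}_1,\ldots,a^{k+1}_4$ drawn below the diagram of $P_k$, four new maximal elements $b^{k+1}_1,\ldots,b^{k+1}_4$ drawn above it, and a small number of intermediate elements on a middle level; the added cover edges are routed so that (i) each new minimal element connects upward (through the gadget and through $P_k$) to every new maximal element with a different index, (ii) each old distinguished minimal element of the standard example of $P_k$ reaches every new maximal element, and dually for new minima and old maxima, while (iii) the four new partner pairs $(a^{k+1}_s,b^{k+1}_s)$ remain incomparable in $P_{k+1}$.

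The proof then breaks into three steps: a precise specification of the gadget (vertices, cover relations, and a plane embedding extending that of $P_k$); verification of planarity (immediate once the drawing is given, since the gadget is attached along the outer face and its internal edges can be drawn without crossings); and verification that the $4(k+1)+2$ distinguished minimal and maximal elements form a standard example in $P_{k+1}$. The main obstacle is the last point: in particular, showing that the four new partner pairs remain incomparable within the three-level gadget while all cross-pairs $(a^{k+1}_s,b^{k+1}_t)$ with $s\neq t$ become comparable. Packing four standard-example pairs into only three new levels is what forces the $(4/3)$ ratio and improves on Kelly's construction, which adds one pair per level. The required non-comparabilities would be established by a case analysis of all potential witnessing paths in $\cover(P_{k+1})$: every candidate path $a^{k+1}_s \leq b^{k+1}_s$ must either be blocked by the planar topology of the gadget (paths forced to cross would yield contradictions) or factor through a forbidden comparability in $P_k$, which the inductive hypothesis rules out. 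Once the standard example is exhibited, the height count $3(k+1)+1$ is a straightforward consequence of the gadget adding exactly three new levels, and the bound on $\dim(P_{k+1})$ follows immediately.
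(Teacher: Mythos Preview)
Your proposal is a plan rather than a proof, and it diverges from what the paper actually does.  The paper does not wrap a gadget around the \emph{outside} of $P_k$; instead it starts from a single explicit height-$4$ planar poset $P$ containing $S_6$ (the left picture in Figure~\ref{fig:lower-bound}) with the feature that the maximal element $b_6$ is drawn \emph{below} the minimal element $a_6$, leaving a pocket of free space in the diagram.  The inductive step inserts a scaled copy $P'$ of $P$ into that pocket, identifying $a_1$ of $P'$ with $b_6$ of $P$ and $b_1$ of $P'$ with $a_6$ of $P$.  Each insertion raises the height by $3$ and the order of the embedded standard example by $4$.  The whole argument is essentially a picture: once the base poset is drawn and checked to contain $S_6$, planarity and the standard-example property are read off directly.

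Your outline, by contrast, never specifies the three-level gadget, so none of the three verification steps you list can be carried out.  The real difficulty you gloss over is not merely that the four new partner pairs must stay incomparable \emph{inside the gadget}: since you require every new minimum $a^{k+1}_s$ to lie below all old maxima and every new maximum $b^{k+1}_s$ to lie above all old minima, you must also rule out comparabilities $a^{k+1}_s \leq b^{k+1}_s$ that route through $P_k$, and you must check that the gadget does not create new relations $a^j_i \leq b^j_i$ between old partner pairs by providing a detour outside $P_k$.  Without an explicit construction these cannot be checked, and invoking Figure~\ref{fig:lower-bound} does not help, since that figure depicts the paper's \emph{insertion} construction, not an outer-face attachment.
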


\begin{proof}
 If $h=1$ then it suffices to take an antichain of size at least $2$, which has dimension $2$. 
 For larger heights $h$ with $h\equiv 1\mod 3$, we give an inductive construction that contains the standard example of size $(4/3)h+2/3$ as an induced subposet, thus showing that the dimension is at least $(4/3)h+2/3$.
 For the base case $h=4$, start with the planar poset $P$ in Figure~\ref{fig:lower-bound} on the left.
 With the provided labeling it is easy to see that it contains the standard example $S_6$ as an induced subposet.

Next, using this base case, we describe how to build the desired poset for height $h=7$.
Observe that in the diagram of $P$ the maximal element $b_6$ is drawn below the minimal element $a_6$, and that there is some free space between them.  
Thus, we can take a small copy $P'$ of $P$ and insert $P'$ in that space. 
 We do it in such a way that the element $a_1$ of $P'$ is identified with the element $b_6$ of $P$, and the element $b_1$ of $P'$ is identified with element $a_6$ of $P$, see the right of Figure~\ref{fig:lower-bound} for an illustration. 
 The resulting poset has height $7$ and the labeled elements of $P$ and $P'$, except for the ones that we identified, form a standard example of order $10$.

 It is not hard see that this copy-pasting procedure gives rise to an iterative construction, where in each step we increase the height by $3$ and the order of the standard example under consideration by $4$, as desired.  
\end{proof}

Next, we show that there are posets of height $h$ with planar cover graphs and dimension at least $2h-2$, as stated in Theorem~\ref{thm:lower-bound-2}. 

\begin{proof}[Proof of Theorem~\ref{thm:lower-bound-2}]
 Consider the construction of a poset with a planar cover graph illustrated in Figure~\ref{fig:spidernet}.
 Directed edges indicate the precise cover relations. 
 Undirected edges are part of one of the two induced subposets that resemble a spider net (c.f.~Figure~\ref{fig:trotter-spider} in the introduction). 
 These edges are oriented away from the centers of the spider nets.
 The resulting poset has height $6$ and contains a standard example of order $10$ as indicated by the labeled elements.
 Clearly, we can extend this example in such a way that the poset has height $h$ and contains a standard example of order $2h-2$, and hence has dimension at least $2h-2$.
\end{proof}

\begin{figure}[t]
 \centering
 \includegraphics[scale=1.3]{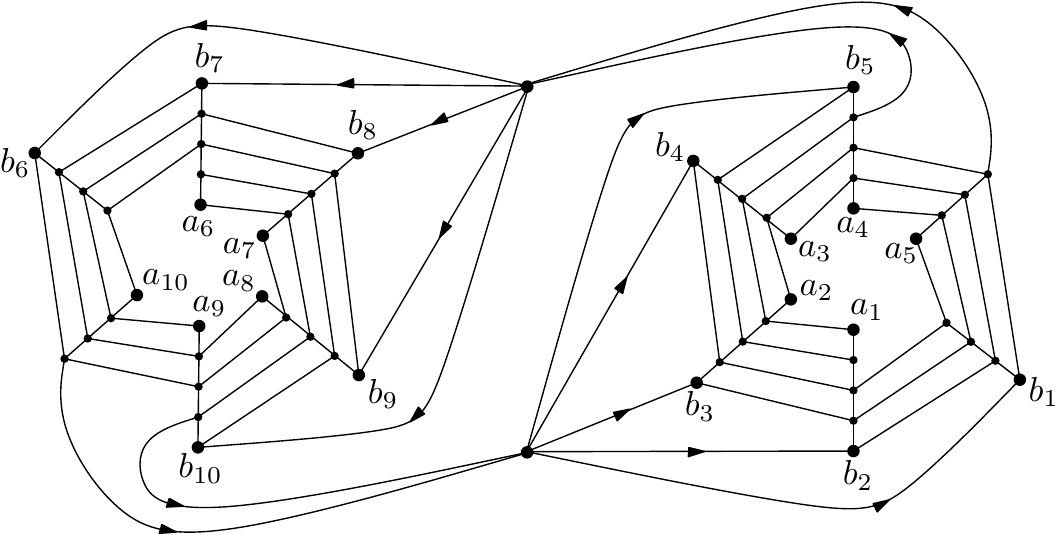}
 \caption{Construction of posets with planar cover graphs and large dimension.}
 \label{fig:spidernet}
\end{figure}

We conclude the paper with the proof of Theorem~\ref{thm:lower-bound-K5}, asserting the existence of posets of height $h$ and dimension at least $2^{h/2}$ whose cover graphs are $K_5$-minor free. 
In fact, we prove something slightly stronger: There are posets  of height $h$ and dimension at least $2^{h/2}$ whose cover graphs have treewidth at most $3$. 
Let us recall first the necessary definitions: 
A {\em tree decomposition} of a graph $G$ consists of a tree $T$ and a collection of non-empty subtrees $T_v$ ($v \in V(G)$) of $T$ such that $T_v$ and $T_w$ intersect for all $vw\in E(G)$. 
The {\em width} of such a decomposition is $\max_{x\in V(T)} |\{v \in V(G): x\in V(T_v)\}| -1$. 
The {\em treewidth} of $G$ is the minimum width of a tree decomposition of $G$. Since graphs with treewidth $t$ have no $K_{t+2}$ minors, the following theorem implies Theorem~\ref{thm:lower-bound-K5}. 

\begin{theorem}\label{thm:lower-bound-tw}
	For each even $h\geq 2$, there is a poset of height $h$ with dimension at least $2^{h/2}$ whose cover graph has treewidth at most $3$. 
\end{theorem}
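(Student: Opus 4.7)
The plan is to construct, for each $n \ge 1$, a poset $P_n$ of height exactly $2n$, dimension at least $2^n$, and cover graph of treewidth at most $3$. Since $h = 2n$, this yields $\dim(P_n) \ge 2^{h/2}$, and since graphs of treewidth at most $3$ are $K_5$-minor-free, Theorem~\ref{thm:lower-bound-K5} follows as an immediate corollary of Theorem~\ref{thm:lower-bound-tw}. I will maintain the inductive invariant that $P_n$ contains an induced standard example $S_{2^n}$, with distinguished minimal elements $a^n_S$ and maximal elements $b^n_S$ indexed by $S \in \{0,1\}^n$.

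The base case $P_1 = S_2$ is immediate: two minimal elements $a_0, a_1$, two maximal elements $b_0, b_1$, cover relations $a_0 < b_1$ and $a_1 < b_0$. This has height $2$, dimension $2$, and cover graph consisting of two disjoint edges (treewidth $1$). For the inductive step, I take two disjoint copies $P_n^L$ and $P_n^R$ of $P_n$, and attach a small \emph{bridge gadget} of $O(1)$ intermediate elements. The bridge is designed so that in the combined poset $P_{n+1}$, every minimum $a^L_S$ of the left copy is forced below every maximum $b^R_T$ of the right copy via a chain passing through the bridge, and symmetrically for the other direction. This produces an induced $S_{2^{n+1}}$ on the minimal elements $\{a^L_S, a^R_S\}_S$ and maximal elements $\{b^L_S, b^R_S\}_S$: the only incomparable min--max pairs among these are the "diagonal" pairs $(a^L_S, b^L_S)$ and $(a^R_S, b^R_S)$, inherited from the two copies of $S_{2^n}$.

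Verifying the three required properties of $P_{n+1}$: (a) the longest chain is the concatenation of a chain inside $P_n^L$ or $P_n^R$ with a short traversal of the bridge, and by choosing the gadget to add exactly two levels to the longest internal chain I ensure the height equals $2n + 2$; (b) the embedded $S_{2^{n+1}}$ witnesses $\dim(P_{n+1}) \ge 2^{n+1}$; (c) a tree decomposition of width at most $3$ for the cover graph of $P_{n+1}$ is assembled from width-$3$ tree decompositions of $P_n^L$ and $P_n^R$ together with a linear sequence of bridge bags, each of size at most $4$. The bridge bags are inserted so that the subtrees for the various minimum and maximum elements of both copies that interact with the bridge remain connected.

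The main obstacle is designing the bridge so that step (c) actually works. Each bridge element must mediate comparabilities between $\Theta(2^n)$ min--max pairs, so cannot simply be adjacent to all of them without destroying treewidth $3$. The key idea is to route the required cross-relations through a chain of bridge elements and to exploit the fact that the boundary of each $P_n$ copy -- specifically, the set of vertices in its cover graph whose adjacencies extend to the bridge -- can be covered by a small path in the existing tree decomposition. By aligning the bridge with this path, each bridge bag needs only a constant number of vertices from each side, and the stitched decomposition stays of width $3$. This careful interfacing between the inductively constructed tree decompositions and the new bridge is the technical heart of the argument.
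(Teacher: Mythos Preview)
Your proposal has a genuine gap: the bridge gadget, which you yourself call ``the technical heart of the argument,'' is never actually constructed. To force $a^L_S < b^R_T$ for all $2^n \times 2^n$ pairs through a bridge of $O(1)$ elements, some bridge element $x$ must satisfy $a^L_S \le x$ for every $S$; since the bridge attaches to $P_n^L$ at only $O(1)$ vertices, this requires an element of $P_n^L$ lying above all minima $a^L_S$ (and dually one below all maxima). Your $P_n$, described only as a poset containing an induced $S_{2^n}$, has no reason to contain such apex elements, and you provide no inductive invariant supplying them. Your height analysis is also incorrect as stated: you write that the longest chain in $P_{n+1}$ is ``a chain inside $P_n^L$ or $P_n^R$ with a short traversal of the bridge,'' but the chains that matter pass through \emph{both} copies (from some $a^L_S$ up through the bridge and on to some $b^R_T$), and bounding their length by exactly $2n+2$ requires tight control over how deep the bridge attachment points sit inside each copy---information you do not maintain.

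The paper's argument is quite different and sidesteps all of this. Rather than taking two copies, it keeps a \emph{single} copy of $P_{h-2}$ and, for each of the $2^{(h-2)/2}$ incomparable min--max pairs $(a_i,b_i)$, attaches a local four-element gadget $a_i^1,a_i^2,b_i^1,b_i^2$ with $a_i^j < a_i$, $b_i < b_i^j$, and $a_i^1<b_i^2$, $a_i^2<b_i^1$. Each such gadget turns one incomparable pair into two, doubling the order of the embedded standard example. The treewidth bound is maintained via the inductive invariant that for every incomparable pair $(a_i,b_i)$ the subtrees $T_{a_i}$ and $T_{b_i}$ in the width-$3$ tree decomposition intersect: the gadget is simply hung off a node where they meet, adding three new bags of size at most four. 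Because the modification is purely local, both the height-increase-by-two and the treewidth-at-most-three checks become immediate.
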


\begin{proof}
For each even $h \geq 2$ we construct, by induction on $h$, a poset $P_{h}$ of height $h$ whose cover graph has treewidth at most $3$ such that $\Min(P_h) \cup \Max(P_h)$ induces a standard example of order $2^{h/2}=:c_h$.  

For the base case of the induction ($h=2$), it suffices to let $P_2$ be the standard example $S_2$.   
For the inductive step ($h \geq 4$), let $\Min(P_{h-2})=\{a_1, \dots, a_{c_{h-2}}\}$ and $\Max(P_{h-2})=\{b_1, \dots, b_{c_{h-2}}\}$ in such a way that $a_i$ and $b_i$ are incomparable in $P_{h-2}$ for each $i\in \{1, \dots, c_{h-2}\}$.  
Then, the poset $P_{h}$ is obtained from $P_{h-2}$ by introducing for each $i$ four new elements $a^1_i,a^2_i,b^1_i,b^2_i$ forming a standard example of order $2$ (with relations $a^1_i < b^2_i$ and $a^2_i < b^1_i$), and adding the relations $a^1_i < a_i$ and $a^2_i < a_i$, and $b_i < b^1_i$ and $b_i < b^2_i$ (and all relations implied by transitivity). 
The construction is sketched in Figure~\ref{fig:height-tw-ex}.  
Observe that 
\begin{align*}
\Min(P_h) &= \{a^1_1, a^2_1, \dots, a^1_{c_{h-2}}, a^2_{c_{h-2}}\} \\
\Max(P_h) &= \{b^1_1, b^2_1, \dots, b^1_{c_{h-2}}, b^2_{c_{h-2}}\}
\end{align*}
and that these two sets together induce a standard example of order $2c_{h-2}=2^{h/2}=c_h$. 
Moreover, the height of $P_{h}$ is that of $P_{h-2}$ plus $2$, and thus $P_{h}$ has height $h$.

Finally, we show that the cover graph of $P_h$ has treewidth at most $3$. 
This is done by induction on $h$ again. 
To aid the induction, we prove that there exists a tree decomposition of width $3$ with the extra property that, for each incomparable pair $(a, b)$ with $a\in \Min(P_h)$ and $b\in \Max(P_h)$, the subtrees associated to $a$ and $b$ intersect. 

Clearly, this can be done for $h=2$, so let us consider the inductive case ($h \geq 4$). 
Using the same notations as above for the minimal and maximal elements of $P_{h-2}$ and of $P_h$, we extend the tree decomposition $(T, \{T_v\}_{v \in P_{h-2}})$ of the cover graph of $P_{h-2}$ that we obtain by induction as follows: 
For each $i\in \{1, \dots, c_{h-2}\}$, add three new nodes $x_i, y_i, z_i$ to the tree $T$ and the edges $x_iy_i, x_iz_i$, and $x_ir_i$ where $r_i$ is a node of $T$ where the subtrees $T_{a_i}$ and $T_{b_i}$ meet. 
Next, extend the subtree $T_{a_i}$ so that it contains the three new nodes, and extend $T_{b_i}$ with the node $x_i$ only. 
Then, define the new subtrees $T_{a^1_i},T_{a^2_i},T_{b^1_i},T_{b^2_i}$ to be the subtrees induced by $\{y_i\}, \{z_i\}, \{x_i, y_i, z_i\}, \{x_i, y_i, z_i\}$, respectively. 
It is easy to check that the width of the new tree decomposition is still $3$, and moreover that it has the extra property that $T_{a^j_i}$ and $T_{b^j_i}$ intersect for $j=1,2$, as desired.     
\end{proof}

Contrasting with the above theorem, we note that posets with cover graphs of treewidth at most $2$ have dimension bounded from above by an absolute constant (at most $1276$, see~\cite{JMTWW}).

\begin{figure}[t]
 \centering
 \includegraphics[width=0.5\textwidth]{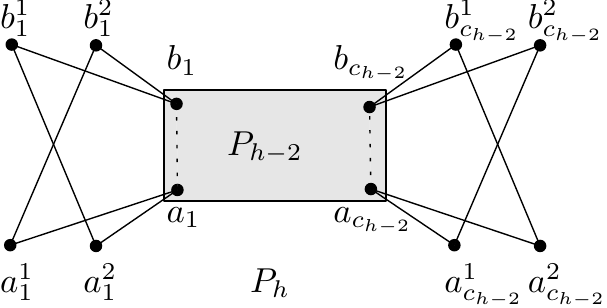}
 \caption{Inductive construction of $P_{h}$.}
 \label{fig:height-tw-ex}
\end{figure}

\section*{Acknowledgements} 
We thank the two anonymous referees for the time and efforts they put in reading this long paper.  
We are also grateful to Micha{\l} Seweryn for his careful reading and his insightful remarks. 

 \bibliographystyle{plain}
 \bibliography{posets-dimension}

\end{document}